\newcounter{mtheorem}
\newtheorem{theorem}{Theorem}[section]
\newtheorem{lemma}[theorem]{Lemma}
\newtheorem{prop}[theorem]{Proposition}
\theoremstyle{definition}
\newtheorem{definition}[theorem]{Definition}
\newtheorem{example}[theorem]{Example}
\theoremstyle{remark}
\newtheorem{remark}[theorem]{Remark}
\numberwithin{equation}{section}
\newcommand{\id}{{ \bf id}}
\newcommand{\ubar}[1]{\underaccent{\bar}{#1}} 
\def\({\begin{eqnarray}}
\def\){\end{eqnarray}}
\def\[{\begin{eqnarray*}}
\def\]{\end{eqnarray*}}
\def\endproof{\ \hfill QED. \bigskip}
\title{An Optimal Transport Approach for the Kinetic Bohmian Equation}
\author{Wilfrid Gangbo}
\address{Department of Mathematics, University of California at Los Angeles, 
Los Angeles, CA 90095, U.S.A.}
\author{Jan Haskovec, Peter Markowich, Jesus Sierra}
\address{CEMSE Division, King Abdullah University of Science and Technology, Box 4700, Thuwal 23955-6900, Saudi Arabia}
\keywords{Kinetic equation, Hamiltonian flow, Wasserstein space, Poisson structure, Moreau--Yosida approximation}
\date{\today}
\begin{document}

\begin{abstract}
We study the existence theory of solutions of the kinetic Bohmian equation, a nonlinear Vlasov-type equation proposed for the phase-space formulation of Bohmian mechanics. Our main idea is to interpret the kinetic Bohmian equation as a Hamiltonian system defined on an appropriate Poisson manifold built on a Wasserstein space. We start by presenting an existence theory for stationary solutions of the kinetic Bohmian equation. Afterwards, we develop an approximative version of our Hamiltonian system in order to study its associated flow. We then prove existence of solutions of our approximative version. Finally, we present some convergence results for the approximative system, the aim being to establish that, in the limit, the approximative solution satisfies the kinetic Bohmian equation in a weak sense. 
\end{abstract}

\maketitle

\section{Introduction}

In this paper, we study the existence theory of solutions of the kinetic Bohmian equation \cite{markowich2010bohmian,markowich2012dynamics},
\begin{equation}
\partial_{t}\beta+v\cdot\nabla_{x}\beta-\nabla_{x}\left(V-\frac{1}{2}\frac{\triangle_{x}\sqrt{\varrho}}{\sqrt{\varrho}}\right)\cdot\nabla_{v}\beta=0,\label{eq:Vl_eqn}
\end{equation}
along with the initial value,
\begin{equation}
\beta\left(t=0,x,v\right)=\beta_{0}\in\mathcal{M}^{+}\left(\mathbb{R}^{d}\times\mathbb{R}^{d}\right),\label{eq:Vl_IC}
\end{equation}
where $v,x\in\mathbb{R}^{d}$, $t\geq0$, and $\mathcal{M}^{+}\left(\mathbb{R}^{d}\times\mathbb{R}^{d}\right)$
denotes the set of nonnegative Radon measures defined on $phase$
$space$, $\mathbb{R}^{d}\times\mathbb{R}^{d}$. Furthermore,
$V:\mathbb{R}^{d}\rightarrow\mathbb{R}$ is a potential satisfying some regularity assumptions given
below, and $\beta=\beta\left(t,x,v\right)$
represents the $generalized$ Bohmian measure. Finally, $\varrho=\varrho\left(t,x\right)$
is the position density given by
\[
\varrho\left(t,x\right)=\int_{\mathbb{R}^{d}}\beta\left(t,x,dv\right).
\]
For a comprehensive review of Bohmian mechanics and its
role in quantum mechanics, see, e.g., \cite{deurr2009bohmian,cushing2013bohmian}.

It was shown in \cite{markowich2010bohmian,markowich2012dynamics} that if the initial condition (\ref{eq:Vl_IC}) is a mono-kinetic measure, then there exists a connection between the kinetic Bohmian equation and the linear Schr\"{o}dinger equation that can be used to establish an existence theory for solutions of (\ref{eq:Vl_eqn}). Nevertheless, for the more general situation given by (\ref{eq:Vl_eqn})-(\ref{eq:Vl_IC}), such connection is lost. In this case, our analysis relies on interpreting the kinetic Bohmian equation as a Hamiltonian system on a space of probability measures in the following way. Let $\mathcal P_2(\mathbb R^d \times \mathbb R^d)$ stand for the set of Borel probability measures on $\mathbb R^d \times \mathbb R^d$ with finite second moments and consider the Hamiltonian $\mathcal H:\mathcal P_2(\mathbb R^d \times \mathbb R^d)\rightarrow\mathbb{R}\cup\{+\infty\} $ given by 
\[
\mathcal H(\mu):={1 \over 2} \int_{\mathbb R^d \times \mathbb R^d} |v|^2 \mu(dx, dv)+ \int_{\mathbb R^d \times \mathbb R^d} V(x) \mu(dx, dv)+  {1 \over 8} \int_{\mathbb R^d} {|\nabla \varrho|^2\over \varrho}\varrho(dx) + \chi_0\bigl((\pi^1_\# \mu)_s\bigr),
\]  
where we have used the Radon--Nikodym decomposition  
\[
\pi^1_\# \mu =\varrho \mathcal L^d + (\pi^1_\# \mu)_s,
\] 
$\pi^{1}:\mathbb{R}^{d}\times\mathbb{R}^{d}:\left(w,z\right)\rightarrow w$ represents the first projection of $\mathbb{R}^{d}\times\mathbb{R}^{d}$ onto $\mathbb{R}^{d}$, and $\chi_0: \mathcal P_2(\mathbb R^d) \rightarrow \{0, +\infty\}$ assumes the value $0$ on null measures and the value $+\infty$ on probability measures of positive total mass. Formally, at least, if the metric slope of $\mathcal H$ at $\mu$ is finite, under suitable conditions, the subdifferential  of $\mathcal H$ at $\mu$ is not empty. Its unique element of minimal norm is a Borel vector field, $\nabla_\mu \mathcal H: \mathbb R^d \times \mathbb R^d \rightarrow \mathbb R^d \times \mathbb R^d$, which is referred to as the Wasserstein gradient of $\mathcal H$ at $\mu$. $\nabla_\mu \mathcal H$ belongs to the range of the projection map 
\[
\pi_\mu: L^2(\mu) \rightarrow \overline{\nabla C_c^\infty(\mathbb R^d \times \mathbb R^d)}^{L^2(\mu)}
\]
and is given by  
\[
\nabla_\mu \mathcal H(x,v)= 
\left(
\begin{array}{c}
\nabla_x V(x) -{1 \over 2} \nabla_x \Bigl( {\triangle_x \sqrt \varrho \over \sqrt \varrho} \Bigr) (x) \\ \\
v\\
\end{array}
\right). 
\] 

Using the $(2d) \times (2d)$ symplectic matrix 
\[ J=\left( \begin{array}{cc}
0 & I_d  \\
-I_d & 0   \end{array} \right),\]
the theory developed in \cite{GangboKP} allows us to define a Poisson structure for which  $X_{ \mathcal H}:=\pi_\mu\bigl(J  \nabla_\mu \mathcal H\bigr)$ is a  Hamiltonian vector field; we have  
\[
X_{\mathcal H}(\mu)(x,v)= \pi_\mu \left(
\begin{array}{c}
v  \\ \\
-\nabla_x V(x) +{1 \over 2} \nabla_x \Bigl( {\triangle_x \sqrt \varrho \over \sqrt \varrho} \Bigr) (x)\\
\end{array}
\right).
\] 
On the other hand, the path $t \rightarrow \mu_t \in  \mathcal P_2(\mathbb R^d \times \mathbb R^d)$ is said to be driven by a velocity vector field, ${\mathbf v}: (0,1) \times \mathbb R^d \times \mathbb R^d \rightarrow \mathbb R^d \times \mathbb R^d$, if 
\[
\partial_t \mu + \nabla \cdot (\mu {\mathbf v})=0,
\] 
in the sense of distributions. According to \cite{ambrosio2008hamiltonian}, the path $t \rightarrow \mu_t$ satisfies the Hamiltonian system (defined in the context of Poisson geometry) 
\[
\dot \mu= X_{\mathcal H}(\mu) 
\]
if $X_{\mathcal H}(\mu)$ is a velocity vector field driving $t \rightarrow \mu_t$, namely,
\[
\partial_t \mu+ \nabla_x \cdot (v \mu)= \nabla_v \cdot \Bigl[ \mu \Bigl( \nabla_x V(x) -{1 \over 2} \nabla_x \bigl( {\triangle_x \sqrt \varrho \over \sqrt \varrho} \bigr) \Bigr) \Bigr],
\] 
in the sense of distributions. This is exactly (\ref{eq:Vl_eqn}) when $\mu_t = \beta(t, \cdot, \cdot) \mathcal L^{2d}.$

Therefore, now we can say that one of the main ideas of this paper is to investigate the existence theory of solutions of the kinetic Bohmian equation through the Hamiltonian flow generated by the Hamiltonian vector field $X_{ \mathcal H}$.

To motivate the study of the kinetic Bohmian equation, let us start by reviewing the aforementioned connection with the linear Schr\"{o}dinger equation,  

\begin{equation}
i\partial_{t}\psi=-\frac{1}{2}\triangle\psi+V\psi,\quad\psi\left(t=0, \cdot \right)=\psi_{0}\in L^{2}\left(\mathbb{R}^{d};\mathbb{C}\right).\label{eq:Sch}
\end{equation}
A thorough analysis of this equation can be found in, e.g., \cite{cazenave2003semilinear,sulem1999nonlinear,tao2006nonlinear}. We adopt the normalization of the initial data, i.e., $\left\Vert \psi_{0}\right\Vert _{L^{2}}=1$. Thus,

\begin{equation}
\left\Vert \psi\left(t\right)\right\Vert _{L^{2}}=\left\Vert \psi_{0}\right\Vert _{L^{2}}=1.\label{eq:mass}
\end{equation}
In addition, we assume that $\psi$ has bounded
initial energy. The energy is conserved for all $t\geq0$ and is given
by

\[
E\left(t\right):=\frac{1}{2}\int_{\mathbb{R}^{d}}\left|\nabla\psi\left(t,x\right)\right|^{2}dx+\int_{\mathbb{R}^{d}}V\left(x\right)\left|\psi\left(t,x\right)\right|^{2}dx=E\left(0\right).
\]
Note that the Schr\"{o}dinger equation (\ref{eq:Sch}) has a reduced Planck constant equal to one ($\hbar = 1$). 

As a consequence of (\ref{eq:mass}), one can define real-valued probability
densities from $\psi\left(t,x\right)\in\mathbb{C}$.
These probability densities can be used to compute expectation values of physical observables. In particular, we have the $position$ and $current$ $densities$ given by

\begin{equation}
\varrho=\varrho\left(t,x\right)=\left|\psi\left(t,x\right)\right|^{2},\quad J =J\left(t,x\right)=\mathrm{Im}\left(\overline{\psi}\left(t,x\right)\nabla\psi\left(t,x\right)\right).\label{eq:dens_curr}
\end{equation}

\begin{definition}
(Bohmian measure \cite{markowich2010bohmian,markowich2012dynamics}). For $\psi\in H^{1}\left(\mathbb{R}^{d}\right)$,
with associated densities $\varrho$, $J$
given by (\ref{eq:dens_curr}), the Bohmian measure $\beta=\beta\left[\psi\right]\in\mathcal{M}^{+}\left(\mathbb{R}^{d}\times\mathbb{R}^{d}\right)$ is defined by

\begin{equation}
\left\langle \beta,\varphi\right\rangle :=\int_{\mathbb{R}^{d}}\varrho\left(x\right)\varphi\left(x,\frac{J \left(x\right)}{\varrho\left(x\right)}\right)dx,\quad\forall\varphi\in C_{0}\left(\mathbb{R}^{d}\times\mathbb{R}^{d}\right),\label{eq:Bohm_me}
\end{equation}
 where $C_{0}\left(\mathbb{R}^{d}\times\mathbb{R}^{d}\right)$
denotes the space of continuous functions vanishing at infinity.
\end{definition}
Let
\begin{equation}
\beta_{0}=\beta_{0}\left(x,v\right)=\varrho_{0}\left(x\right)\delta\left(v-u_{0}\left(x\right)\right),\label{eq:initial_data}
\end{equation}
where $\varrho_{0}\equiv\varrho\left(t=0,x\right)$, $u_{0}\equiv u\left(t=0,x\right)$,  $u = u\left(t,x\right):=J / \varrho$, and $\delta$ is the delta distribution on $\mathbb{R}^{d}$.  It was shown in \cite{markowich2010bohmian} that if $\psi\left(t,x\right)$
solves the Schr\"{o}dinger equation (\ref{eq:Sch}), then the corresponding
Bohmian measure (\ref{eq:Bohm_me}) is the push-forward of (\ref{eq:initial_data})
under the phase space flow

\[
\Phi_{t}:\left(x,v\right)\mapsto\left(X\left(t,x,v\right),P\left(t,x,v\right)\right),
\]
induced by

\begin{equation}
\begin{cases}
\begin{array}{l}
\dot{X}=P,\\
\dot{P}=-\nabla V\left(X\right)-\nabla V_{B}\left(t,X\right),
\end{array}\end{cases}\label{eq:B_flow}
\end{equation}
where $V_{B}\left(t,x\right)$ is the Bohm potential:
\[
V_{B}\left(t,x\right):=-\frac{1}{2}\frac{\triangle\sqrt{\varrho\left(t,x\right)}}{\sqrt{\varrho\left(t,x\right)}}.
\]
Note that the specific form of the initial data (\ref{eq:initial_data})
implies that the phase-space flow $\Phi_{t}$, governed
by (\ref{eq:B_flow}), is initially projected onto the graph of $u_{0}$,
that is,
\begin{equation}
\mathcal{L}:=\left\{ \left(x,v\right)\in\mathbb{R}^{d}\times\mathbb{R}^{d}:v=u_{0}\left(x\right)\right\} .\label{eq:lag_man} 
\end{equation}
This imposes a big limitation for the application of the theory developed in \cite{markowich2010bohmian,markowich2012dynamics}: from the whole
phase space, we are restricted to the $Lagrangian$ $submanifold$
(\ref{eq:lag_man}) for the initial condition of (\ref{eq:B_flow}).

Furthermore, it was proved in \cite{markowich2012dynamics} that for
$V\in C_{b}^{1}\left(\mathbb{R}^{d};\mathbb{R}\right)$ and $\psi_{0}\in H^{3}\left(\mathbb{R}^{d}\right)$
with corresponding $\varrho_{0}$, $J_{0}$
given by (\ref{eq:dens_curr}), the Bohmian
measure
\[
\beta\left(t,x,v\right)=\varrho\left(t,x\right)\delta\left(v-u\left(t,x\right)\right),
\]
is a weak solution of the kinetic Bohmian equation in $\mathcal{D}'\left(\mathbb{R}\times\mathbb{R}^{d}\times\mathbb{R}^{d}\right)$
and in $\mathcal{D}'\left(\left[0,\infty\right)\times\mathbb{R}^{d}\times\mathbb{R}^{d}\right)$
with initial data (\ref{eq:initial_data}). On the other hand, the
uniqueness theory is still an open problem.

As mentioned before, the purpose of this paper is to study the kinetic Bohmian equation
with the more general initial data (\ref{eq:Vl_IC}), which implies that the connection with the Schr\"{o}dinger equation is lost. Nevertheless, the
idea is to use the Wasserstein gradient/Hamiltonian flow techniques
to generate rigorous results on (\ref{eq:Vl_eqn})-(\ref{eq:Vl_IC})
with the aim of overcoming the limitations mentioned above, in particular, the restriction from the whole phase space to the Lagrangian submanifold (\ref{eq:lag_man}). Moreover, this opens the door for a new interpretation of Bohmian mechanics through optimal transportation.

The remainder of this paper is organized as follows. In Section 2, we presents the basic theory and notation used throughout our analysis. In Section 3, we study the existence of stationary solutions of the kinetic Bohmian equation. Sections 4, 5, and 6 are devoted to the development of an approximative version of the kinetic Bohmian equation; in particular, we prove existence of solutions of this approximative version in Section 6. In Section 7, we present some convergence results for the approximative model developed in Sections 4, 5, and 6. Conclusions are drawn in Section 8.

%
%
%

\section{Preliminaries}\label{sec:prelim}
Since most of our work is performed inside the framework of
probability measures, we present now the basic concepts and notation
for this topic. A comprehensive review of this subject can be found in \cite{parthasarathy1967probability}. Furthermore, the theory of optimal transportation is extensively studied in \cite{ambrosio2008gradient,villani2003topics,santambrogio2015optimal}.

A Borel measure on a topological space, $X$, is any measure defined on the $\sigma-$algebra generated by the open sets of $X$. The elements of such $\sigma-$algebra are called the Borel sets. Furthermore, a map, $f:X \rightarrow Y$, between the topological spaces $X$ and $Y$, is called a Borel map if $f^{-1}(B)$ is a Borel set for any Borel set $B \subset Y$

Suppose that $\mu$ and $\nu$ are nonnegative Borel measures on the
topological spaces $X$ and $Y$, respectively. We say that the Borel
map $T:X\rightarrow Y$ transports $\mu$ into $\nu$, denoted by
$T_{\#}\mu=\nu$, if for every Borel set $B\subset Y$ we have $\nu\left[B\right]=\mu\left[T^{-1}\left(B\right)\right]$; in this case, we also say the $\nu$ is the pushforward of $\mu$ through $T$. We shall represent by $\mathcal{J}\left(\mu,\nu\right)$ the set of
all Borel maps, $T$, satisfying $T_{\#}\mu=\nu$.

Let $\pi^{1}:X \times Y \rightarrow X$ be the projection of $X \times Y$ onto $X$ and let $\pi^{2}:X \times Y \rightarrow Y$ be the projection of $X \times Y$ onto $Y$. A nonnegative Borel measure, $\gamma$, on $X\times Y$ is said to have marginals
$\mu$ and $\nu$ if $\mu=\pi^{1}_{\#}\gamma$ and $\nu=\pi^{2}_{\#}\gamma$;
in this case, $\gamma$ is called a transport plan between $\mu$
and $\nu$. The set of all transport plans between $\mu$ and $\nu$
is denoted by $\Gamma\left(\mu,\nu\right)$.

Let $d\geq1$ be an integer and let $D\in\left\{ d,2d\right\} $.
The $D$-dimensional Lebesgue measure on $\mathbb{R}^{D}$ is represented
by $\mathcal{L}^{D}$. $\mathcal{P}\left(\mathbb{R}^{D}\right)$ stands
for the set of Borel probability measures on $\mathbb{R}^{D}$. The
second moment of $\mu\in\mathcal{P}\left(\mathbb{R}^{D}\right)$ is
defined as
\[
M_{2}\left(\mu\right):=\int_{\mathbb{R}^{D}}\left|z\right|^{2}d\mu\left(z\right).
\]
Furthermore,
\[
\mathcal{P}_{2}\left(\mathbb{R}^{D}\right):=\left\{ \mu\in\mathcal{P}\left(\mathbb{R}^{D}\right):M_{2}\left(\mu\right)<+\infty\right\} .
\]
The subspace of $\mathcal{P}_{2}\left(\mathbb{R}^{D}\right)$ of absolutely
continuous measures with respect to $\mathcal{L}^{D}$ is represented
by $\mathcal{P}_{2}^{r}\left(\mathbb{R}^{D}\right)$. 

For $\mu\in\mathcal{P}_{2}\left(\mathbb{R}^{D}\right)$, we denote
by $L^{2}\left(\mu\right)$ the set of Borel vector fields, $\xi:\mathbb{R}^{D}\rightarrow\mathbb{R}^{D}$,
which are $\mu-$measurable and satisfy
\[
\left\Vert \xi\right\Vert _{\mu}^{2}:=\int_{\mathbb{R}^{D}}\left|\xi\left(z\right)\right|^{2}d\mu\left(z\right)<+\infty.
\]

$\mathcal{P}_{2}\left(\mathbb{R}^{D}\right)$ is canonically endowed
with the Wasserstein distance, $W_{2}$, defined by
\begin{equation}
W_{2}^{2}\left(\mu,\nu\right):=\underset{\gamma}{\min}\left\{ \int_{\mathbb{R}^{D}\times\mathbb{R}^{D}}\left|x-y\right|^{2}d\gamma\left(x,y\right):\gamma\in\Gamma\left(\mu,\nu\right)\right\} .\label{eq:W_dist}
\end{equation}
Any minimizer in (\ref{eq:W_dist}) is called an optimal transport
plan between $\mu$ and $\nu$. The set of all such minimizers is
indicated by $\Gamma_{o}\left(\mu,\nu\right)$.

Suppose now that $\mu\in\mathcal{P}_{2}^{r}\left(\mathbb{R}^{D}\right)$
and $\nu\in\mathcal{P}_{2}\left(\mathbb{R}^{D}\right)$. Then, there
exists a unique minimizer, $\gamma_{o}$, in (\ref{eq:W_dist}) which
can be represented as $\gamma_{o}=\left(\mathbf{id}\times T_{\mu}^{v}\right)_{\#}\mu$
for some $T_{\mu}^{\nu}:\mathbb{R}^{D}\rightarrow\mathbb{R}^{D}$
that coincides $\mu-$a.e. with the gradient of a convex function and satisfies $T_{\mu\#}^{\nu}\mu=\nu$.
Hence, $T_{\mu}^{\nu}$ is the unique minimizer of 
\[
T\rightarrow\int_{\mathbb{R}^{D}}\left|z-T\left(z\right)\right|^{2}d\mu\left(z\right),
\]
over $\mathcal{J}\left(\mu,\nu\right)$.

$\left(\mathcal{P}_{2}\left(\mathbb{R}^{D}\right),W_{2}\right)$ is
a Polish space, namely, a complete and separable metric space (see Section
7.1 in \cite{villani2003topics} and Proposition 7.1.5 in \cite{ambrosio2008gradient}
for details). On the other hand, it is not locally compact. Nevertheless,
bounded sets in $\mathcal{P}_{2}\left(\mathbb{R}^{D}\right)$ are
sequentially relatively compact with respect to the narrow convergence; a sequence $\left( \mu_{k}\right) _{k}\subset\mathcal{P}\left(\mathbb{R}^{D}\right)$
converges narrowly to $\mu\in\mathcal{P}\left(\mathbb{R}^{D}\right)$
as $k\rightarrow\infty$ if
\[
\underset{k\rightarrow\infty}{\lim}\int_{\mathbb{R}^{D}}g\left(z\right)d\mu_{k}\left(z\right)=\int_{\mathbb{R}^{D}}g\left(z\right)d\mu\left(z\right),
\]
for every $g\in C_{b}^{0}\left(\mathbb{R}^{D}\right)$, the space
of bounded and continuous functions on $\mathbb{R}^{D}$. Moreover,
a sequence $\left( \beta_{k}\right) _{k}\subset\mathcal{P}_{2}\left(\mathbb{R}^{D}\right)$
converges to $\beta\in\mathcal{P}_{2}\left(\mathbb{R}^{D}\right)$
if and only if $\left( \beta_{k}\right) _{k}$ converges
narrowly to $\beta$ and $M_{2}\left(\beta_{k}\right)\rightarrow M_{2}\left(\beta\right)$
as $k\rightarrow\infty$.

A particularly important subject for our analysis is the differentiable
Riemannian structure of $\mathcal{P}_{2}\left(\mathbb{R}^{D}\right)$,
which can be derived from its metric structure. For such derivation,
we first have to characterize the absolutely continuous curves $\mu_{t}:\left[a,b\right]\rightarrow\mathcal{P}_{2}\left(\mathbb{R}^{D}\right)$.
As proved in Theorem 8.3.1 of \cite{ambrosio2008gradient}, if $\mu_{t}$
solves the continuity equation
\begin{equation}
\frac{d}{dt}\mu_{t}+\nabla\cdot\left(w_{t}\mu_{t}\right)=0,\label{eq:cont_eq}
\end{equation}
in the sense of distributions in $\left(a,b\right)\times\mathbb{R}^{D}$
for some time-dependent velocity vector field, $w_{t}$, with $\left\Vert w_{t}\right\Vert _{\mu_{t}}\in L^{1}\left(a,b\right)$,
then
\begin{equation}
W_{2}\left(\mu_{s},\mu_{t}\right)\leq\int_{s}^{t}\left\Vert w_{\tau}\right\Vert _{\mu_{\tau}}d\tau\qquad\forall a\leq s<t\leq b.\label{eq:AB}
\end{equation}
Therefore, the map $t\mapsto\mu_{t}$ is absolutely continuous from
$\left[a,b\right]$ to $\mathcal{P}_{2}\left(\mathbb{R}^{D}\right)$.
Conversely, for any absolutely continuous curve, $t\mapsto\mu_{t}$,
there exists a unique (up to $\mathcal{L}^{1}-$negligible sets in
time) velocity vector field, $v_{t}$, for which the continuity equation
(\ref{eq:cont_eq}) holds, along with asymptotic equality in (\ref{eq:AB}):
\[
\underset{h\rightarrow0}{\lim}\frac{1}{\left|h\right|}W_{2}\left(\mu_{t+h},\mu_{t}\right)=\left\Vert v_{t}\right\Vert _{\mu_{t}}\quad\textrm{for a.e. }t.
\]
Proposition 8.4.5 of \cite{ambrosio2008gradient} shows that this
minimality property of $v_{t}$ is equivalent to the fact that
\[
v_{t}\in\overline{\left\{ \nabla\varphi:\varphi\in C_{c}^{\infty}\left(\mathbb{R}^{D}\right)\right\} }^{L^{2}\left(\mu_{t}\right)}.
\]
This result leads to the identification of $v_{t}$ as the ``tangent''
velocity vector to $\mu_{t}$. Hence, the tangent space to $\mathcal{P}_{2}\left(\mathbb{R}^{D}\right)$
at $\mu$ is defined as
\[
T_{\mu}\mathcal{P}_{2}\left(\mathbb{R}^{D}\right):=\overline{\left\{ \nabla\varphi:\varphi\in C_{c}^{\infty}\left(\mathbb{R}^{D}\right)\right\} }^{L^{2}\left(\mu\right)}.
\]
Furthermore, using a simple duality argument, it has been proved in
Lemma 8.4.2 of \cite{ambrosio2008gradient} that
\[
\left[T_{\mu}\mathcal{P}_{2}\left(\mathbb{R}^{D}\right)\right]^{\perp}=\left\{ w\in L^{2}\left(\mu\right):\nabla\cdot\left(w\mu\right)=0\right\} .
\]

The following is a useful characterization of the tangent velocity
vector, $v_{t}$, given in Proposition 8.4.6 of \cite{ambrosio2008gradient}:
\[
\underset{h\rightarrow0}{\lim}\left(w,\frac{z-w}{h}\right)_{\#}\gamma_{h}=\left(\mathbf{id},v_{t}\right)_{\#}\mu_{t}\quad\textrm{in }\mathcal{P}_{2}\left(\mathbb{R}^{D}\times\mathbb{R}^{D}\right),
\]
for almost every $t$ and any $\gamma_{h}\in\Gamma_{o}\left(\mu_{t},\mu_{t+h}\right)$.
In addition, if $\mu_{t}\in\mathcal{P}_{2}^{r}\left(\mathbb{R}^{D}\right)$,
then the last characterization becomes
\[
\frac{t_{h}-\mathbf{id}}{h}\rightarrow v_{t}\quad\textrm{in }L^{2}\left(\mu_{t};\mathbb{R}^{D}\right)\textrm{ as }h\rightarrow0,
\]
where $t_{h}$ are the optimal transport maps between $\mu_{t}$ and
$\mu_{t+h}$.

We present now some basic results from convex analysis in $\mathcal{P}_{2}\left(\mathbb{R}^{D}\right)$
which are extensively used in the sequel.

Let $\mu_{0},\mu_{1}\in\mathcal{P}_{2}\left(\mathbb{R}^{D}\right)$
and let $\gamma\in\Gamma_{o}\left(\mu_{o},\mu_{1}\right)$. Let $\pi^{1}:\mathbb{R}^{D}\times\mathbb{R}^{D}:\left(w,z\right)\rightarrow w$
and $\pi^{2}:\mathbb{R}^{D}\times\mathbb{R}^{D}:\left(w,z\right)\rightarrow z$
be the first and second projections of $\mathbb{R}^{D}\times\mathbb{R}^{D}$
onto $\mathbb{R}^{D}$, respectively. Consider the interpolation between
the measures $\mu_{0}$ and $\mu_{1}$ given by
\[
\mu_{t}=\left(\left(1-t\right)\pi^{1}+t\pi^{2}\right)_{\#}\gamma,\quad t\in\left[0,1\right].
\]
Theorem 7.2.2 of \cite{ambrosio2008gradient} shows that $t\mapsto\mu_{t}$
is a constant speed geodesic in $\mathcal{P}_{2}\left(\mathbb{R}^{D}\right)$,
i.e., $W_{2}\left(\mu_{s},\mu_{t}\right)=\left|t-s\right|W_{2}\left(\mu_{0},\mu_{1}\right)$
for all $s,t\in\left[0,1\right]$. In addition, any constant speed
geodesic has this representation for a suitable optimal transport
plan, $\gamma$. 

Let $\phi:\mathcal{P}_{2}\left(\mathbb{R}^{D}\right)\rightarrow\left[-\infty,+\infty\right]$.
We define the effective domain of $\phi$ as
\[
D\left(\phi\right):=\left\{ z\in\mathcal{P}_{2}\left(\mathbb{R}^{D}\right):-\infty<\phi\left(z\right)<+\infty\right\} .
\]

\begin{definition}
{($\lambda-$convexity)}. Let $\phi:\mathcal{P}_{2}\left(\mathbb{R}^{D}\right)\rightarrow\left[-\infty,+\infty\right]$
be such that $D\left(\phi\right)\neq\emptyset$ and let $\lambda\in\mathbb{R}$.
We say that $\phi$ is $\lambda-$convex if for every $\mu_{0},\mu_{1}\in\mathcal{P}_{2}\left(\mathbb{R}^{D}\right)$
and every $\gamma\in\Gamma_{o}\left(\mu_{0},\mu_{1}\right)$ we have
\[
\phi\left(\mu_{t}\right)\leq\left(1-t\right)\phi\left(\mu_{0}\right)+t\phi\left(\mu_{1}\right)-\frac{\lambda}{2}t\left(1-t\right)W_{2}^{2}\left(\mu_{0},\mu_{1}\right)\quad\forall t\in\left[0,1\right],
\]
where $\mu_{t}=\left(\left(1-t\right)\pi^{1}+t\pi^{2}\right)_{\#}\gamma$.
In particular, $0-$convexity corresponds to the so-called displacement
convexity. 
\end{definition}

\begin{definition}\label{de:differentiability1} Let $\mathcal G: \mathcal P_2(\mathbb R^D) \rightarrow [-\infty, \infty]$ be such that $D(\mathcal G)\not =\emptyset$ and let  $\mu \in D(\mathcal G)$. 
\begin{enumerate} 
\item[(i)] We say that $\xi$ belongs to the subdifferential of $\mathcal G$ at $\mu$, and we write $\xi \in \ubar{\partial} \mathcal G$, if $\xi \in L^2(\mu)$ and 
\begin{equation}\label{eq:differentiability1} 
\mathcal G(\nu)-\mathcal G(\mu)\geq \sup_{\gamma\in\Gamma_{o}(\mu, \nu) } \int_{\mathbb R^D \times \mathbb R^D} \xi( w) \cdot (z- w) \gamma( dw,dz )+o\big(W_2(\mu,\nu)\big) \qquad \forall \; \nu\in D(\mathcal G).
\end{equation} 
The unique element of minimal norm in $ \ubar{\partial} \mathcal G(\mu)$ belongs to $T_\mu \mathcal P_2(\mathbb R^D)$ and is called the gradient of $\mathcal G$ at $\mu$; it is denoted by $\nabla_\mu \mathcal G(\mu).$  
\item[(ii)] We say that $\xi$ belongs to the superdifferential of $\mathcal G$ at $\mu$, and we write $\xi \in \bar{\partial} \mathcal G(\mu)$, if $-\xi \in \ubar{\partial} (-\mathcal G)(\mu).$ 
\item[(iii)]  We say that $\mathcal G$ is differentiable at $\mu$ if both $\ubar{\partial} \mathcal G(\mu)$ and $\bar{\partial} \mathcal G(\mu)$ are non empty. In that case (see e.g.  \cite{GangboNT2}) both sets coincide and 
\[ 
\ubar{\partial} \mathcal G (\mu)\cap T_\mu \mathcal P_2(\mathcal R^D) =\bar{\partial} \mathcal G(\mu)\cap T_\mu \mathcal P_2(\mathbb R^D)=\{\nabla_\mu \mathcal G(\mu)\}.
\]
\end{enumerate} 
Therefore, there is no ambiguity if we define the gradient of $\mathcal G$ at $\mu$  as the unique element of minimal norm in $ \bar{\partial} \mathcal G(\mu)$; we denote it by $\nabla_\mu \mathcal G(\mu).$
\end{definition} 

\begin{remark}\label{re:differentiability1} Here are some remarks.
\begin{enumerate} 
\item[(i)] We refer the reader to Remark 3.2 of \cite{GangboNT2} for property (iii) in Definition \ref{de:differentiability1}.
\item[(ii)]  Due to Proposition 8.5.4 of \cite{ambrosio2008gradient}, (\ref{eq:differentiability1}) holds for $\xi$ if and only if it holds for any $\xi_0 \in L^2(\mu)$ such that $\xi_0 -\xi$ belongs to the orthogonal complement of $T_\mu \mathcal P_2(\mathbb R^D)$ in $L^2(\mu).$  Rephrasing, if  (\ref{eq:differentiability1}) holds for $\xi_0 \in L^2(\mu)$, then it holds for $\xi$ defined as the orthogonal projection of $\xi_0$ onto $T_\mu \mathcal P_2(\mathbb R^D).$ Hence, 
\[\nabla_\mu \Phi(\mu) + \{\xi \in L^2(\mu)\; | \; {\rm div\,}_\mu(\xi)=0 \} \subset  \bar{\partial} \Phi(\mu).\]
\item[(iii)]  Define $\psi(\nu)=1 / 2W_2^2(\nu, \varrho)$ for $\nu \in \mathcal P_2(\mathbb R^D),$ where $\varrho \in \mathcal P_2(\mathbb R^D)$ is absolutely continuous. The proof of Proposition 10.4.12 \cite{ambrosio2008gradient} reveals that if $\xi \in \bar{\partial} \psi(\nu)$, since $\gamma \in \Gamma_{o}(\nu, \varrho)$ has a unique element, then  $\pi_\nu(\xi)= {\bf id}-\bar \gamma$, where $\bar \gamma$ is the barycentric projection of $\gamma$. Hence, 
\[
\bar{\partial} \psi(\nu)= {\bf id}-\bar \gamma + \bigl\{v \in L^2(\nu)\; | \;  {\rm div\,}_\nu(v)=0 \bigr\}. 
\]
\end{enumerate}
\end{remark}

We next list some facts about proper functionals, $\Phi: \mathcal P_2(\mathbb R^d  \times \mathbb R^d) \rightarrow \mathbb R \cup \{\infty\}$, for which there exists  a functional, $\phi: \mathcal P_2(\mathbb R^d) \rightarrow \mathbb R \cup \{\infty\}$, such that 
\[
\Phi(\mu)=\phi(\pi^1_\# \mu).
\] 
If $\xi= (\xi_1, \xi_2) \in {\ubar \partial    \Phi(\mu)}$,  then ${\bar \xi}_1 \in {\ubar \partial} \phi(\varrho)$,  where 
\[ 
{\bar \xi}_1(x)= \int_{\mathbb R^d} \xi_1(x, v) \mu_x(dv)
\]
and $(\mu_x)_{x \in \mathbb R^d}$ is the disintegration of $\mu$ with respect to $\varrho.$ This result holds under the assumption that $\bar \partial \Phi(\mu) \not =\emptyset.$ Moreover,  if $\Phi$ is bounded below and lower semicontinuity for the narrow convergence, we can then draw some conclusions about the functionals $\Phi_\tau$ defined in  \ref{eq:dec20.2015.1}, the Moreau--Yosida approximations of $\Phi$. First,  $\bar \partial \Phi_\tau(\mu) \not =\emptyset$ and 
\[
\Phi_\tau(\mu)= \phi_\tau(\varrho).
\]   
Second, if we further assume that the domain of $\phi$ is contained in $\mathcal P_2^r(\mathbb R^d)$ and $\varrho \in \mathcal P_2^r(\mathbb R^d)$, then $\bar \partial \Phi_{\tau}(\mu)$ and $\bar \partial \phi_{\tau}(\varrho)$ are non empty and their elements of minimal norm, respectively denoted by  $\nabla_\mu \Phi_{\tau}(\mu)$ and $\nabla_\varrho \phi_{\tau}(\varrho)$, satisfy 
\[
\nabla_\mu \Phi_\tau(\mu)(x,v)=  
\left(
\begin{array}{c}
\nabla_\varrho \phi_\tau(\varrho)(x) \\ \\
0\\
\end{array}
\right). 
\] 
This is a subtle statement, since  (cf. Remark \ref{re:differentiability1} (ii))
 \[
\nabla_\varrho \phi(\varrho) + \{u \in L^2(\varrho)\; | \; {\rm div\,}_\varrho(u)=0 \} \subset   \bar{\partial} \phi(\varrho) 
 \] 
 and similarly, 
\begin{equation}\label{eq:jan07.2016.1}
\nabla_\mu \Phi(\mu) + \{\xi \in L^2(\mu)\; | \; {\rm div\,}_\mu(\xi)=0 \} \subset  \bar{\partial} \Phi(\mu).
 \end{equation} 
Thus, there are elements, $\Sigma$, of $\ubar{\partial} \Phi(\mu)$ which are functions of $(x,v)$ and have second components that are not null. To see this, it suffices to choose $\xi$ such that $ {\rm div\,}_\mu(\xi)=0$ with $\xi(x,v)$ depending on $(x,v)$ and $\pi^2(\xi) \not =0$; then, just set $\Sigma= \nabla_\mu \Phi(\mu)+\xi$. 

Finally, for simplicity of notation, we define the Fisher information, $8\mathcal F$, by (see \cite{gianazza2009wasserstein,matthes2009family}):  
\begin{equation}\label{eq:Fisher}
8 \mathcal F(\varrho):= 
\left\{
\begin{array}{l}
4\int_{\mathbb{R}^{D}} |\nabla \sqrt {\varrho}|^2 dx\quad \quad \hbox{if} \quad \sqrt \varrho \in W^{1,2}(\mathbb{R}^{D}) \cap\{\varrho \geq 0\}, \\
\hfill +\infty \qquad \qquad \quad \, \hbox{if} \quad \sqrt \varrho \not \in W^{1,2}(\mathbb{R}^{D}) \cap\{\varrho \geq 0\}.
\end{array}
\right.
\end{equation}
The Fisher information plays a fundamental role in our subsequent analysis.

%
%
%
\section{Stationary solutions on the tangent bundle $TM:=\mathbb R^d \times \mathbb R^d$}
In this section, we start our analysis by exploring special solutions of the kinetic Bohmian equation (\ref{eq:Vl_eqn}). To this end, define the Hamiltonian function
\[
H\left(x,v\right):=\frac{1}{2}\left|v\right|^{2}-\frac{1}{2}\frac{\triangle\sqrt{\varrho\left(x\right)}}{\sqrt{\varrho\left(x\right)}}+V\left(x\right),
\]
and consider solutions of (\ref{eq:Vl_eqn}) of the form
\[
\beta\left(x,v\right)=F\left(H\left(x,v\right)-\eta\right),
\]
where $\eta\in\mathbb{R}$ represents a (quasi) Fermi level and $F:\mathbb{R}\rightarrow\mathbb{R}^{+}$ is a continuous strictly decreasing function. In particular, we are interested in functions $F:\mathbb{R}\rightarrow\mathbb{R}^{+}$ satisfying 
\begin{equation}\label{eq:defn-of-A}
A\left(\alpha\right):=\int_{\mathbb{R}^{D}}F\left(\frac{1}{2}\left|v\right|^{2}+\alpha\right)dv<\infty,
\end{equation}
for any $\alpha \in \mathbb R$. Furthermore, the condition
\[
\int_{\mathbb{R}^{D}}\int_{\mathbb{R}^{D}}\beta dxdv:=M\equiv1,
\]
where $M$ is the (normalized) mass of the system, can be used to compute
$\eta$.

We have
\[
\beta\left(x,v\right)=F\left(\frac{1}{2}\left|v\right|^{2}+V\left(x\right)-\eta-\frac{1}{2}\frac{\triangle\sqrt{\varrho\left(x\right)}}{\sqrt{\varrho\left(x\right)}}\right),
\]
and therefore we obtain the following integral equation for $\varrho$:
\[
\varrho\left(x\right)=\int_{\mathbb{R}^{D}}F\left(\frac{1}{2}\left|v\right|^{2}+V\left(x\right)-\eta-\frac{1}{2}\frac{\triangle\sqrt{\varrho\left(x\right)}}{\sqrt{\varrho\left(x\right)}}\right)dv.
\]
Hence,
\[
\varrho\left(x\right)=A\left(V\left(x\right)-\eta-\frac{1}{2}\frac{\triangle\sqrt{\varrho\left(x\right)}}{\sqrt{\varrho\left(x\right)}}\right),
\]
from which we obtain the equation
\begin{equation}
-\frac{1}{2}\frac{\triangle\sqrt{\varrho\left(x\right)}}{\sqrt{\varrho\left(x\right)}}+V\left(x\right)-A^{-1}\left(\varrho\left(x\right)\right)=\eta.\label{eq:S1}
\end{equation}
along with
\begin{equation}
\int_{\mathbb{R}^{D}}\varrho\left(x\right)dx=1.\label{eq:C_S1}
\end{equation}

To proceed further, we now restrict our attention to probability measures. For the rest of this section, and for simplicity of notation, for any probability measure, $\mu$, let us define $\mathcal{F}\left(\mu\right)$ as one eighth of the Fisher information, i.e.,
\[
\mu\in\mathcal{P}\left(\mathbb{R}^{D}\right)\rightarrow\mathcal{F}\left(\mu\right):=\left\{ \begin{array}{ll}
{1 \over 2} \int_{\mathbb{R}^{D}} \bigl|\nabla ( \sqrt \varrho) \bigr|^2dx, & \textrm{if }\mu=\varrho\mathcal{L}^{D}, \; \hbox{and} \; \sqrt \varrho \in W^{1,2}(\mathbb R^D)\\
  & \\ 
\infty, & \textrm{otherwise}.
\end{array}\right.
\] 
The properties of $\mathcal F$ can also be studied through the convex lower semicontinuous function $L:\mathbb{R}\times\mathbb{R}^{D}\rightarrow\left[0,+\infty\right]$
defined by
\begin{equation}
L\left(\varrho,\xi\right):=\left\{ \begin{array}{cl}
\frac{\left|\xi\right|^{2}}{2\varrho}, & \textrm{if }\varrho>0\\
0, & \textrm{if }\xi=\vec{0}\textrm{ and }\varrho=0\\
\infty, & \textrm{if }(\xi\neq\vec{0}\textrm{ and }\varrho=0)\textrm{ or }\left(\varrho<0\right)\textrm{ or }\left(\varrho=\infty\right).
\end{array}\right.\label{eq:def_L}
\end{equation}
If $\mu\in\mathcal{P}\left(\mathbb{R}^{D}\right)$ then 
\begin{equation}\label{eq:fisher3}
\mathcal{F}\left(\mu\right):=\left\{ \begin{array}{ll}
\frac{1}{4}\int_{\mathbb{R}^{D}}L\left(\varrho,\nabla\varrho\right)dx, & \textrm{if }\mu=\varrho\mathcal{L}^{D},  \; \hbox{and} \; L\left(\varrho,\nabla\varrho\right) \in L^{1}(\mathbb R^D)\\
& \\
\infty & \textrm{otherwise},
\end{array}\right.
\end{equation}

\begin{remark}\label{re:properties-of-A} Since $F$ is monotone, its set of discontinuity is countable and will be denoted by $\{t_n\}_{n=1}^\infty$
\begin{enumerate} 
\item[(i)] The infimum of $F$ must be $0$, otherwise we would have $A \equiv \infty.$   
\item[(ii)] We exploit (i) and the dominated convergence theorem to obtain 
\[ 
\lim_{\alpha \rightarrow \infty} A(\alpha)= \lim_{\alpha \rightarrow \infty} \int_{\mathbb{R}^{D}}F\Bigl(\frac{1}{2}\left|v\right|^{2}+\alpha\Bigr)dv= \int_{\mathbb{R}^{D}} \biggl(\lim_{\alpha \rightarrow \infty} F\Bigl(\frac{1}{2}\left|v\right|^{2}+\alpha\Bigr)\biggr) dv=0.
\] 
\item[(iii)]  Let $\bar \alpha \in \mathbb R$ and denote by $S_r(0)$ the sphere of radius $r$ centered at the origin. If $r_n^2+2\bar \alpha=2 t_n$, then the union of $N(\bar \alpha):=\cup_{n=1}^\infty S_{r_n}(0)$ is a set of null Lebesgue measure and  
\[
\lim_{\alpha \rightarrow \bar \alpha} F\Bigl({|v|^2 \over 2}+ \alpha \Bigr)= F\Bigl({|v|^2 \over 2}+ \bar \alpha \Bigr)
\] 
for all $v \not\in N(\bar \alpha).$ Thus, as above, by the dominated convergence theorem, $\lim_{\alpha \rightarrow \bar \alpha} A(\alpha)=A(\bar \alpha).$ In other words, $A$ is continuous on $\mathbb R.$ 
\item[(iv)] Let $\lambda_0>0$ be the supremum of $F$. We have 
\begin{equation}\label{eq:lower-bound-A}
\liminf_{\alpha \rightarrow -\infty} {A(\alpha) \over (-\alpha)^D} \geq {\Bigl|\mathbb S^{D-1} \Bigr| \over 2^D}.
\end{equation} 
Hence, 
\begin{equation}\label{eq:lower-bound-A-second}
\lim_{\alpha \rightarrow -\infty} A(\alpha) =\infty.
\end{equation}
Indeed,  if $\alpha<-2$
\[
A(\alpha)= \Bigl|\mathbb S^{D-1} \Bigr| \int_0^\infty r^{D-1} F\Bigl({r^2 \over 2} +\alpha \Bigr) dr \geq  
\Bigl|\mathbb S^{D-1} \Bigr| \int_{-{\alpha \over 2}}^{-\alpha} r^{D-1} F\Bigl({r^2 \over 2} +\alpha \Bigr) dr.
\] 
Since $F$ decreases, we conclude that  
\[
A(\alpha) \geq  
\Bigl|\mathbb S^{D-1} \Bigr| \Bigl(-{\alpha \over 2} \Bigr)^{D} F\Bigl({\alpha^2 \over 2} +\alpha \Bigr) \geq 
\Bigl|\mathbb S^{D-1} \Bigr| \Bigl(-{\alpha \over 2} \Bigr)^{D} F\Bigl(-\alpha \Bigr), 
\] 
which implies (\ref{eq:lower-bound-A}). Thus, (\ref{eq:lower-bound-A-second}) holds.

\item[(v)]  By (i - iv), $A: \mathbb R \rightarrow (0,\infty)$ is a homeomorphism, and
\[ \lim_{s \rightarrow \infty}-A^{-1}(s)=\infty, \quad \lim_{s \rightarrow 0}-A^{-1}(s)=-\infty.\]   
\item[(vi)] Let $B \in C^1(0,\infty)$ be such that 
\begin{equation}\label{eq:property-of-B}
B'\left(s\right)=-A^{-1}\left(s\right).
\end{equation}  
Observe that since $-A^{-1}$ strictly increases, $B$ is strictly convex.

\item[(vii)] Let $b(s)=B'(s)$. Using first (v) and then (iv) we obtain    
\begin{equation}\label{eq:property-of-b-and0}
\limsup_{s \rightarrow \infty}{b(s) \over s^{1\over D}}= \limsup_{\alpha \rightarrow -\infty}\biggl({-\alpha \over A(\alpha)} \biggr)^{1\over D} \leq {2 \over \Bigl|\mathbb S^{D-1} \Bigr|^{1\over D}}=: {\bar \lambda_1 \over 2}.
\end{equation}  
Therefore, we can choose $T_1>1$ such that 
\begin{equation}\label{eq:property-of-b-and0.2}
0< b(s) \leq \bar \lambda_1 s^{1\over D} 
\end{equation}  
for all $s \in [T_1, \infty)$. Since $b(s)$ increases as $s$ increases, setting $\bar \lambda_2:=b(T_1)>0$ we have 
\begin{equation}\label{eq:property-of-b-and0.3}
b(s) \leq \bar \lambda_1 s^{1\over D} +\bar \lambda_2 
\end{equation} 
for any $s \in (0,\infty).$
\item[(viii)] Suppose that $\lim_{s \rightarrow 0^+}B(s)$ exists. Since $B$ is defined up to additive constant, we can set $B(0)=0$ such that   
\begin{equation}\label{eq:property-of-b-and2}
B(t)=\int_0^t b(s) ds.
\end{equation} 
By (\ref{eq:property-of-b-and0.3}),  
\begin{equation}\label{eq:property-of-b-and2.1}
sb(s)  \leq \bar \lambda_1 s^{1 +{1 \over D}} + \bar \lambda_2 s
\end{equation} 
and 
\begin{equation}\label{eq:property-of-B3}
B(s) \leq \lambda_1  \Bigl(  s^{1 +{1 \over D} }+  s \Bigr),
\end{equation}  
for any $s \in (0,\infty).$ We have set 
\[
\lambda_1:= \max\{\bar \lambda_1, \bar \lambda_2  \}.
\]

\end{enumerate} 
\end{remark}
%
%

\begin{lemma}\label{le:properties-of-b} Suppose that $b$ and  $B$ are as in Remark \ref{re:properties-of-A} and $B(0)=0.$  Then

\begin{enumerate} 
\item[(i)]   the infimum of $B(s)$ is finite.
\item[(ii)] The infimum of $sb(s)$  is finite and for any $s>0$ we have 
\[
sb_-(s) \leq B_-(s).
\]
\end{enumerate}  
\end{lemma}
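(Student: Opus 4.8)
The plan is to exploit the two structural facts we have about $B$: it is strictly convex on $(0,\infty)$ with derivative $b = B' = -A^{-1}$, and $A$ is a homeomorphism of $\mathbb R$ onto $(0,\infty)$ with the asymptotics recorded in Remark \ref{re:properties-of-A}(v). In particular $b(s) = -A^{-1}(s) \to -\infty$ as $s \to 0^+$ and $b(s) \to +\infty$ as $s \to \infty$, so $b$ vanishes at exactly one point $s_* \in (0,\infty)$ (using that $b$ is strictly increasing, hence injective). For (i): since $B$ is convex and differentiable on $(0,\infty)$ with $B'(s_*) = b(s_*) = 0$, the point $s_*$ is a global minimizer of $B$ on $(0,\infty)$, so $\inf_{s>0} B(s) = B(s_*)$, a finite real number; and since we have normalized $B(0)=0$ this also controls behavior near the endpoint. (If one wants $B$ defined and finite at $0$, the growth bound (\ref{eq:property-of-B3}) shows $B(s)\to 0$ as $s\to 0^+$, so no new infimum is introduced there.) This settles (i).

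For (ii), first note the growth bound (\ref{eq:property-of-b-and2.1}), $s b(s) \le \bar\lambda_1 s^{1+1/D} + \bar\lambda_2 s$, gives an \emph{upper} bound on $s b(s)$ for large $s$, but what we need is that $sb(s)$ is bounded \emph{below}. The negative part $s b_-(s)$ is supported on $(0,s_*)$, where $b(s) < 0$. On this bounded interval $s$ is bounded, so it suffices to show $b_-(s) = -b(s) = A^{-1}(s)$ is integrably controlled near $0$ — in fact we will show $s b_-(s)$ is bounded. Here I would compare with $B$: for $s \in (0, s_*)$, convexity of $B$ together with $B(0)=0$ gives, for the chord from $0$ to $s$, the standard inequality
\[
B(s) - B(0) \ge B'(s)\,(s - 0) \quad\Longleftrightarrow\quad B(s) \ge s\, b(s),
\]
wait — convexity gives $B(0) \ge B(s) + B'(s)(0-s)$, i.e. $0 \ge B(s) - s b(s)$, so actually $s b(s) \ge B(s)$ on all of $(0,\infty)$. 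Restricting to $s \in (0,s_*)$ where $b(s) \le 0$: multiplying by $-1$, $s b_-(s) = -s b(s) \le -B(s) = B_-(s)$ (since $B(s) \le B(s_*) \le 0$ is not guaranteed — but on $(0,s_*)$ we do have $B$ decreasing from $0$ toward $B(s_*)$, so $B(s) \le 0$, hence $B_-(s) = -B(s)$). This is exactly the claimed inequality $s b_-(s) \le B_-(s)$. Since $B_-(s) \le -\inf B = -B(s_*) < \infty$ by part (i), the infimum of $s b(s)$ is bounded below by $B(s_*)$ (using $sb(s)\ge B(s)\ge B(s_*)$), which is finite.

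The only real subtlety, and the step I would be most careful about, is the interplay between the two normalizations and the hypothesis ``$\lim_{s\to 0^+} B(s)$ exists'' in Remark \ref{re:properties-of-A}(viii): one must check that the convexity inequality $s b(s) \ge B(s)$, which is usually stated for $B$ finite at the point $0$, is legitimate here — this is fine because $B$ is convex and finite on the open interval $(0,\infty)$ and we only ever take limits of chords with endpoints in $(0,\infty)$, then let one endpoint tend to $0$ using continuity of $B$ at $0$ guaranteed by (\ref{eq:property-of-B3}). Everything else is the elementary calculus of a strictly convex function with a unique interior critical point. I would also double-check the sign bookkeeping in $b_-$, $B_-$ versus $b$, $B$, since the statement mixes the function and its negative part, and it is easy to flip an inequality; the safe route is to localize to $(0,s_*)$, where $b = -b_-$ and $B = -B_-$, turning the claim into the plain convexity estimate above.
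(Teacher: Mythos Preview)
Your proof is correct and follows essentially the same idea as the paper's. Both arguments rest on the single inequality $sb(s)\ge B(s)$ for all $s>0$, which is the convexity tangent-line estimate at $0$ using $B(0)=0$; the paper encodes this as $B^*\bigl(b(s)\bigr)\ge 0$ via the Young identity $sb(s)=B(s)+B^*\bigl(b(s)\bigr)$, while you write it directly as $B(0)\ge B(s)+B'(s)(0-s)$. For (i) the paper argues by continuity on a compact piece plus monotonicity beyond, whereas you locate the unique critical point $s_*$ --- these are equivalent. One small omission: you should state explicitly that for $s\ge s_*$ one has $b_-(s)=0$, so $sb_-(s)=0\le B_-(s)$ is automatic; you handle $(0,s_*)$ carefully but only allude to the complementary region.
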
 

\begin{proof}  If $B^*$ denotes the Legendre transform of $B$, then, by the fact that $B(0)=0$, we have 
\begin{equation}\label{eq:property-of-B4}
B^* \geq 0.
\end{equation} 

(i) Since by Remark \ref{re:properties-of-A} $\lim_{s \rightarrow \infty} b(s)=\infty$, there exists $s_0$ such that $b >0$ on $[s_0,\infty)$. Thus, $B$ is bounded below on  $[s_0,\infty)$ by $B(s_0).$ Since $B$ is continuous on $[0,s_0]$ we conclude that it is also bounded below there. Consequently, there exists $\lambda_b<0$ such that $B \geq -\lambda_b.$  

(ii) Let $s>0$ and set $\alpha=B'(s)=b(s).$ Since  
\[
sb_+(s)-sb_-(s)=sb(s)= B(s)+B^*(\alpha)=B_+(s)-B_-(s)+B^*(\alpha)
\] 
we conclude that 
\[
sb_-(s)+ B_+(s)+B^*(\alpha)= sb_+(s) +B_-(s).
\] 
Since by (\ref{eq:property-of-B4}) $B^* \geq 0$, we conclude the proof.
\end{proof}

\begin{example}Examples include  
\[
F(t)=e^{-t}, \quad A(t)= Ce^{-t}, \quad b(s)=\ln \bigl({s \over C}\bigr), \quad B(s)= s \ln \bigl({s \over C}\bigr) -s , 
\]
where
\[
C:=|\mathbb S^{D-1}| \int_0^\infty r^{D-1} e^{- r^2 \over 2} dr.
\]
\end{example}

In general, if $B$ satisfies (\ref{eq:property-of-B}), then, by Remark \ref{re:properties-of-A} (v), we have 
\begin{equation}\label{eq:properties-of-B}
\lim_{s \rightarrow \infty}{B(s) \over s}=\infty.
\end{equation}
We shall assume that 
\begin{equation}\label{eq:properties-of-B2}
B(0):=\lim_{s \rightarrow 0^+} B(s) \quad \hbox{exists}.
\end{equation} 

Define 
\[ 
\varrho_\infty:=A\left(V \right) 
\] 
and assume that 
\begin{equation}\label{eq:properties-of-rho-m}
\varrho_\infty \mathcal L^D \in \mathcal P_2(\mathbb R^D) \quad \hbox{and} \quad  f^\infty:=B(\varrho_\infty)+ \varrho_\infty V \in L^1(\mathbb R^D).  
\end{equation}

\begin{remark}\label{re:convexity} By the convexity of $B$, $B(s) \geq B(s_0)+b(s_0)(s-s_0)$  and if $s>0$, then $B'(s)s=B(s)+B^*\bigl(B'(s) \bigr).$ Hence, 
\begin{enumerate} 
\item[(i)] if $\varrho: \mathbb R^D \rightarrow [0,\infty]$ is a Borel function 
\[
B(\varrho)+V \varrho \geq B(\varrho_\infty)+V \varrho_\infty +\Bigl(b(\varrho_\infty)+V\Bigr)(\varrho-\varrho_\infty)=B(\varrho_\infty)+V \varrho_\infty=  f^\infty.
\]
Consequently, due to (\ref{eq:properties-of-rho-m}), 
\begin{equation}\label{eq:upper-bound}
\Bigl(B(\varrho)+V \varrho\Bigr)_-\leq f^\infty_-  \quad \hbox{and} \quad  B_-(\varrho) \leq V \varrho -f^\infty.
\end{equation}
Hence, the functional  
\[
P\left(\varrho\right):=\int_{\mathbb{R}^{D}}\left(V\varrho+B\left(\varrho\right)\right)dx.
\] 
is meaningful and achieves its minimum at $\varrho_\infty.$    
\item[(ii)] We use the first inequality in (i) to conclude that for $\varrho>0$ we have   
\[
B'(\varrho) \varrho + V \varrho = B(\varrho)+ B^* \bigl(B'(\varrho) \bigr)+ V \varrho  \geq f^\infty + B^* \bigl(B'(\varrho) \bigr).
\] 
\item[(iii)] In particular, a consequence of (ii) is that, since $B(0)=0$ implies $B^* \geq 0$, Lemma \ref{le:properties-of-b} and (\ref{eq:upper-bound}) imply 
\[
\varrho b_-(\varrho) \leq V \varrho - f^\infty. 
\]
\end{enumerate}  
\end{remark}

\begin{lemma}\label{lem:upperbound} Let $\varrho: \mathbb R^D \rightarrow [0,\infty]$ be a Borel function. Then 
\begin{enumerate} 
\item[(i)] 
\[
|B(\varrho)| \leq \lambda_1 \Bigl(\varrho^{1+{1 \over D}}+ \varrho \Bigr)+V \varrho-f^\infty.
\]
\item[(ii)] At the point where $\varrho>0$, we have 
\[
\varrho |b(\varrho)| \leq \bar \lambda_1 \varrho^{1+{1 \over D}}+ \bar \lambda \varrho+V \varrho-f^\infty
\]
\end{enumerate}  
\end{lemma}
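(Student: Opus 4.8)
The plan is to prove both bounds by combining three ingredients already assembled: the two-sided growth estimate $B(s)\le\lambda_1(s^{1+1/D}+s)$ from Remark \ref{re:properties-of-A}(viii), the one-sided bound $B_-(\varrho)\le V\varrho-f^\infty$ and $\varrho b_-(\varrho)\le V\varrho-f^\infty$ from Remark \ref{re:convexity}(i) and (iii), and the decomposition $|g|=g+2g_-$ valid for any real number $g$ (writing $g_-=\max\{-g,0\}\ge 0$, so $g=g_+-g_-$ and $|g|=g_++g_-=g+2g_-$). For part (i), at a point where $\varrho>0$ we write $|B(\varrho)|=B(\varrho)+2B_-(\varrho)$; the first term is bounded above by $\lambda_1(\varrho^{1+1/D}+\varrho)$ using \eqref{eq:property-of-B3}, and the second by $2(V\varrho-f^\infty)$ using the second inequality in \eqref{eq:upper-bound}. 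This would give a factor $2$ in front of $V\varrho-f^\infty$; to match the stated inequality one uses instead the cruder estimate $|B(\varrho)|\le B_+(\varrho)+B_-(\varrho)$ where $B_+(\varrho)\le B(\varrho)+B_-(\varrho)$, and organizes the algebra so that, after invoking \eqref{eq:property-of-B3} for $B(\varrho)\le B_+(\varrho)$ directly (note $B(s)\le B_+(s)$ only when $B(s)\ge 0$, so one splits into the cases $B(\varrho)\ge 0$ and $B(\varrho)<0$). In the case $B(\varrho)\ge 0$, $|B(\varrho)|=B(\varrho)\le\lambda_1(\varrho^{1+1/D}+\varrho)$ and we may add the nonnegative quantity $V\varrho-f^\infty$ (nonnegative by Remark \ref{re:convexity}, since $B_-(\varrho)\ge0$) freely. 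In the case $B(\varrho)<0$, $|B(\varrho)|=B_-(\varrho)\le V\varrho-f^\infty$, and we may add the nonnegative $\lambda_1(\varrho^{1+1/D}+\varrho)$. At the point where $\varrho=0$ both sides vanish (recall $B(0)=0$ and $f^\infty_-\le 0$ is not needed since $V\cdot 0-f^\infty(x)=-f^\infty(x)$ could be negative — but then $B_-(0)=0\le -f^\infty$ fails unless $f^\infty\le0$; so one actually invokes the first inequality of \eqref{eq:upper-bound}, $(B(\varrho)+V\varrho)_-\le f^\infty_-$, which at $\varrho=0$ reads $f^\infty_-\le f^\infty_-$, and handles the endpoint via continuity of $B$ at $0$).

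For part (ii) the argument is the same with $B$ replaced by $\varrho\,b(\varrho)=\varrho B'(\varrho)$: at a point with $\varrho>0$ write $\varrho|b(\varrho)|=\varrho b(\varrho)+2\varrho b_-(\varrho)$. The upper bound $\varrho b(\varrho)\le\bar\lambda_1\varrho^{1+1/D}+\bar\lambda_2\varrho$ is exactly \eqref{eq:property-of-b-and2.1}, and $\varrho b_-(\varrho)\le V\varrho-f^\infty$ is Remark \ref{re:convexity}(iii). Absorbing the constants into a single $\bar\lambda$ (e.g.\ $\bar\lambda:=\max\{\bar\lambda_2, 2\}$ or whatever the paper's $\bar\lambda$ is declared to be — I would set $\bar\lambda:=\bar\lambda_2+2$ and, as in part (i), split on the sign of $b(\varrho)$ to avoid the spurious factor $2$) yields the claimed inequality $\varrho|b(\varrho)|\le\bar\lambda_1\varrho^{1+1/D}+\bar\lambda\varrho+V\varrho-f^\infty$.

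The only genuine subtlety — and the step I would be most careful about — is the bookkeeping of signs so that the coefficient of $V\varrho-f^\infty$ comes out as exactly $1$ rather than $2$, and the treatment of the degenerate set $\{\varrho=0\}$ together with the possibility that $f^\infty(x)$ is not sign-definite pointwise (it is only globally integrable, by \eqref{eq:properties-of-rho-m}). Both issues are resolved by the case split on the sign of $B(\varrho)$ (resp.\ $b(\varrho)$): on $\{B(\varrho)\ge0\}$ one uses only \eqref{eq:property-of-B3} and the nonnegativity of $V\varrho-f^\infty$ (which holds by Remark \ref{re:convexity}(i), since $B(\varrho)+V\varrho\ge f^\infty$ gives $V\varrho-f^\infty\ge-B(\varrho)$, and here $-B(\varrho)\le0$, so $V\varrho-f^\infty\ge-B(\varrho)$ need not be $\ge0$ — hence one instead keeps $-B(\varrho)$ on the right and bounds $|B(\varrho)|=B(\varrho)\le\lambda_1(\varrho^{1+1/D}+\varrho)\le\lambda_1(\varrho^{1+1/D}+\varrho)+(V\varrho-f^\infty+B(\varrho))$, the added term being $\ge0$ by Remark \ref{re:convexity}(i)). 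On $\{B(\varrho)<0\}$ one uses only \eqref{eq:upper-bound}. Everything else is a routine assembly of inequalities proved earlier in the section.
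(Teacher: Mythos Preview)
Your proposal is correct and uses exactly the same ingredients as the paper --- \eqref{eq:property-of-B3} and \eqref{eq:upper-bound} for (i), \eqref{eq:property-of-b-and2.1} and Remark~\ref{re:convexity}(iii) for (ii) --- but you have made the argument much harder than it needs to be. The identity $|g|=g_+ + g_-$ (not $g+2g_-$, and not an inequality) disposes of all your worries about the factor $2$ and the sign of $V\varrho-f^\infty$ in one stroke: since $B(\varrho)\le \lambda_1(\varrho^{1+1/D}+\varrho)$ and the right side is nonnegative, you get $B_+(\varrho)\le \lambda_1(\varrho^{1+1/D}+\varrho)$; since $B_-(\varrho)\le V\varrho-f^\infty$ by \eqref{eq:upper-bound}, adding gives (i) directly with coefficient $1$. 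No case split is needed, and the point $\varrho=0$ is covered automatically (indeed $B(\varrho)+V\varrho\ge f^\infty$ at $\varrho=0$ gives $f^\infty\le 0$, so the right side is $-f^\infty\ge 0=|B(0)|$). Part (ii) is identical with $\varrho b_\pm(\varrho)$ in place of $B_\pm(\varrho)$, and the constant $\bar\lambda$ in the statement is just $\bar\lambda_2$.
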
 

\begin{proof} We combine (\ref{eq:property-of-B3}) and (\ref{eq:upper-bound}) to obtain (i).  The proof of (ii) follows by combining (\ref{eq:property-of-b-and0.3}) and Remark \ref{re:convexity}. 
\end{proof}

Now define the functional $E: \mathcal P(\mathbb R^D) \rightarrow (-\infty, \infty]$ by 
\begin{equation}
E\left(\mu\right):=\left\{ \begin{array}{ll}
\mathcal{F}\left(\mu\right)+P(\varrho), & \textrm{if }\mu=\varrho\mathcal{L}^{D},\\
\infty, & \textrm{otherwise.}
\end{array}\right.\label{eq:func_E}
\end{equation}

\begin{lemma}\label{lem:conv} Assume (\ref{eq:properties-of-rho-m}) holds. On its proper domain, the functional $E$ defined in (\ref{eq:func_E}) is strictly convex and bounded below. Furthermore, $E$ is lower semicontinuous for the narrow convergence on $\mathcal P(\mathbb R^D).$
\end{lemma}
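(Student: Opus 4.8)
The plan is to verify the three assertions — strict convexity, boundedness below, and lower semicontinuity — by decomposing $E = \mathcal F + P$ and treating each summand with tools already assembled in the excerpt.

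\emph{Boundedness below.} This is the easiest part. By definition $\mathcal F(\mu)\ge 0$ always, and by Remark~\ref{re:convexity}(i) the functional $P(\varrho)=\int_{\mathbb R^D}(V\varrho+B(\varrho))\,dx$ is meaningful and achieves its minimum at $\varrho_\infty$, so $P(\varrho)\ge P(\varrho_\infty)=\int_{\mathbb R^D} f^\infty\,dx$, which is finite by hypothesis \eqref{eq:properties-of-rho-m}. Hence $E(\mu)\ge \int f^\infty\,dx > -\infty$ on the proper domain (and $E=+\infty$ off it), so $E$ is bounded below.

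\emph{Strict convexity.} I would show both summands are convex along geodesics and that $\mathcal F$ is strictly so on its domain. For $\mathcal F$: writing $\mathcal F(\mu)=\tfrac14\int L(\varrho,\nabla\varrho)\,dx$ with $L$ the convex lsc integrand \eqref{eq:def_L}, the displacement convexity of the Fisher information is classical (McCann's condition / the results in \cite{gianazza2009wasserstein,matthes2009family} cited right after \eqref{eq:Fisher}); strictness comes from the strict convexity of $s\mapsto|\xi|^2/(2s)$ in the density variable, which forces any two distinct absolutely continuous measures with finite Fisher information to have a geodesic midpoint with strictly smaller value unless they coincide. For $P$: the internal-energy functional $\varrho\mapsto\int B(\varrho)\,dx$ is displacement convex because $B$ satisfies McCann's condition — $s\mapsto s^D B(s^{-D})$ is convex nonincreasing, which follows from \eqref{eq:property-of-B3} together with $B$ being convex (Remark~\ref{re:properties-of-A}(vi)) — and the potential term $\varrho\mapsto\int V\varrho\,dx$ is linear, hence displacement convex, provided $V$ is convex; here one uses the standing regularity/convexity hypotheses on $V$. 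Adding a convex functional to a strictly convex one on the common (proper) domain yields strict convexity of $E$. One must also check the proper domain is geodesically convex, i.e.\ that geodesics between two absolutely continuous measures of finite energy stay absolutely continuous; this again is McCann's displacement-convexity machinery.

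\emph{Lower semicontinuity for narrow convergence.} Let $\mu_k\rightharpoonup\mu$ narrowly with $\sup_k E(\mu_k)<\infty$; I must show $E(\mu)\le\liminf_k E(\mu_k)$. The Fisher-information part is lsc for narrow convergence — this is standard (e.g.\ via \eqref{eq:fisher3} and lower semicontinuity of integral functionals with convex integrand $L$ under weak convergence of $(\varrho_k,\nabla\varrho_k)$, after noting the energy bound forces $\sqrt{\varrho_k}$ bounded in $W^{1,2}$ and hence $\varrho_k\to\varrho$ strongly enough in $L^1_{\mathrm{loc}}$ along a subsequence). For $P$, the potential term $\int V\varrho_k\,dx$ needs care since $V$ need only be bounded below, not continuous-bounded; I would split $V=V_+-V_-$, use Fatou on the nonnegative part and the energy bound plus uniform integrability (coming from the superlinear growth of $B$, cf.\ \eqref{eq:properties-of-B}) to control $\int V_-\varrho_k$. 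For $\int B(\varrho_k)\,dx$, the superlinearity \eqref{eq:properties-of-B} gives equi-integrability of $\{\varrho_k\}$, so $\varrho_k\to\varrho$ in the sense needed to apply lower semicontinuity of the convex integral functional $\varrho\mapsto\int B(\varrho)\,dx$ (using the lower bound \eqref{eq:upper-bound} to legitimize Fatou-type arguments).

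\emph{Main obstacle.} The routine parts are boundedness below and the convexity of the separate pieces. The delicate point is the lower semicontinuity of $P$ under merely narrow convergence when $V$ is only bounded below and possibly unbounded above: one genuinely needs to extract, from the energy bound $\sup_k(\mathcal F(\mu_k)+P(\varrho_k))<\infty$, enough equi-integrability — via the superlinear growth of $B$ — to pass to the limit in $\int V\varrho_k$ and $\int B(\varrho_k)$ without losing mass at infinity. Getting this tightness argument clean, and simultaneously handling the subtle interaction between the (possibly) negative part of $V\varrho$ and the negative part of $B(\varrho)$ as quantified in \eqref{eq:upper-bound} and Lemma~\ref{lem:upperbound}, is where the real work lies.
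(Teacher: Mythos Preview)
Your plan contains a genuine misreading of what ``strictly convex'' means here, and this leads to a real gap.

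\textbf{The convexity in the lemma is linear, not geodesic.} The paper's proof establishes ordinary convexity with respect to the flat $L^1$ structure on densities: since $L(\varrho,\xi)$ is jointly convex, $\mathcal F(\mu)=\tfrac14\int L(\varrho,\nabla\varrho)\,dx$ is convex under linear interpolation of densities; since $B$ is \emph{strictly} convex (Remark~\ref{re:properties-of-A}(vi)), $P$ is strictly convex. That is all. Your attempt to prove displacement convexity instead runs into an obstruction you yourself flag: the potential term $\varrho\mapsto\int V\varrho\,dx$ is displacement convex only if $V$ is convex, and there is \emph{no} convexity hypothesis on $V$ anywhere in the paper---$V$ is merely Borel and bounded below. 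So your argument for convexity of $P$ along Wasserstein geodesics does not go through, and neither does the McCann-condition check for $B$, which you assert but do not verify. The intended (and much easier) notion is linear convexity; note it suffices for the one application in Theorem~\ref{thm:var_eqn}, uniqueness of the minimizer.

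\textbf{On lower semicontinuity you are over-engineering.} You correctly identify the difficulty (passing to the limit in $\int V\varrho_k$ and $\int B(\varrho_k)$ with $V$ unbounded above) but propose to resolve it by splitting $V=V_+-V_-$ and extracting equi-integrability from superlinear growth of $B$. The paper bypasses all of this with a single observation: by Remark~\ref{re:convexity}(i), the \emph{combined} integrand satisfies $B(\varrho)+V\varrho\ge f^\infty$ pointwise, so $B(\varrho_n)+V\varrho_n+f^\infty_-\ge 0$. Meanwhile the finite Fisher-information bound forces $\sqrt{\varrho_n}$ bounded in $W^{1,2}$, hence (up to subsequences) $\varrho_n\to\varrho$ a.e.; one then applies Fatou's lemma directly to the grouped nonnegative integrand. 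No splitting, no tightness estimate, no separate control of negative parts. Your route could probably be made to work, but it misses the point of having proved \eqref{eq:upper-bound} in the first place.
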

\begin{proof} As $\mathcal F \geq 0$, we use Remark \ref{re:convexity} to conclude that $E\left(\mu\right)\geq P(\varrho_\infty)$. Furthermore, we use (\ref{eq:upper-bound}) to conclude that the proper domain of $E$ is the intersection of the proper domains of $\mathcal F$ and $P.$ The strict convexity of $B$ implies that of $P$ on its proper domain. 

To show that $E$ is lower semicontinuous for the narrow convergence on $\mathcal P(\mathbb R^D)$ it suffices to show that $\mathcal F$ and $P$ are both lower semicontinuous. Let $(\mu_n)_n \subset \mathcal P(\mathbb R^D)$ be a sequence that converges to $\mu$ narrowly and assume that  
\[
\sup_n E(\mu_n)<\infty.
\] 
By Lemma 2.2 of \cite{matthes2009family}, there exist $\varrho_n: \mathbb R^D \rightarrow [0,\infty]$ and $\varrho: \mathbb R^D \rightarrow [0,\infty]$ such that 
\[
\mu_n=\varrho_n \mathcal L^D, \quad \mu=\varrho \mathcal L^D, \quad \varrho_n, \varrho \in W^{1,1}_{loc}(\mathbb R^D), \quad {|\nabla \varrho_n| \over \sqrt \varrho_n}, {|\nabla \varrho| \over \sqrt \varrho} \in L^2(\mathbb R^D),
\] 
\begin{equation}\label{eq:upper-bound2}
\liminf_n \mathcal F(\mu_n) \geq \mathcal F(\mu),
\end{equation}
$(\sqrt \varrho_n)_n$ converges to $\sqrt \varrho$, strongly in $L^2(\mathbb R^D)$ and weakly in $W^{1,2}(\mathbb R^D).$ Thus, every subsequence of $(\varrho_n)_n$ admits itself a subsequence which converges  almost everywhere to $\varrho$. By (\ref{eq:upper-bound}), $B(\varrho_n)+V\varrho_n+f^\infty_- \geq 0$. Therefore, we can apply Fatou's Lemma to obtain
\[
\liminf_{n \rightarrow \infty} \int_{\mathbb R^D} \Bigl(B(\varrho_n)+V\varrho_n+f^\infty_- \Bigr) dx \geq 
\int_{\mathbb R^D} \Bigl(B(\varrho)+V\varrho+f^\infty_- \Bigr) dx.
\]
Then, 
\begin{equation}\label{eq:upper-bound3}
\liminf_n P(\varrho_n) \geq P(\varrho). 
\end{equation}
By (\ref{eq:upper-bound2}) and (\ref{eq:upper-bound3}), $E$ is lower semicontinuous.

Convexity of $\mathcal F$ follows from that of $L.$ Consequently, $E$ is strictly convex on its proper domain. 

\end{proof}

We shall see that solutions of (\ref{eq:S1}) can be obtained by minimizing $E$.

\begin{remark}\label{re:narrow-compact}
Recall that a set $\mathcal{K}\subset\mathcal{P}\left(\mathbb{R}^{D}\right)$ is tight if
\begin{equation}
\forall\varepsilon>0\quad\exists K_{\varepsilon}\textrm{ compact in }\mathbb{R}^{D}\textrm{ such that}\quad\mu\left(\mathbb{R}^{D}\backslash K_{\varepsilon}\right)\leq\varepsilon\quad\forall\mu\in\mathcal{K}.\label{eq:tig_cond}
\end{equation}
Moreover, it can be verified that (\ref{eq:tig_cond}) is equivalent
to the following integral condition (cf. Remark 5.1.5 in \cite{ambrosio2008gradient}):
there exists a function $\vartheta:\mathbb{R}^{D}\rightarrow\left[0,+\infty\right]$,
whose sublevels $\left\{ x\in\mathbb{R}^{D}\left|\vartheta\left(x\right)\leq c\right.\right\} $
are compact in $\mathbb{R}^{D}$, such that
\[
\underset{\mu\in\mathcal{K}}{\sup}\int_{\mathbb{R}^{D}}\vartheta\left(x\right)d\mu\left(x\right)<+\infty.
\]
\end{remark}

\begin{lemma}\label{lem:phi_V} Consider a strictly convex function $B:\mathbb R \rightarrow\left[0,+\infty\right]$, with $B\left(\infty\right)=\infty$ and differentiable on $(0,\infty)$. Suppose there are strictly positive Borel functions $\varrho_{\infty,\alpha}$ and $\varrho_{\infty}$ such that, on the set where these expressions are positive, we have 
\begin{equation}
-B'\left(\varrho_{\infty} \right)=V \label{eq:r1}
\end{equation}
and for some $0<\alpha<1$ 
\begin{equation}
-B'\left(\varrho_{\infty,\alpha} \right)=\alpha V,\label{eq:r2}
\end{equation}
and 
\begin{equation}
B\left(\varrho_{\infty,\alpha}\left(x\right)\right)+\alpha V\left(x\right)\varrho_{\infty,\alpha}\left(x\right)\in L^{1}\left(\mathbb{R}^{D}\right).\label{eq:C_K1} 
\end{equation} 
Assume $V: \mathbb R^D  \rightarrow \mathbb R$ is a Borel function which satisfies  
\begin{equation}\label{eq:growth-on-V}
\underset{\left|x\right|\rightarrow\infty}{\lim}V\left(x\right)=+\infty
\end{equation}
and there exists $\underline{V}\in\mathbb{R}$ such that $V\left(x\right)\geq\underline{V}$ for almost every $x\in\mathbb{R}^{D}.$ For any $K>0$ there exists a constant $\tilde K>0$ such that if $\varrho\in L^{1}\left(\mathbb{R}^{D}\right)$ is nonnegative and 
\begin{equation}\label{eq:as_1}
\int_{\mathbb{R}^{D}}\left(B\left(\varrho\left(x\right)\right)+V\left(x\right)\varrho\left(x\right)\right)dx\leq K,
\end{equation}
then,
\[
\int_{\mathbb{R}^{D}}V\left(x\right)\varrho\left(x\right)dx\leq\tilde{K}.
\]
\end{lemma}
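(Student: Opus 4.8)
The plan is to exploit the strict convexity of $B$ together with the comparison density $\varrho_{\infty,\alpha}$, for which $-B'(\varrho_{\infty,\alpha})=\alpha V$, in order to bound $V\varrho$ from above, pointwise, by the density $B(\varrho)+V\varrho$ plus a fixed $L^1(\mathbb{R}^D)$ function, up to the multiplicative loss $1-\alpha$; integrating against \eqref{eq:as_1} then finishes the argument.

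\emph{Step 1.} I would first record a pointwise supporting-line inequality for $B$. Since $\varrho_{\infty,\alpha}(x)\in(0,\infty)$, where $B$ is differentiable, convexity of $B$ gives, for a.e.\ $x\in\mathbb{R}^D$ and every nonnegative value $\varrho(x)$,
\begin{equation*}
B(\varrho(x))\ \ge\ B(\varrho_{\infty,\alpha}(x))+B'(\varrho_{\infty,\alpha}(x))\bigl(\varrho(x)-\varrho_{\infty,\alpha}(x)\bigr)\ =\ h(x)-\alpha V(x)\varrho(x),
\end{equation*}
where $h:=B(\varrho_{\infty,\alpha})+\alpha V\varrho_{\infty,\alpha}\in L^1(\mathbb{R}^D)$ by \eqref{eq:C_K1} and I have used \eqref{eq:r2}; the inequality holds trivially where $B(\varrho(x))=+\infty$ and, where $\varrho(x)=0$, reduces to convexity of $B$ on $[0,\infty)$. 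Rearranging and adding $(1-\alpha)V\varrho$ to both sides,
\begin{equation*}
(1-\alpha)\,V(x)\varrho(x)\ \le\ B(\varrho(x))+V(x)\varrho(x)-h(x)\qquad\text{for a.e.\ }x\in\mathbb{R}^D.
\end{equation*}

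\emph{Step 2.} I would then integrate this inequality over $\mathbb{R}^D$. Every term has an integrable negative part: $(1-\alpha)V\varrho\ge-(1-\alpha)|\underline{V}|\varrho$, and $B(\varrho)+V\varrho\ge V\varrho\ge-|\underline{V}|\varrho$ because $B\ge0$ and $V\ge\underline{V}$ almost everywhere, while $-h\in L^1(\mathbb{R}^D)$; here the assumption $\varrho\in L^1(\mathbb{R}^D)$ is used essentially. Hence all the integrals below are well defined in $(-\infty,+\infty]$, the integral is additive and monotone on this class, and \eqref{eq:as_1} yields
\begin{equation*}
(1-\alpha)\int_{\mathbb{R}^D}V\varrho\,dx\ \le\ \int_{\mathbb{R}^D}\bigl(B(\varrho)+V\varrho\bigr)\,dx-\int_{\mathbb{R}^D}h\,dx\ \le\ K+\|h\|_{L^1(\mathbb{R}^D)}.
\end{equation*}
Dividing by $1-\alpha>0$ gives the assertion with $\tilde{K}:=\bigl(K+\|h\|_{L^1(\mathbb{R}^D)}\bigr)/(1-\alpha)$, which depends only on $K$ and the fixed data $B$, $V$, $\alpha$, $\varrho_{\infty,\alpha}$.

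The only point that needs real care — rather than a genuine obstacle — is the bookkeeping with extended-real-valued integrals: one must make sure that neither $\int(B(\varrho)+V\varrho)$ nor $\int V\varrho$ is an indeterminate expression of the form $\infty-\infty$, which is precisely what the lower bound $V\ge\underline{V}$ a.e.\ and the integrability $\varrho\in L^1(\mathbb{R}^D)$ guarantee. A minor secondary check is the validity of the supporting-line inequality at the points where $\varrho(x)=0$, which follows in one line from convexity of $B$ on $[0,\infty)$ (and is automatic when $B(0)=+\infty$). I do not anticipate any further difficulty.
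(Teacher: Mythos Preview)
Your proof is correct and follows essentially the same approach as the paper: both use the supporting-line inequality for $B$ at $\varrho_{\infty,\alpha}$ together with \eqref{eq:r2} to obtain $(1-\alpha)V\varrho \le B(\varrho)+V\varrho - h$ pointwise, and then integrate using \eqref{eq:as_1} and \eqref{eq:C_K1}. Your treatment is in fact slightly more careful than the paper's in checking that all negative parts are integrable so that the integrals are well defined in $(-\infty,+\infty]$.
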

\begin{proof} If (\ref{eq:as_1}) holds, then 
\begin{align}
K\geq &  \int_{ \mathbb R^D }\left(B\left(\varrho\left(x\right)\right)+\alpha V\left(x\right)\varrho\left(x\right)+\left(1-\alpha\right)V\left(x\right)\varrho\left(x\right)\right)dx\nonumber \\
= & \int_{ \mathbb R^D }\left(B\left(\varrho\left(x\right)\right)-B'\left(\varrho_{\infty,\alpha}\left(x\right)\right)\varrho\left(x\right)+\left(1-\alpha\right)V\left(x\right)\varrho\left(x\right)\right)dx,\label{eq:inq_K_1}
\end{align}
where we used (\ref{eq:r2}) for the last expression. Since 
\[
B\left(\varrho \right)\geq B\left(\varrho_{\infty,\alpha} \right)+B'\left(\varrho_{\infty,\alpha} \right)\left(\varrho -\varrho_{\infty,\alpha} \right), 
\]
(\ref{eq:inq_K_1}) implies 
\begin{align}
K\geq & \int_{\mathbb{R}^{D}}\left(B\left(\varrho_{\infty,\alpha}\left(x\right)\right)-B'\left(\varrho_{\infty,\alpha}\left(x\right)\right)\varrho_{\infty,\alpha}\left(x\right)+\left(1-\alpha\right)V\left(x\right)\varrho\left(x\right)\right)dx\nonumber \\
= & \int_{\mathbb{R}^{D}}\left(B\left(\varrho_{\infty,\alpha}\left(x\right)\right)+\alpha V\left(x\right)\varrho_{\infty,\alpha}\left(x\right)+\left(1-\alpha\right)V\left(x\right)\varrho\left(x\right)\right)dx\nonumber \\
= & C+\left(1-\alpha\right)\int_{\mathbb{R}^{D}}V\left(x\right)\varrho\left(x\right)dx,\label{eq:in_K_2}
\end{align}
where, due to (\ref{eq:C_K1}), we have set 
\[
C:=\int_{\mathbb{R}^{D}}\left(B\left(\varrho_{\infty,\alpha}\left(x\right)\right)+\alpha V\left(x\right)\varrho_{\infty,\alpha}\left(x\right)\right)dx.
\]
By (\ref{eq:in_K_2}), 
\[
\int_{\mathbb{R}^{D}}V\left(x\right)\varrho\left(x\right)dx\leq\frac{K-C}{1-\alpha}=:\tilde{K}.
\]
\end{proof}

\begin{remark}
Let $F\left(s\right)=e^{-s}$, which implies $B\left(s\right)=s\ln s$. Then, all the assumptions
in Lemma \ref{lem:phi_V} are satisfied if we have $e^{-\alpha V\left(x\right)}\in L^{1}\left(\mathbb{R}^{D}\right)$
for some $0<\alpha<1$.
\end{remark}

\begin{theorem} \label{thm:var_eqn} Assume $V: \mathbb R^D \rightarrow \mathbb R$ is a Borel function, bounded below and satisfying (\ref{eq:growth-on-V}). Suppose $F:\mathbb{R}\rightarrow\mathbb{R}^{+}$ is strictly decreasing and is such that for any $\alpha \in \mathbb R$ the function in (\ref{eq:defn-of-A}) assumes only finite values. Suppose further that $B \in C^1(0,\infty) \cap C\bigl([0,\infty) \bigr)$ is such that $B'=-A^{-1}$, $B(0)=0$ and (\ref{eq:properties-of-B2}) holds. Finally, assume that $\lim_{s \rightarrow 0} sB'(s)=0,$   $B(\varrho_\infty)+V \varrho_\infty \in L^1(\mathbb R^D)$,  and (\ref{eq:C_K1}) holds. If $E\not\equiv\infty$, then the minimization problem
\[
\underset{\mu\in\mathcal{P}_{2}\left(\mathbb{R}^{D}\right)}{\mathrm{argmin}}E\left(\mu\right),
\]
has a unique solution, $\mu_s=\varrho_{s}\mathcal{L}^{D}$. Setting 
\[
 \eta_s:= {1 \over 2} ||\nabla  \sqrt {\varrho_s}||_{L^2}^2+ \int_{\mathbb R^D} \bigl(B'(\varrho_s)+V \bigr) \varrho_s dx, 
 \] 
we have in the weak sense  
\begin{equation}\label{eq:EulerLagrange}
-{1 \over 2} \triangle \varrho_s + |\nabla \sqrt \varrho_s|^2   + 2 \bigl(B'(\varrho_s)+V \bigr) \varrho_s= \eta_s \varrho_s,
\end{equation}
which can be interpreted as (\ref{eq:S1}) 
\end{theorem}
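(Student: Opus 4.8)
The plan is to produce $\mu_s$ by the direct method of the calculus of variations and then to characterise it through a first variation along mass‑preserving perturbations.

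\emph{Existence and uniqueness.} Since $E\not\equiv\infty$, Lemma~\ref{lem:conv} shows that $m:=\inf_{\mu\in\mathcal{P}_2(\mathbb{R}^D)}E(\mu)$ is finite. Pick a minimizing sequence $(\mu_n)_n$; eventually $E(\mu_n)\le K<\infty$, so $\mu_n=\varrho_n\mathcal{L}^D$ with $\sqrt{\varrho_n}\in W^{1,2}(\mathbb{R}^D)$ and $\int_{\mathbb{R}^D}\bigl(B(\varrho_n)+V\varrho_n\bigr)\,dx\le K$. Lemma~\ref{lem:phi_V} then bounds $\int_{\mathbb{R}^D}V\varrho_n\,dx$ uniformly in $n$, and since $V(x)\to+\infty$ as $|x|\to\infty$ (see (\ref{eq:growth-on-V})) the integral criterion of Remark~\ref{re:narrow-compact} gives tightness of $(\mu_n)_n$. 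Along a subsequence, $\mu_n\rightharpoonup\mu_s$ narrowly, and the narrow lower semicontinuity of $E$ (Lemma~\ref{lem:conv}) yields $E(\mu_s)\le m$; hence $\mu_s$ achieves the infimum, necessarily of the form $\mu_s=\varrho_s\mathcal{L}^D$ with $\sqrt{\varrho_s}\in W^{1,2}(\mathbb{R}^D)$, $P(\varrho_s)<\infty$, and $\int_{\mathbb{R}^D}V\varrho_s\,dx<\infty$ (so $\mu_s$ has finite second moment, by the coercivity of $V$). For uniqueness: if $\varrho_0\mathcal{L}^D$ and $\varrho_1\mathcal{L}^D$ both minimize, then $\tfrac12(\varrho_0+\varrho_1)\mathcal{L}^D$ is admissible, and the strict convexity of $E$ on its proper domain (Lemma~\ref{lem:conv}) forces $\varrho_0=\varrho_1$.

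\emph{The first variation.} Fix $\varphi\in C_c^\infty(\mathbb{R}^D)$, put $c:=\int_{\mathbb{R}^D}\varphi\,\varrho_s\,dx$ and $\varrho_t:=\varrho_s\bigl(1+t(\varphi-c)\bigr)$, with $\mu_t:=\varrho_t\mathcal{L}^D$. For $|t|$ small, $1+t(\varphi-c)>0$ on $\mathbb{R}^D$, so $\mu_t\in\mathcal{P}_2(\mathbb{R}^D)$, $\int_{\mathbb{R}^D}\varrho_t\,dx=1$, $\sqrt{\varrho_t}\in W^{1,2}(\mathbb{R}^D)$, and $t\mapsto E(\mu_t)$ has an interior minimum at $0$. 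The plan is to prove $t\mapsto E(\mu_t)$ is differentiable at $0$, with $\frac{d}{dt}\Big|_{0}P(\varrho_t)=\int_{\mathbb{R}^D}(B'(\varrho_s)+V)\varrho_s(\varphi-c)\,dx$ and $\frac{d}{dt}\Big|_{0}\mathcal{F}(\mu_t)=\tfrac12\int_{\mathbb{R}^D}(\varphi-c)\,|\nabla\sqrt{\varrho_s}|^2\,dx+\tfrac14\int_{\mathbb{R}^D}\nabla\varrho_s\cdot\nabla\varphi\,dx$ (the last term using $\nabla\varrho_s=2\sqrt{\varrho_s}\,\nabla\sqrt{\varrho_s}$). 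Adding these, setting the sum to $0$, and recalling the definitions of $\eta_s$ and of $c$, one obtains for every $\varphi\in C_c^\infty(\mathbb{R}^D)$ the identity $\tfrac14\int_{\mathbb{R}^D}\nabla\varrho_s\cdot\nabla\varphi\,dx+\int_{\mathbb{R}^D}\bigl(\tfrac12|\nabla\sqrt{\varrho_s}|^2+(B'(\varrho_s)+V)\varrho_s\bigr)\varphi\,dx=\eta_s\int_{\mathbb{R}^D}\varphi\,\varrho_s\,dx$, that is, the weak equation $-\tfrac14\triangle\varrho_s+\tfrac12|\nabla\sqrt{\varrho_s}|^2+(B'(\varrho_s)+V)\varrho_s=\eta_s\varrho_s$. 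Since every other term lies in $L^1_{\mathrm{loc}}$, this shows $\triangle\varrho_s\in L^1_{\mathrm{loc}}$; dividing by $\varrho_s$ where it is positive and using $\sqrt{\varrho_s}\,\triangle\sqrt{\varrho_s}=\tfrac12\triangle\varrho_s-|\nabla\sqrt{\varrho_s}|^2$ together with $B'=-A^{-1}$ rewrites the equation as $-\tfrac12\,\frac{\triangle\sqrt{\varrho_s}}{\sqrt{\varrho_s}}+V-A^{-1}(\varrho_s)=\eta_s$, i.e., (\ref{eq:S1}); multiplying back by $\varrho_s$ yields the form (\ref{eq:EulerLagrange}).

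\emph{Justifying the differentiation — the main obstacle.} The term $\int_{\mathbb{R}^D}V\varrho_t\,dx$ is differentiated by dominated convergence, since $\int_{\mathbb{R}^D}V\varrho_s\,dx<\infty$ and $\varphi$ is bounded. For the nonlinear parts the tool is convexity: as $B$ and the integrand $L$ of (\ref{eq:def_L}) are convex and $t\mapsto(\varrho_t,\nabla\varrho_t)$ is affine, the difference quotients $t^{-1}\bigl(B(\varrho_t)-B(\varrho_s)\bigr)$ and $t^{-1}\bigl(L(\varrho_t,\nabla\varrho_t)-L(\varrho_s,\nabla\varrho_s)\bigr)$ are monotone in $t$, hence dominated on a one‑sided neighbourhood of $0$ by their values at a fixed $t_0$; these majorants are integrable because $\mathcal{F}(\mu_{t_0})<\infty$, $\int_{\mathbb{R}^D}V\varrho_{t_0}\,dx<\infty$, and $\sqrt{\varrho_s}\in W^{1,2}$ forces $\varrho_s\in L^1\cap L^{1+1/D}$ by the Sobolev embedding, so Lemma~\ref{lem:upperbound} yields $|B(\varrho_{t_0})|,\ \varrho_s|B'(\varrho_s)|\in L^1$, while $\nabla\varrho_s\in L^1_{\mathrm{loc}}$. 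On $\{\varrho_s=0\}$ one has $\nabla\varrho_s=0$ a.e.\ and $\varrho_t=0$, so this set contributes nothing to $\mathcal{F}(\mu_t)$, and the limit of the $B$‑quotient there is $0$ — which is exactly where the hypothesis $\lim_{s\to0}sB'(s)=0$ enters. Dominated convergence then delivers the pointwise‑derivative formulas used above. I expect the analytic core of the proof to be precisely this step — differentiating past the non‑smoothness of $B$, the $1/\varrho$ singularity of the Fisher functional $\mathcal{F}$, and the degeneracy on $\{\varrho_s=0\}$ — and the convexity/monotonicity of difference quotients, the integrable majorants of Lemma~\ref{lem:upperbound}, and the condition $\lim_{s\to0}sB'(s)=0$ are exactly what make it go through. (A minor, purely technical point is verifying that the narrow limit of the minimizing sequence has finite second moment, which again follows from the growth of $V$.)
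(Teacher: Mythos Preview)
Your proof is correct and follows essentially the same route as the paper: the direct method (minimizing sequence, Lemma~\ref{lem:phi_V} for the $V$-bound and tightness, Lemma~\ref{lem:conv} for narrow lower semicontinuity and strict convexity), followed by a first variation along multiplicative perturbations of the density, with Lemma~\ref{lem:upperbound} supplying the integrable majorants and the hypothesis $\lim_{s\to 0}sB'(s)=0$ handling the set $\{\varrho_s=0\}$. The only cosmetic difference is that the paper parametrizes the perturbation at the level of $u_0=\sqrt{\varrho_s}$ (writing $u_\epsilon=(u_0+\epsilon u_0 v)/\|u_0+\epsilon u_0 v\|_{L^2}$) and justifies the passage to the limit via an explicit Lagrange-remainder expansion, whereas you perturb $\varrho_s$ directly and invoke the monotonicity of convex difference quotients; to first order the two perturbations coincide, and both arguments rest on the same $L^{1+1/D}$ and $V\varrho_s$ bounds.
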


\begin{proof} {\it Part I: Existence and uniqueness of a minimizer.}  Let $\left\{ \mu_{n}\right\} _{n\in\mathbb{N}}$ be a minimizing sequence
of $E\left(\mu\right)$, i.e.,
\[
\underset{n\rightarrow\infty}{\lim}E\left(\mu_{n}\right)=\underset{\mu\in\mathcal{P}_{2}\left(\mathbb{R}^{D}\right)}{\inf}E\left(\mu\right).
\]
Since both $P$ and $\mathcal F$ are bounded below,  
\[
\sup_n \mathcal F(\mu_n)<\infty. 
\]
By Lemma 2.2 of \cite{matthes2009family},  there exist $\varrho_n: \mathbb R^D \rightarrow [0,\infty]$  such that $\mu_n=\varrho_n \mathcal L^D.$ We have 
\[
\sup_n P(\varrho_n)<\infty 
\]
and hence, Lemma \ref{lem:phi_V} implies 
\[
\sup_n \int_{\mathbb R^D} V \varrho_n dx <\infty. 
\]
Thus, by Remark \ref{re:narrow-compact}, $\{\mu_n\}_n$ is pre--compact for the narrow convergence. Extracting a subsequence if necessary, we assume without loss of generality that $\{\mu_n\}_n$ converges narrowly to some $\mu_s \in \mathcal P_2(\mathbb R^D).$ By Lemma 2.2 of \cite{matthes2009family}, there exists $\varrho_s: \mathbb R^D \rightarrow [0,\infty]$ such that 
\[
\mu_n=\varrho_n \mathcal L^D, \quad \mu_s=\varrho_s \mathcal L^D, \quad \varrho_n, \varrho_s \in W^{1,1}_{loc}(\mathbb R^D), \quad {|\nabla \varrho_n| \over \sqrt \varrho_n}, {|\nabla \varrho_s| \over \sqrt \varrho_s} \in L^2(\mathbb R^D).
\] 
Furthermore, $(\sqrt \varrho_n)_n$ converges to $\sqrt \varrho_s$, strongly in $L^2(\mathbb R^D)$ and weakly in $W^{1,2}(\mathbb R^D).$ By Lemma \ref{lem:conv}, $E$ is lower semicontinuous for the narrow convergence and hence, $\mu_s$ minimizes $E$ over $\mathcal P_2(\mathbb R^D)$. 

Uniqueness of $\mu_s$ follows from the strict convexity property of $E$ on its domain (cf. Lemma \ref{lem:conv}). \\ 

 {\it Part II: Properties of the minimizer.} 
Since 
\begin{equation}\label{eq:expansion-0.1}
P(\varrho_s) \leq \underset{\mu\in\mathcal{P}_{2}\left(\mathbb{R}^{D}\right)}{\inf}E\left(\mu\right), 
\end{equation} 
we use the last statement in Lemma \ref{lem:phi_V} and the fact that $V$ is bounded below to deduce that  
\begin{equation}\label{eq:expansion0}
\varrho_s |V| \in L^1(\mathbb R^D). 
\end{equation}
By Remark \ref{re:convexity} 
\begin{equation}\label{eq:expansion0.1}
B(\varrho_s)_- \leq  \varrho_s V -B(\varrho_\infty) -\varrho_\infty V \in L^1(\mathbb R^D). 
\end{equation}
Thus, combining (\ref{eq:expansion-0.1}), (\ref{eq:expansion0}) and (\ref{eq:expansion0.1}) we conclude that 
\begin{equation}\label{eq:expansion0.2}
B(\varrho_s) \in L^1(\mathbb R^D). 
\end{equation}

 {\it Part III: The Euler--Lagrange equations.} Let $v \in C_c^\infty(\mathbb R)$ and set 
\[
u_0=\sqrt \varrho_s, \quad u_\epsilon:={u_0+ \epsilon u_0 v \over ||u_0+ \epsilon u_0 v||_{L^2}}, \quad \mu^\epsilon=u_\epsilon^2 \mathcal L^D. 
\] 
We have 
\begin{equation}\label{eq:expansion1}
u_\epsilon^2= u_0^2 +2\epsilon u_0^2 a(v) +\epsilon^2 u_0^2a_\epsilon(v),
\end{equation}
where
\[a(v):= v- \int_{\mathbb R^D} u_0^2 vdx  \quad \hbox{and} \quad  \sup_{0 < |\epsilon|<1}||b_\epsilon(v) ||_\infty<\infty.
\] 
We set 
\[
S:= \Bigl(1+  ||2 a(v)||_{L^\infty} + \sup_{|\epsilon| \leq 1} || a_\epsilon(v))||_{L^\infty}\Bigr)^{1\over 2}.
\] 

We have 
\[ 
\int_{\mathbb R^D} V u_\epsilon^2 dx -\int_{\mathbb R^D} V u_0^2 dx = 
\epsilon \int_{ \mathbb R^D} V u_0^2 \Bigl( 2a(v) +\epsilon a_\epsilon(v) \Bigr) dx.
\]
Therefore, exploiting (\ref{eq:expansion0}) we can apply the dominated convergence theorem to obtain 
\begin{equation}\label{eq:expansion1.5} 
{d \over d \epsilon} \int_{\mathbb R^D} V u_\epsilon^2 dx \biggl|_{\epsilon=0} =  \int_{ \mathbb R^D} 2a(v) V u_0^2 dx
\end{equation}

If $D\geq 3$, then 
\[
{2D \over D-2} \geq {2+ {2 \over D}}. 
\] 
Since by the Sobolev Embedding theorem $W^{1,2}(\mathbb R^D)  \subset L^{{2D \over D-2}}(\mathbb R^D)$, we conclude that $ W^{1,2}(\mathbb R^D)   \subset L^{{2+ {2 \over D}}}(\mathbb R^D).$ The latter inclusion remains true when $D \in \{1, 2\}.$ Consequently, $u_0 \in L^{{2+ {2 \over D}}}(\mathbb R^D)$ and then, $F^\infty \in L^1(\mathbb R^D)$ if we set 
\[
F^\infty:=  \lambda_1 \Bigl(  u_0^{2 +{2 \over D}} S^{2 +{2 \over D}}+  u_0^{2}  S^2\Bigr) + |V|u_0^2S^2-f^\infty.
\]

By Lemma \ref{lem:upperbound} 
\begin{equation}\label{eq::ante-expand2} 
|B(u_\epsilon^2)| \leq \lambda_1 \Bigl(  u_\epsilon^{2 +{2 \over D}} +  u_\epsilon^{2}  \Bigr) + Vu_\epsilon^2-f^\infty 
\leq \lambda_1 \Bigl(  (u_0 S)^{2 +{2 \over D}} +  (u_0 S)^{2}  \Bigr) + |V|(u_0 S)^2-f^\infty  = F^\infty.
\end{equation}

Let $\theta_\epsilon: \mathbb R^D \rightarrow (0,1)$ be such that if $u_0>0$ we have the first order expansion
\[
B(u_\epsilon^2)- B(u_0^2)= (u_\epsilon^2-u_0^2) B'\Bigl(u_0^2 + \theta_\epsilon ( (u_\epsilon^2-u_0^2)) \Bigr).
\] 
This means that 
\[
B(u_\epsilon^2)- B(u_0^2)= \epsilon u_0^2 \Bigl(2 a(v) +\epsilon a_\epsilon(v) \Bigr) 
B'\Bigl(\bigl(u_0\Theta_\epsilon\bigr)^2 \Bigr), 
\] 
where   
\[
\Theta_\epsilon:= \Bigl(1 + \epsilon  \theta_\epsilon \bigl[2 a(v) +\epsilon a_\epsilon(v) \bigr]\Bigr)^{1\over 2}.
\]
Reorganizing the expession, we have   
\begin{equation}\label{eq::ante-expand3}
{B(u_\epsilon^2)- B(u_0^2) \over \epsilon}= 
{ \Bigl(2 a(v) +\epsilon a_\epsilon(v) \Bigr) \over 1 + \epsilon  \theta_\epsilon \bigl[2 a(v) +\epsilon a_\epsilon(v) \bigr]}
\bigl(u_0 \Theta_\epsilon\bigr)^2 
B'\Bigl(\bigl(u_0\Theta_\epsilon\bigr)^2 \Bigr).
\end{equation} 
This, together with Lemma \ref{lem:upperbound}, imply 
\[
\biggl|{B(u_\epsilon^2)- B(u_0^2) \over \epsilon}\biggr| \leq 
\biggl( \bar \lambda_1 \bigl(u_0\Theta_\epsilon\bigr)^{2 +{2 \over D}} + \bar \lambda_2 \bigl(u_0\Theta_\epsilon\bigr)^{2} +|V| \bigl(u_0\Theta_\epsilon\bigr)^{2}-f^\infty\biggr) 
{ \Bigl|2 a(v) +\epsilon a_\epsilon(v) \Bigr| \over 1 + \epsilon  \theta_\epsilon \bigl[2 a(v) +\epsilon a_\epsilon(v) \bigr]}.
\]
Thus, if $|\epsilon|$ is small enough so that $2\bigl|\epsilon  \theta_\epsilon \bigl[2 a(v) +\epsilon a_\epsilon(v) \bigr]\bigr| \leq 1$, then 
\begin{equation}\label{eq::ante-expand4}
\biggl|{B(u_\epsilon^2)- B(u_0^2) \over \epsilon}\biggr| \leq 
2 S^2\biggl( \bar \lambda_1 \bigl(u_0S\bigr)^{2 +{2 \over D}} + \bar \lambda_2 \bigl(u_0S\bigr)^{2} +V \bigl(u_0S\bigr)^{2}-f^\infty\biggr) \in L^1(\mathbb R^D).
\end{equation} 
Since $B(0)=0$ and $u_\epsilon\equiv 0$ on $\{u_0=0\},$ we conclude that 
\[
\int_{\mathbb R^D} {B(u_\epsilon^2)- B(u_0^2) \over \epsilon} dx= \int_{\{u_0>0\}} {B(u_\epsilon^2)- B(u_0^2) \over \epsilon} dx.
\] 
Due to (\ref{eq::ante-expand4}), we can apply the dominated convergence theorem to conclude that 
\[
{d \over d \epsilon}  \int_{\mathbb R^D} B(u_\epsilon^2) dx\bigg|_{\epsilon=0}= 
\int_{\{u_0>0\}} \lim_{\epsilon \rightarrow 0} {B(u_\epsilon^2)- B(u_0^2) \over \epsilon} dx.
\] 
We then let $\epsilon$ go to $0$ in  (\ref{eq::ante-expand3}) to deduce that 
\[
{d \over d \epsilon}  \int_{\mathbb R^D} B(u_\epsilon^2) dx\bigg|_{\epsilon=0}=  
2\int_{\{u_0>0\}} a(v)u_0^2  B'\bigl(u_0^2\bigr)dx.
\]
Taking into account the fact that $\lim_{s \rightarrow 0} s B'(s)=0$, we get  
\begin{equation}\label{eq::ante-expand5}
{d \over d \epsilon}  \int_{\mathbb R^D} B(u_\epsilon^2) dx\bigg|_{\epsilon=0}=  
2\int_{\mathbb R^D} a(v)u_0^2  B'\bigl(u_0^2\bigr)dx.
\end{equation}

Note that   
\begin{equation}\label{eq:expansion5} 
|\nabla u_\epsilon|^2=|\nabla u_0|^2+ 2\epsilon e(v) 
+ \epsilon^2 \Bigl( u_0^2+ |u_0 \nabla u_0| + |\nabla u_0|^2\Bigr)e_\epsilon(v), 
\end{equation}  
where 
\[
e(v):=  \langle \nabla u_0; \nabla (vu_0) \rangle  - |\nabla u_0|^2 \int_{\mathbb R^D} u_0^2 vdx  \quad \hbox{and} \quad  
 \sup_{0 < |\epsilon|<1}||e_\epsilon(v) ||_\infty<\infty
\] 
Hence, applying the dominated convergence theorem, we have
\begin{equation}\label{eq:expansion6} 
{d \over d \epsilon}  \int_{\mathbb R^D} |\nabla u_\epsilon|^2 dx\bigg|_{\epsilon=0} =\int_{\mathbb R^D} 2e(v) dx. 
\end{equation}  
We combine (\ref{eq:expansion1.5}), (\ref{eq::ante-expand5}) and (\ref{eq:expansion6}) to conclude that 
\[
{d \over d \epsilon} E(\mu^\epsilon) \bigg|_{\epsilon=0} = 
\int_{ \mathbb R^D } e(v) dx + 2 \int_{ \mathbb R^D } \bigl(B'(u_0^2)+V \bigr) u_0^2 a(v)  dx.
\]
Using the fact that $E(\mu^\epsilon)$ achieves its minimum at $\epsilon=0$, we conclude that  
\begin{equation}\label{eq:expansion7} 
0= {d \over d \epsilon} E(\mu^\epsilon) \bigg|_{\epsilon=0} = 
\int_{ \mathbb R^D } \Bigl( e(v)+ 2 \bigl(B'(u_0^2)+V \bigr) u_0^2 a(v)  \Bigr)dx.
\end{equation}
In other words, 
\[
\int_{ \mathbb R^D } \biggl( 
\langle \nabla u_0; \nabla (u_0 v) \rangle -l_0^2 u_0^2  v + 2 \bigl(B'(u_0^2)+V \bigr) u_0^2 v -2 u_0^2 l_1 v 
\biggr)dx=0, 
\] 
where 
\[
l_0:=||\nabla u_0||_{L^2}, \quad l_1:= \int_{\mathbb R^D} \bigl(B'(u_0^2)+V \bigr) u_0^2 dx. 
\] 
This implies that for all $v \in C_c^\infty(\mathbb R^D)$ 
\begin{align}
0= & \int_{ \mathbb R^D } \biggl( \langle u_0 \nabla u_0; \nabla  v \rangle +  \Bigl(|\nabla u_0|^2-l_0^2 u_0^2   + 2 \bigl(B'(u_0^2)+V \bigr) u_0^2  -2 u_0^2 l_1 \Bigr) v \biggr)dx 
\nonumber \\
= & \int_{ \mathbb R^D } \biggl( {1 \over 2}\langle \nabla \varrho_s ; \nabla  v \rangle +  \Bigl( |\nabla \sqrt \varrho_s|^2   + 2 \bigl(B'(\varrho_s)+V -l_1 -{l_0^2 \over 2} \bigr) \varrho_s  \Bigr) v \biggr)dx.  \label{eq:eulerLeq}
\end{align}
This means that (\ref{eq:EulerLagrange}) holds in the distributional sense.
\end{proof}

\begin{definition} Given $G: \mathcal P(\mathbb R^D) \rightarrow (-\infty, \infty]$, we define $G^*$ on the set of Borel functions $W: \mathbb R^D \rightarrow (\infty, \infty]$ which is bounded below, by  
\[
G^{*}\left(W\right)=\underset{\mu}{\sup}\left\{ \int_{\mathbb{R}^{D}} W(x) \mu (dx)-G\left(\mu\right)\; | \; \mu \in \mathcal P(\mathbb R^D)\right\}.
\] 
We refer to $G^*$ as the Legendre transform of $G.$
\end{definition}

The next result follows immediately from the definition of the Legendre tranform. 
\begin{lemma}\label{lem:Legendre2} If $V$ and $E$ are as in Theorem \ref{thm:var_eqn} and for any $\mu \in \mathcal P(\mathbb R^D)$ we define
\[
G\left(\mu\right):=\left\{ \begin{array}{ll}
\mathcal{F}\left(\mu\right)+\int_{\mathbb{R}^{D}}B\left(\varrho\right)dx, & \textrm{if }\mu=\varrho\mathcal{L}^{D},\\
& \\
\infty, & \textrm{otherwise,}
\end{array}\right.
\]
then  
\[
-G^{*}\left(-V\right)=\underset{\varrho}{\inf}E\left(\varrho \mathcal L^D\right).
\]
\end{lemma}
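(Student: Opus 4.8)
The plan is to prove Lemma \ref{lem:Legendre2} by directly unfolding the definition of the Legendre transform; no analysis beyond bookkeeping of definitions should be needed, which is precisely the sense in which ``the result follows immediately from the definition of the Legendre transform''. First I would write, straight from the definition of $G^{*}$,
\[
G^{*}(-V) = \sup_{\mu \in \mathcal P(\mathbb R^{D})} \left\{ -\int_{\mathbb R^{D}} V(x)\,\mu(dx) - G(\mu) \right\},
\]
and note that $G(\mu) = +\infty$ unless $\mu$ is absolutely continuous with respect to $\mathcal L^{D}$; consequently the supremum is unchanged if it is restricted to measures of the form $\mu = \varrho\mathcal L^{D}$.

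For such a $\mu$ I would then compute, using in turn the definition of $G$, the definition of $P$ from Remark \ref{re:convexity}, and the definition of $E$ from \eqref{eq:func_E},
\[
-\int_{\mathbb R^{D}} V\varrho\,dx - G(\varrho\mathcal L^{D}) = -\int_{\mathbb R^{D}} V\varrho\,dx - \mathcal F(\varrho\mathcal L^{D}) - \int_{\mathbb R^{D}} B(\varrho)\,dx = -\bigl(\mathcal F(\varrho\mathcal L^{D}) + P(\varrho)\bigr) = -E(\varrho\mathcal L^{D}).
\]
Pulling the minus sign out of the supremum turns it into an infimum, so $G^{*}(-V) = -\inf_{\varrho} E(\varrho\mathcal L^{D})$, which is exactly the claimed identity $-G^{*}(-V) = \inf_{\varrho} E(\varrho\mathcal L^{D})$.

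The one point that requires a word of care — and hence the main, albeit mild, obstacle — is to make sure the integrand $-\int V\varrho\,dx - G(\mu)$ is never of the indeterminate form $\infty-\infty$, so that the identity $-\int V\varrho\,dx - G(\mu) = -E(\mu)$ is unambiguous. This is supplied by the hypotheses inherited from Theorem \ref{thm:var_eqn}: $V$ is bounded below, so $\int V\varrho\,dx$ is well defined in $(-\infty,+\infty]$; and the bound $B_{-}(\varrho) \le V\varrho - f^{\infty}$ from \eqref{eq:upper-bound}, together with $f^{\infty} \in L^{1}(\mathbb R^{D})$, shows that $\int B(\varrho)\,dx$, hence $P(\varrho)$ and $E(\varrho\mathcal L^{D})$, are well defined in $(-\infty,+\infty]$ as well. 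With these observations the chain of equalities above holds in $[-\infty,+\infty)$ throughout, and the lemma follows without any further argument.
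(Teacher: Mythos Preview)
Your proof is correct and matches the paper's approach exactly: the paper states only that ``the next result follows immediately from the definition of the Legendre transform'' and gives no further argument, so your unpacking of the definition is precisely what was intended. Your additional paragraph handling the potential $\infty-\infty$ indeterminacy is a welcome clarification that the paper omits.
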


\begin{remark}  The conclusions in Theorem \ref{thm:var_eqn} remain valid if we  replace $\mathbb R^D$ by the torus $\mathbb T^D$. We keep the same assumptions on $B$ and $b$, but on $V$  we only assume that $V: \mathbb R^D \rightarrow \mathbb R$ is a Borel function bounded below, skipping (\ref{eq:growth-on-V}).  
\end{remark}

%
%
%
\section{Moreau-Yosida approximation}\label{sec:moreau-app} 
In the remainder of this paper, we develop an approximative version of the kinetic Bohmian equation with the aim of applying the results obtained in \cite{ambrosio2008hamiltonian}. As we shall see, this approximative version allows us to get around one of the main difficulties of the kinetic Bohmian equation when studied in the context of Wasserstein Hamiltonian flows: the lack of $\lambda-$convexity of the corresponding Hamiltonian.

We assume throughout this section that $d$ is an integer with $d \geq 1$ and  $D \in \{d, 2d\}$. We also assume that $\Phi: \mathcal P_2(\mathbb R^D) \rightarrow [0, \infty]$ is proper and lower semicontinuous with respect to the narrow convergence on bounded subsets of $ \mathcal P_2(\mathbb R^{D}).$  If $D=2d$ we assume that 
\[
\emptyset \not = D(\Phi) \subset \Bigl\{\mu \in \mathcal P_2(\mathbb R^{2d})\; | \; \pi^1_\# \mu \in \mathcal P_2^r(\mathbb R^d)  \Bigr\}.
\]
Finally, when $D=d$, we assume that 
\[
\emptyset \not = D(\Phi) \subset  \mathcal P_2^r(\mathbb R^d).
\]

For $\tau>0$ and $\mu \in \mathcal P_2(\mathbb R^{2d})$, we define the Moreau--Yosida approximation of $\Phi$ by
\begin{equation}\label{eq:dec20.2015.1}
\Phi_\tau(\mu)= \inf_{\nu} \Bigl\{{1 \over 2\tau}W_2^2(\mu, \nu)+ \Phi(\nu) \Bigr\}. 
\end{equation} 
We shall use the function   
\[
M_2(\mu)= {1 \over 2} \int_{\mathbb R^D} |z|^2 \mu(dz).
\] 
We fix  $\nu_\ast \in D(\Phi)$  and set 
\[
C_\tau:={2 \over \tau} M_2(\nu_\ast) +\Phi(\nu_\ast).
\] 
\begin{remark}\label{re:moreau-fisher}  Existence of a solution in (\ref{eq:dec20.2015.1}) is a standard result due to the fact that $\Phi$ is lower semicontinuous for the narrow convergence. Moreover, we define the set of minimizers 
\[
J_\tau^\Phi(\mu):= \Bigl\{\nu \in \mathcal P_2(\mathbb R^D) \; | \;  \Phi_\tau(\mu)={1 \over 2\tau}W_2^2(\mu, \nu)+ \Phi(\nu) \Bigr\}
\] 
By abuse of notation, we denote by $\mu^\tau$ any element of $J_\tau^\Phi(\mu).$ 

If $\mu \in \mathcal P_2^r(\mathbb R^D)$, then $W_2^2(\mu, \cdot)$ is strictly convex along geodesics of the $L^1$--metric and hence, since in addition $\Phi$ is convex, $J_\tau^\Phi(\mu)$ reduces to a single element  (cf., e.g., \cite{gianazza2009wasserstein} and \cite{villani2003topics}).  
\end{remark}

 \begin{lemma} \label{le:moreau-convex} The following hold: 
 \begin{enumerate}
\item[(i)] $-\Phi_\tau$ is $({-1 \over \tau})$--convex along geodesics of constant speed.
\item[(ii)] If $\mu \in \mathcal P_2(\mathbb R^D)$, then 
\[
0 \leq \Phi_\tau(\mu) \leq {1 \over  \tau}M_2(\mu) +C_\tau
\]  
\item[(iii)] Let $\mu_0, \mu \in \mathcal P_2(\mathbb R^D)$; let $G \in \Gamma_{o}(\mu_0, \mu^\tau_0)$ and denote by $G_{\mu_0}^{\mu^\tau_0}$ the  barycentric projection of $G$. Let $\bar G \in \Gamma_{o}(\mu_0, \mu).$ We have  
\[
\Phi_\tau(\mu) \leq \Phi_\tau(\mu_0)+ 
\int_{\mathbb R^D \times \mathbb R^D} \Bigl \langle  {w -G_{\mu_0}^{\mu^\tau_0}(w)\over \tau} ; z-w \Bigr\rangle \bar G(dw, dz)+ {1 \over 2 \tau}W_2^2(\mu, \mu_0).
\]
\item[(iv)] We conclude that  
\[
{ {\bf id}-G_{\mu_0}^{\mu^\tau_0}\over \tau} \in \bar{\partial} \Phi_\tau (\mu_0).
\]
\end{enumerate}
 \end{lemma}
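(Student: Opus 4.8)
The plan is to prove the four assertions of Lemma~\ref{le:moreau-convex} in order, since each relies on the previous ones. For (i), I would fix two measures $\mu_0,\mu_1 \in \mathcal P_2(\mathbb R^D)$, a constant-speed geodesic $t\mapsto \mu_t$ joining them, and choose minimizers $\nu_0 \in J_\tau^\Phi(\mu_0)$, $\nu_1 \in J_\tau^\Phi(\mu_1)$. Letting $t\mapsto \nu_t$ be a constant-speed geodesic from $\nu_0$ to $\nu_1$, I would use $\nu_t$ as a (possibly non-optimal) competitor in the definition of $\Phi_\tau(\mu_t)$, giving $\Phi_\tau(\mu_t) \leq \frac{1}{2\tau}W_2^2(\mu_t,\nu_t)+\Phi(\nu_t)$. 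Then the convexity of $\Phi$ along $\nu_t$, together with the standard fact that $t\mapsto W_2^2(\mu_t,\nu_t)$ is $2$-convex minus a defect controlled by $W_2^2(\mu_0,\mu_1)$ (this is the classical estimate behind $(-1/\tau)$-convexity of Moreau--Yosida regularizations in $\mathcal P_2$, e.g. in \cite{ambrosio2008gradient}), yields the claimed inequality with $\lambda = -1/\tau$. The main point here is the quadratic expansion of $W_2^2$ along a pair of geodesics; I would invoke the cost-convexity estimate rather than re-derive it.

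For (ii), nonnegativity is immediate since $\Phi \geq 0$ and $W_2^2 \geq 0$. For the upper bound, I would simply insert $\nu = \nu_\ast$ as a competitor: $\Phi_\tau(\mu) \leq \frac{1}{2\tau}W_2^2(\mu,\nu_\ast)+\Phi(\nu_\ast)$, and then bound $W_2^2(\mu,\nu_\ast) \leq 2\int|z|^2\mu(dz)+2\int|z|^2\nu_\ast(dz) = 4M_2(\mu)+4M_2(\nu_\ast)$ by the elementary inequality $|x-y|^2 \leq 2|x|^2+2|y|^2$ applied to any coupling; dividing by $2\tau$ and recalling $C_\tau = \frac{2}{\tau}M_2(\nu_\ast)+\Phi(\nu_\ast)$ gives exactly the stated bound. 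Actually, to get the constant $\frac1\tau M_2(\mu)$ rather than $\frac2\tau M_2(\mu)$ one uses the product coupling and Young's inequality with a parameter, or notes $W_2^2(\mu,\nu_\ast)\le 2M_2(\mu)\cdot 2 + \dots$; I would just check the bookkeeping so the final constant matches $C_\tau$.

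For (iii), which is the technical heart, I would start from $\Phi_\tau(\mu_0) = \frac{1}{2\tau}W_2^2(\mu_0,\mu_0^\tau)+\Phi(\mu_0^\tau)$ with $\mu_0^\tau \in J_\tau^\Phi(\mu_0)$, and from the suboptimal bound $\Phi_\tau(\mu) \leq \frac{1}{2\tau}W_2^2(\mu,\mu_0^\tau)+\Phi(\mu_0^\tau)$ obtained by using $\mu_0^\tau$ as a competitor for $\Phi_\tau(\mu)$. Subtracting, the $\Phi(\mu_0^\tau)$ terms cancel and it remains to estimate $\frac{1}{2\tau}\big(W_2^2(\mu,\mu_0^\tau)-W_2^2(\mu_0,\mu_0^\tau)\big)$. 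Here I would build a plan between $\mu$ and $\mu_0^\tau$ by gluing $\bar G \in \Gamma_o(\mu_0,\mu)$ with $G \in \Gamma_o(\mu_0,\mu_0^\tau)$ over the common marginal $\mu_0$: this produces a three-point measure on $\mathbb R^D\times\mathbb R^D\times\mathbb R^D$ with coordinates $(w, z, y)$ whose $(z,y)$-marginal is a valid coupling of $\mu$ and $\mu_0^\tau$, so $W_2^2(\mu,\mu_0^\tau) \leq \int |z-y|^2$. Expanding $|z-y|^2 = |w-y|^2 + 2\langle w-y, z-w\rangle + |z-w|^2$ and integrating: the first term integrates to $W_2^2(\mu_0,\mu_0^\tau)$ (since $G$ is optimal), the last term integrates to $W_2^2(\mu,\mu_0)$, and the cross term, after integrating out $y$ against the conditional of $G$ given $w$, produces $2\int \langle w - G_{\mu_0}^{\mu_0^\tau}(w), z-w\rangle \bar G(dw,dz)$ by the very definition of the barycentric projection. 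Dividing by $2\tau$ gives the claimed inequality. The obstacle I anticipate is making the gluing measure-theoretically rigorous — one needs the disintegration of $\bar G$ and of $G$ with respect to $\mu_0$ and the gluing lemma — and being careful that the cross term only depends on $G$ through its barycentric projection (which requires that the integrand $z-w$ be $\bar G$-measurable in $(w,z)$ while $w - y$ gets averaged in $y$ alone).

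Finally, (iv) is a direct consequence of (iii) combined with Definition~\ref{de:differentiability1}(i)--(ii): the inequality in (iii), valid for every $\mu$ and every $\bar G \in \Gamma_o(\mu_0,\mu)$, together with the bound $\frac{1}{2\tau}W_2^2(\mu,\mu_0) = o(W_2(\mu_0,\mu))$ as $\mu \to \mu_0$, says precisely that $\xi := (\mathbf{id}-G_{\mu_0}^{\mu_0^\tau})/\tau$ satisfies $-\Phi_\tau(\mu)-(-\Phi_\tau(\mu_0)) \geq \sup_{\bar G}\int (-\xi)(w)\cdot(z-w)\,\bar G(dw,dz) + o(W_2(\mu_0,\mu))$, i.e. $-\xi \in \ubar\partial(-\Phi_\tau)(\mu_0)$, which by definition means $\xi \in \bar\partial \Phi_\tau(\mu_0)$. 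One should check that $\xi \in L^2(\mu_0)$, which follows since $G_{\mu_0}^{\mu_0^\tau}$ is the barycentric projection of an optimal plan and $\mu_0^\tau \in \mathcal P_2$, so $\|G_{\mu_0}^{\mu_0^\tau}\|_{\mu_0} \leq W_2$-type bounds apply; this is the only routine integrability check remaining.
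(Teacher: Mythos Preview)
Your argument for part~(i) has a genuine gap. You propose to connect minimizers $\nu_0 \in J_\tau^\Phi(\mu_0)$ and $\nu_1 \in J_\tau^\Phi(\mu_1)$ by a geodesic $t\mapsto \nu_t$ and then invoke ``the convexity of $\Phi$ along $\nu_t$''. But the standing hypotheses on $\Phi$ in Section~\ref{sec:moreau-app} are only that $\Phi\ge 0$ is proper and lower semicontinuous for narrow convergence; no geodesic convexity is assumed, and indeed the Fisher information (the case of interest later) is \emph{not} displacement convex. Without control on $\Phi(\nu_t)$ your upper bound on $\Phi_\tau(\mu_t)$ cannot be closed. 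The paper avoids this entirely: it fixes $t$, chooses a \emph{single} minimizer $\mu_t^\tau\in J_\tau^\Phi(\mu_t)$, and uses it as competitor for \emph{both} endpoints, so that $\Phi(\mu_t^\tau)$ cancels and only the $(-1)$--convexity of $-\tfrac12 W_2^2(\cdot,\mu_t^\tau)$ along the geodesic $(\mu_t)_t$ is needed. That is the missing idea.

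Your argument for part~(iii), on the other hand, is correct and in fact more elementary than the paper's. The paper obtains the same inequality by citing that $\psi:=-\tfrac12 W_2^2(\cdot,\mu_0^\tau)$ is $(-1)$--convex with $\mathbf{id}-G_{\mu_0}^{\mu_0^\tau}\in\ubar\partial\psi(\mu_0)$ and then applying Theorem~10.3.6 of \cite{ambrosio2008gradient}. Your direct route via the gluing lemma and the expansion $|z-y|^2=|w-y|^2+2\langle w-y,z-w\rangle+|z-w|^2$ reproduces exactly that theorem's content in this special case, and makes transparent why only the barycentric projection of $G$ enters. Parts~(ii) and~(iv) are essentially as in the paper; the factor--of--two bookkeeping you flag in~(ii) is a harmless imprecision already present in the stated constants.
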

\proof{} (i) Let $\mu_0, \mu_1 \in \mathcal P_2(\mathbb R^D)$ and let $(\mu_t)_t$ be a geodesic of constant speed connecting $\mu_0$ to $\mu_1.$  Fix $t \in (0,1)$ and let  $\mu_t^\tau \in \mathcal P_2(\mathbb R^D )$ be such that 
\begin{equation}\label{eq:dec28.2015.1}
\Phi_\tau(\mu_t)= {1 \over 2\tau}W_2^2(\mu_t, \mu^\tau_t)+ \Phi(\mu^\tau_t).
\end{equation} 
We have 
\[
\Phi_\tau(\mu_i) \leq {1 \over 2\tau}W_2^2(\mu_i, \mu^\tau_t)+ \Phi(\mu^\tau_t) \quad \forall \; i \in \{0,1\}.
\] 
Thus, 
\begin{equation}\label{eq:dec28.2015.2}
(1-t)\Phi_\tau(\mu_0)+t \Phi_\tau(\mu_1) \leq {1-t \over 2\tau}W_2^2(\mu_0, \mu^\tau_t)+ {t \over 2\tau}W_2^2(\mu_1, \mu^\tau_t)+ \Phi(\mu^\tau_t).
\end{equation} 
Since $-1/2W_2^2(\cdot, \mu^\tau_t)$ is $(-1)$--convex along geodesics of constant speed (cf., e.g., \cite{ambrosio2008gradient}), we conclude that 
\[
W_2^2(\mu_t, \mu^\tau_t)+t(1-t)W_2^2(\mu_0, \mu_1) \geq (1-t) W_2^2(\mu_0, \mu^\tau_t)+t W_2^2(\mu_1, \mu^\tau_t).
\]
This, along with (\ref{eq:dec28.2015.2}), yield 
\[
(1-t)\Phi_\tau(\mu_0)+t \Phi_\tau(\mu_1) \leq {1 \over 2\tau}W_2^2(\mu_t, \mu^\tau_t)+ {1 \over 2\tau}t(1-t)W_2^2(\mu_0, \mu_1)+\Phi(\mu_t). 
\] 
Therefore, by (\ref{eq:dec28.2015.1}) 
\[ %
(1-t)\Phi_\tau(\mu_0)+t \Phi_\tau(\mu_1) \leq {1 \over 2\tau}t(1-t)W_2^2(\mu_0, \mu_1)+ \Phi_\tau(\mu_t).
\]  
This proves (i).

(ii)  We have 
\[
0 \leq \Phi_\tau(\mu) \leq {1 \over 2 \tau}W_2^2(\mu, \nu_\ast) +\Phi(\nu_\ast).
\] 
This, together with the triangle inequality  
\[
(W_2(\mu, \nu_\ast))^2 \leq  \Bigl(W_2(\mu, \delta_0)+W_2(\delta_0, \nu_\ast) \Bigr)^2 \leq 4M_2(\mu)+4M_2(\nu_\ast),
\] 
give (ii).  

(iii) Let $\mu_0, \mu \in \mathcal P_2(\mathbb R^D)$. We have 
\begin{equation}\label{eq:jan03.2015.3}
\Phi_\tau(\mu) \leq \Phi(\mu_0^\tau)+{W_2^2(\mu, \mu_0^\tau) \over 2 \tau}= \Phi_\tau(\mu_0)- {W_2^2(\mu_0, \mu_0^\tau) \over 2 \tau}+{W_2^2(\mu, \mu_0^\tau) \over 2 \tau}.
\end{equation}
By Theorem 7.3.2 \cite{ambrosio2008gradient}, $\psi:= -1/2 W_2^2(\cdot, \mu_0^\tau)$ is $(-1)$--convex along geodesics. Since ${\bf id} - G_{\mu_0}^{\mu^\tau_0}  \in  \ubar{\partial} \psi (\mu_0)$, by Theorem 10.3.6 \cite{ambrosio2008gradient}, we have 
\[ 
\psi(\mu) \geq \psi(\mu_0)+ \int_{\mathbb R^D \times \mathbb R^D} \Bigl
\langle G_{\mu_0}^{\mu^\tau_0}(w)- w ;z-w  \Bigr\rangle \bar G(dw, dz)- {1 \over 2 }W_2^2(\mu, \mu_0).
\] 
This, along with (\ref{eq:jan03.2015.3}), yield (iii). 

We use (i), (iii) and Theorem 10.3.6 \cite{ambrosio2008gradient} to obtain (iv). \endproof

 \begin{remark} \label{re:moreau-sub}   Let $\mu \in \mathcal P_2(\mathbb R^D)$ and $G \in \Gamma_{o}(\mu, \mu^\tau)$. Furthermore, let $G_{\mu^\tau}^{\mu}$ be  the barycentric projection of $G$ based at  $\mu^\tau.$   
 \begin{enumerate}
\item[(i)]  We have 
\[
{ G_{\mu^\tau}^{\mu}-{\bf id} \over \tau} \in \ubar{\partial} \Phi(\mu^\tau), \quad  
{{\bf id}-G^{\mu^\tau}_{\mu} \over \tau} \in \bar{\partial} \Phi_\tau(\mu)
\]
\item[(ii)] We have 
\[
{W_2^2(\mu^\tau, \mu) \over \tau^2} \geq \Big\| {{\bf id} - G^{\mu}_{\mu^\tau} \over  \tau} \Big\|^2_{\mu^\tau},  \quad 
 \Big\| {{\bf id} - G^{\mu^\tau}_{\mu} \over  \tau} \Big\|^2_{\mu}
\] 
\item[(iii)] If $R>0$ and $\mu_1, \mu_0 \in \mathcal P_2(\mathbb R^D)$ are such that $W_2(\mu_0, \delta_0) \leq R$ and $W_2(\mu_1, \delta_0) \leq R$, then  for a constant $ \bar C_{\tau,R}$ depending on $R$ and $\tau$ 
\[
|\Phi_\tau(\mu_1)-\Phi_\tau(\mu_0)| \leq \bar C_{\tau,R} W_2(\mu_1, \mu_0)
\]
\end{enumerate}
\end{remark}
\proof{} The first claim in (i) can be derived from Lemma  10.3.4 \cite{ambrosio2008gradient} while the second claim is Lemma \ref{le:moreau-convex} (iv). The inequalities in (ii) are consequences of Jensen's inequality.  

(iii) Assume $R>0$ and $\mu_1, \mu_0 \in \mathcal P_2(\mathbb R^D)$ are such that $M_2(\mu_0), M_2(\mu_1) \leq R$. Without loss of generality, we may assume that $0 \leq \Phi_\tau(\mu_1)-\Phi_\tau(\mu_0)$. Let $\bar G \in \Gamma_{o}(\mu_0, \mu_1)$, let   $G \in \Gamma_{o}(\mu_0, \mu_0^\tau)$ and denote by $G_{\mu_0}^{\mu_0^\tau}$ the barycentric projection of $G.$ By Lemma \ref{le:moreau-convex} (iii) 
\[
|\Phi_\tau(\mu_1)-\Phi_\tau(\mu_0)| \leq  
\int_{\mathbb R^D \times \mathbb R^D} \Bigl \langle  {w - G_{\mu_0}^{\mu^\tau_0}(w) \over \tau} ; z-w \Bigr\rangle \bar G(dw, dz)+ {1 \over 2 \tau}W_2^2(\mu_1, \mu_0)
\] 
and hence, by H\"older's inequality  
\[
|\Phi_\tau(\mu_1)-\Phi_\tau(\mu_0)| \leq \biggl\|  {w - G_{\mu_0}^{\mu^\tau_0}(w) \over \tau}\biggr\|_{\mu_0}W_2(\mu_1, \mu_0) + {W_2^2(\mu_1, \mu_0) \over 2 \tau}.
\] 
We then use (ii) to obtain 
\[
|\Phi_\tau(\mu_1)-\Phi_\tau(\mu_0)| \leq  W_2(\mu_1, \mu_0) \biggl( {W_2(\mu^\tau_0, \mu_0) \over \tau^2}  + {W_2(\mu_1, \mu_0) \over 2 \tau}\biggr) \leq W_2(\mu_1, \mu_0) \biggl( \sqrt{ {\Phi_\tau(\mu_0) \over  \tau}}  + {W_2(\mu_1, \mu_0) \over 2 \tau}\biggr) .
\]
We use Lemma \ref{le:moreau-convex} (ii) to conclude. \endproof

\begin{remark}\label{re:moreau-fisher2} Assume $(\mu_k)_k \subset \mathcal P_2(\mathbb R^D)$ converges narrowly to $\mu \in \mathcal P_2(\mathbb R^D)$. If there exists $\eta \in \mathcal P_2(\mathbb R^D)$  such that 
\begin{equation}\label{eq:dec30.2015.6}
\lim_{k \rightarrow \infty} W_2(\mu_k, \eta)=W_2(\mu, \eta) 
\end{equation}
 then $(\mu_k)_k$ converges in the Wasserstein metric to $\mu$; this is by now a standard result.  
\end{remark}

\begin{lemma}\label{le:moreau-fisher2} Suppose $(\mu_n)_n$ is a bounded sequence in $\mathcal P_2(\mathbb R^D)$ that converges narrowly to $\mu \in \mathcal P_2(\mathbb R^D)$. Let $\mu_n^\tau \in J^\Phi_\tau(u_n)$ and let  $G_n \in \Gamma_{o}(\mu_n, \mu_n^\tau)$. 
 \begin{enumerate} 
\item[(i)] Up to  a subsequence, $(\mu_n^\tau)_n \subset \mathcal P_2(\mathbb R^D)$ converges in the Wasserstein metric to some $\mu^\tau.$ Furthermore, a subsequence of $(G_n)_n$ obtained from a second extraction has itself a subsequence which converges narrowly to some $G \in \Gamma_{o}(\mu, \mu^\tau)$.  
\item[(ii)] If $J_\tau^\Phi(\mu)=\{\mu^\tau\}$, then  the whole sequence $(\mu_n^\tau)_n \subset \mathcal P_2(\mathbb R^D)$ converges in the Wasserstein metric to $\mu^\tau.$
\item[(iii)] If $J_\tau^\Phi(\mu)=\{\mu^\tau\}$ and $\Gamma_{o}(\mu, \mu^\tau)$ has a unique element $G$, then the whole sequence $(G_n)_n$  converges narrowly to  $G$. 
\end{enumerate}  
\end{lemma}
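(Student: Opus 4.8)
The plan is to run the standard compactness and lower–semicontinuity argument for Moreau--Yosida minimizers, and then to upgrade the narrow convergence obtained this way to convergence in $W_2$ via Remark \ref{re:moreau-fisher2}.

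\emph{Step 1 (compactness).} First I would observe that, by Lemma \ref{le:moreau-convex}(ii), $\Phi_\tau(\mu_n) \leq \frac{1}{\tau} M_2(\mu_n) + C_\tau$, which stays bounded since $(\mu_n)_n$ is bounded in $\mathcal P_2(\mathbb R^D)$. As $\Phi \geq 0$ and $\Phi_\tau(\mu_n) = \frac{1}{2\tau} W_2^2(\mu_n, \mu_n^\tau) + \Phi(\mu_n^\tau)$, both $W_2(\mu_n, \mu_n^\tau)$ and $\Phi(\mu_n^\tau)$ are bounded; hence $(\mu_n^\tau)_n$ is bounded in $\mathcal P_2(\mathbb R^D)$ and $(G_n)_n$, whose marginals are $\mu_n$ and $\mu_n^\tau$, is bounded in $\mathcal P_2(\mathbb R^D \times \mathbb R^D)$. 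Passing to subsequences I would extract narrow limits $\mu_n^\tau \to \mu^\tau$ and $G_n \to G$; continuity of the marginal maps for the narrow topology then gives $G \in \Gamma(\mu, \mu^\tau)$, and narrow lower semicontinuity of $\Phi$ on bounded sets gives $\mu^\tau \in D(\Phi)$.

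\emph{Step 2 (identification of the limits and Wasserstein convergence).} Next I would combine the narrow lower semicontinuity of $\gamma \mapsto \int |x-y|^2 \, d\gamma$ with the optimality of each $G_n$ to get
\[
2\tau\Phi_\tau(\mu) \leq W_2^2(\mu, \mu^\tau) + 2\tau\Phi(\mu^\tau) \leq \int_{\mathbb R^D \times \mathbb R^D} |x-y|^2 \, dG + 2\tau\Phi(\mu^\tau) \leq \liminf_n 2\tau\Phi_\tau(\mu_n),
\]
and for the reverse inequality I would fix $\hat\mu \in J_\tau^\Phi(\mu)$ (which exists by Remark \ref{re:moreau-fisher}) and use $2\tau\Phi_\tau(\mu_n) \leq W_2^2(\mu_n, \hat\mu) + 2\tau\Phi(\hat\mu)$, letting $n \to \infty$ to obtain $\limsup_n 2\tau\Phi_\tau(\mu_n) \leq W_2^2(\mu, \hat\mu) + 2\tau\Phi(\hat\mu) = 2\tau\Phi_\tau(\mu)$. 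This forces every inequality above to be an equality, so $\mu^\tau \in J_\tau^\Phi(\mu)$ and $G \in \Gamma_{o}(\mu, \mu^\tau)$; moreover, writing $a_n := W_2^2(\mu_n, \mu_n^\tau)$ and $b_n := 2\tau\Phi(\mu_n^\tau)$, from $a_n + b_n \to W_2^2(\mu, \mu^\tau) + 2\tau\Phi(\mu^\tau)$ together with $\liminf_n a_n \geq W_2^2(\mu, \mu^\tau)$ and $\liminf_n b_n \geq 2\tau\Phi(\mu^\tau)$ one deduces $a_n \to W_2^2(\mu, \mu^\tau)$. Hence $W_2(\mu_n, \mu_n^\tau) \to W_2(\mu, \mu^\tau)$, and the triangle inequality together with $\mu_n \to \mu$ gives $W_2(\mu_n^\tau, \mu) \to W_2(\mu^\tau, \mu)$; since $\mu_n^\tau \to \mu^\tau$ narrowly, Remark \ref{re:moreau-fisher2} with $\eta = \mu$ upgrades this to $\mu_n^\tau \to \mu^\tau$ in $W_2$. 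Together with the narrow convergence of $G_n$ to $G \in \Gamma_{o}(\mu, \mu^\tau)$, this would prove (i).

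\emph{Step 3 (the uniqueness statements) and the main obstacle.} Parts (ii) and (iii) I would obtain from (i) by the Urysohn subsequence principle: any subsequence of $(\mu_n)_n$ is again bounded and narrowly convergent to $\mu$, so Step 2 yields a further subsequence along which $\mu_n^\tau \to \mu^\tau \in J_\tau^\Phi(\mu)$ in $W_2$; when $J_\tau^\Phi(\mu) = \{\mu^\tau\}$ the limit is forced, hence the whole sequence $(\mu_n^\tau)_n$ converges to $\mu^\tau$ in $W_2$, which is (ii). Similarly, if in addition $\Gamma_{o}(\mu, \mu^\tau) = \{G\}$, every narrow cluster point of the bounded sequence $(G_n)_n$ lies in $\Gamma_{o}(\mu, \mu^\tau)$ — using (ii) to pin down the second marginal and the convergence $W_2^2(\mu_n, \mu_n^\tau) \to W_2^2(\mu, \mu^\tau)$ along the full sequence — so it equals $G$, and $G_n \to G$ narrowly. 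The one genuinely delicate point, I expect, is the passage to the limit in $W_2^2(\mu_n, \hat\mu)$ and the associated use of Remark \ref{re:moreau-fisher2}: one has to exclude escape of mass to infinity so that the narrow convergence of $(\mu_n^\tau)_n$ really improves to convergence in $W_2$, and this is exactly where the boundedness of $(\mu_n)_n$ in $\mathcal P_2(\mathbb R^D)$ and the minimality defining $\mu_n^\tau$ must be exploited. Everything else is bookkeeping with semicontinuity and the subsequence principle.
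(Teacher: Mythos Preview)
Your argument is essentially the same as the paper's: both extract narrow limits by tightness, use lower semicontinuity of $\Phi$ and of the transport cost to identify $\mu^\tau\in J_\tau^\Phi(\mu)$ and $G\in\Gamma_o(\mu,\mu^\tau)$, then upgrade narrow convergence of $(\mu_n^\tau)$ to $W_2$--convergence via the triangle inequality and Remark~\ref{re:moreau-fisher2}, and finally deduce (ii)--(iii) by the subsequence principle. The point you flag as ``genuinely delicate'' --- passing to the limit in $W_2^2(\mu_n,\hat\mu)$, and the companion step $W_2(\mu_n,\mu)\to 0$ used in the triangle inequality --- is treated by the paper in exactly the same way: the paper writes $|W_2(\mu,\mu_{n_k}^\tau)-W_2(\mu,\bar\mu)|\le W_2(\mu,\mu_{n_k})+|W_2(\mu_{n_k}^\tau,\mu_{n_k})-W_2(\mu,\bar\mu)|$ and lets the first term vanish, so in effect both proofs are using $W_2$--convergence of $(\mu_n)$ rather than mere narrow convergence (which is what actually holds in the two applications of the lemma in Sections~\ref{sec:projection} and~\ref{sec:existence-approximate}).
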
 
\proof{}  (i) Assume $(\mu_n)_n \subset \mathcal P_2(\mathbb R^D)$ narrowly converges  to $\mu$. Since  $\Phi \geq 0$, we use Lemma \ref{le:moreau-convex} to conclude that   
\begin{equation}\label{eq:dec30.2015.2.5}
\sup_n W_2(\mu_n, \mu_n^\tau)<\infty \quad \hbox{and} \quad \sup_n \Phi(\mu_{n}^\tau)<\infty. 
\end{equation}  
This, together with the fact that $(\mu_n)_n $ is bounded in $\mathcal P_2(\mathbb R^D)$, imply that $(\mu_n^\tau)_n $ is bounded in $\mathcal P_2(\mathbb R^D).$ Consider a subsequence $(\mu_{n_k}^\tau)_k \subset \mathcal P_2(\mathbb R^D)$. Since bounded subsets of  $\mathcal P_2(\mathbb R^D)$ are tight (cf., e.g., Remark 5.1.5 \cite{ambrosio2008gradient}) we may assume without loss of generality that $(\mu_{n_k}^\tau)_k \subset \mathcal P_2(\mathbb R^D)$ converges narrowly to some $\bar \mu \in \mathcal P_2(\mathbb R^D)$. Because $(G_{n_k}^\tau)_k \subset \mathcal P(\mathbb R^D \times \mathbb R^D)$ is tight, extracting a subsequence if necessary, we may assume that $(G_{n_k}^\tau)_k$ converges narrowly to some $G.$ By the stability of optimal transport plans for the narrow convergence (cf., e.g., Proposition 7.1.3 \cite{ambrosio2008gradient}), $G \in \Gamma_{o}(\mu, \bar \mu)$ and 
\begin{equation}\label{eq:dec30.2015.4}
\liminf_{k \rightarrow \infty}  W_2(\mu_{n_k}, \mu_{n_k}^\tau) \geq W_2(\mu, \bar \mu).
\end{equation} 
The lower semicontinuity of $\Phi$ for the narrow convergence and the second inequality in (\ref{eq:dec30.2015.2.5}) allow us to assert that   
\begin{equation}\label{eq:dec30.2015.3}
\infty> \liminf_{k \rightarrow \infty} \Phi(\mu_{n_k}^\tau) \geq \Phi(\bar \mu).
\end{equation}  
If $\nu \in \mathcal P_2(\mathbb R^D)$ then 
\[
\Phi(\nu)+{W_2^2(\mu_{n_k}, \nu) \over 2 \tau} \geq \Phi(\mu_{n_k}^\tau) +{W_2^2(\mu_{n_k}, \mu_{n_k}^\tau) \over 2 \tau}.
\]
Therefore, by (\ref{eq:dec30.2015.4}) and (\ref{eq:dec30.2015.3})
\begin{equation}\label{eq:dec30.2015.5}
\Phi(\nu) +{W_2^2(\mu, \nu) \over 2 \tau} \geq  \Phi(\bar \mu) +{W_2^2(\mu, \bar \mu) \over 2 \tau}.
\end{equation} 
Hence, $\bar \mu \in J^\Phi_\tau(\mu)$.  Would the inequality in (\ref{eq:dec30.2015.4}) be strict, so would be the one in (\ref{eq:dec30.2015.5}), yielding a contradiction. Thus, 
\[ 
\lim_{k \rightarrow \infty}  W_2(\mu_{n_k}, \mu_{n_k}^\tau) = W_2(\mu, \bar \mu).
\]    
The identities  
\begin{eqnarray}
|W_2(\mu, \mu_{n_k}^\tau)-W_2(\mu, \bar \mu)| 
&=&
\Bigl|\bigl(W_2(\mu, \mu_{n_k}^\tau)-W_2(\mu_{n_k}^\tau, \mu_{n_k})\bigr) + 
\bigl(W_2(\mu_{n_k}^\tau, \mu_{n_k}) -W_2(\mu,  \bar \mu)\bigr) \Bigr| \nonumber\\
&\leq &
W_2(\mu, \mu_{n_k}) + \bigl|W_2(\mu_{n_k}^\tau, \mu_{n_k}) -W_2(\mu,  \bar \mu)  \bigr|
\nonumber
\end{eqnarray}
yield   
 \[
 \lim_{k \rightarrow \infty} W_2(\mu, \mu_{n_k}^\tau)= W_2(\mu, \bar \mu).
 \]
 We apply Remark \ref{re:moreau-fisher2} to conclude that $(\mu_{n_k}^\tau)_n \subset \mathcal P_2(\mathbb R^D)$ converges in the Wasserstein metric to $\bar \mu.$ 
 
(ii) By (i), if $\mu^\tau$ is unique, every subsequence of  $(\mu_n)_n $ admits itself a subsequence converging to $\mu^\tau.$ Hence, the whole sequence must converge to $\mu^\tau.$ 

(iii) As in (ii), we use (i) to conclude that if $\mu^\tau$ is unique and $G$ is the unique element of $\Gamma_{o}(\mu, \mu^\tau)$, then the whole sequence  $(G_n)_n $ must converge to $G.$ \endproof

%
%
%
\section{Functions on $\mathcal P_2(\mathbb R^{2d})$ depending only on first marginals }\label{sec:projection}  
To emphasize the difference between the spatial and velocity variables, we set 
\[M:=\mathbb R^d, \quad TM:=M \times \mathbb R^d, 
\] 
and use notation such as $x \in M$, $(x, a) \in M \times M$, $(x,v) \in TM$, and so forth.
 
Suppose  
\[ \Phi: \mathcal P_2(TM) \rightarrow (-\infty, \infty], \quad \phi: \mathcal P_2(M) \rightarrow (-\infty, \infty]
\] 
 are lower semicontinuous for the narrow convergence and  
\[ \Phi(\mu) = \phi(\pi^1_\# \mu) \qquad \forall \, \mu \in \mathcal P_2(TM).\] 

In this section we study the relation between the superdifferential of the Moreau--Yosida approximations $\Phi_\tau$ at $\mu \in \mathcal P_2(\mathbb R^{2D})$ and that of $\phi_\tau$ at $\pi^1_\# \mu \in \mathcal P_2(\mathbb R^{D}).$  The set 
\[
S:=\{(x,v,a,b) \in TM \times TM \; | \; v=b\}
\]
plays an important role in our study.

\begin{definition}\label{de:optimal} Let $\mu \in \mathcal P_2(TM)$, $\eta \in \mathcal P_2(M)$, $\pi^{1}_\# \mu=\varrho$ and let $ \gamma \in \Gamma_{o}(\varrho, \eta)$. Let $(\mu_x)_x$ be the disintegration of $\mu$ with respect to $\varrho$ in the sense that  
\[
\int_{TM} l(x,v) \mu(dx, dv)= \int_M \varrho(dx) \int_{\mathbb R^d} l(x,v) \mu_x(dv) \qquad \forall \; l \in C_b(TM).
\]
\begin{enumerate} 
\item[(i)] We define the Borel measure $G:=   G^{\mu, \gamma}$ on $TM \times TM$  by 
\begin{equation}\label{eq:jan01.2016.8}
\int_{TM \times TM} g(x,v,a,b) G(dx, dv, da, db)=\int_{M \times M}  \gamma(dx, da) \int_{\mathbb R^d} g(x,v,a,v) \mu_x(dv) \qquad \forall g \in C_c\bigl( TM \times TM\bigr)
\end{equation}  
\item[(ii)] We define the Borel measure $m^{\mu,  \gamma} $ by 
\begin{equation}\label{eq:jan01.2016.8late}
\int_{TM} g(a,b)  m^{\mu,  \gamma} (da, db)= \int_{M \times M} \gamma(dx, da) \int_{\mathbb R^d} g(a,b) \mu_x(db) \qquad \forall g \in C_c(TM).
\end{equation}
\end{enumerate}
\end{definition}

\begin{remark}\label{re:optimal2} Using the above notation, the following hold: 
\begin{enumerate} 
\item[(i)]  $G^{\mu, \gamma}$ is supported by the closed set $S$. 
\item[(ii)]  $ G^{\mu, \gamma} \in \Gamma_{o}(\mu, m^{\mu, \gamma})$. 
\item[(iii)]  $\pi^1_\#m^{\mu, \gamma} =\eta.$ 
\item[(iv)]  $W_2(\varrho, \eta)=W_2(\mu, m^{\mu, \gamma}).$ 
\end{enumerate}
\end{remark} 
\proof{} (i) Observe that 
\[
\int_{TM \times TM} |v-b|^2  G^{\mu, \gamma} (dx, dv, da, db)= \int_{M \times M}  \gamma(dx, da) \int_{\mathbb R^d} 0 \mu_x(dv)=0,
\] 
which proves that $G^{\mu, \gamma}$ is supported by the closed set $S.$ 

(ii) Let $g \in C_c(TM).$  We have 

\begin{eqnarray} 
\int_{TM \times TM} g(x,v) G^{\mu,  \gamma}(dx, dv, da, db) 
&=& \int_{M \times M}\gamma(dx, da) \int_{\mathbb R^d} g(x,v) \mu_x(dv)  \nonumber\\
&= &
\int_{M}\varrho(dx) \int_{\mathbb R^d} g(x,v) \mu_x(dv)\nonumber\\ 
&= & \int_{TM}\varrho(dx) g(x,v) \mu(dx, dv). \label{eq:jan01.2016.10}
\end{eqnarray}  

Similarly,  
\begin{eqnarray} 
\int_{TM \times TM} g(a,b)G^{\mu, \gamma}(dx, dv, da, db) 
&=& \int_{M \times M} \gamma(dx, da) \int_{\mathbb R^d} g(a,v) \mu_x(dv)  \nonumber\\
&=& \int_{M \times M} \gamma(dx, da) \int_{\mathbb R^d} g(a,b) \mu_x(db)  \nonumber\\
&= &
\int_{T M} g(a,b)  m^{\mu, \gamma}(da, db). \label{eq:jan01.2016.11}
\end{eqnarray}  
By (\ref{eq:jan01.2016.10}) and (\ref{eq:jan01.2016.11}), $G^{\mu,  \gamma} \in \Gamma(\mu, m^{\mu, \gamma}).$

To conclude that $G \in \Gamma_{o}(\varrho, \eta_0)$, it suffices to show that the support of $G$ is cyclically monotone (cf. e.g. Section 6.2.3 \cite{ambrosio2008gradient}). Let $\{(x_i,v_i,a_i,b_i)\}_{i=1}^n \subset {\rm spt\,} G$ and let $\sigma$ be a permutation of $n$ letters. By (i), $b_i=v_i$, and therefore, using the fact that $\{(x_i,a_i)\}_{i=1}^n \subset {\rm spt\,} \gamma$ and $\gamma \in \Gamma_{o}(\varrho, \nu_0)$ we conclude that 
\[
\sum_{i=1}^n |(x_i,v_i)-(a_i,b_i)|^2=  \sum_{i=1}^n |x_i-a_i|^2
\leq \sum_{i=1}^n |x_i-a_{\sigma(i)}|^2+  \sum_{i=1}^n |v_i -b_{\sigma(i)}|^2.
\] 
Equivalently, this means  
\[
\sum_{i=1}^n |(x_i,v_i)-(a_i,b_i)|^2
\leq  \sum_{i=1}^n |(x,v_i)-(a_{\sigma(i)},b_{\sigma(i)})|^2.
\]  
Thus, the support of  $G^{\mu, \eta, \gamma}$ is cyclically monotone, which concludes the proof of (ii).

(iii) Let $g \in C_c(M).$ We have 
\[
\int_{TM \times TM} g(a)\bar m( da, db)= \int_{M \times M}\bar \gamma(dx, da) \int_{\mathbb R^d} g(a) \mu_x(db)= 
\int_{M \times M}g(a) \bar \gamma(dx, da) =\int_{M}g(a) \eta(da) .
\] 
Thus $\pi^1_{\#} \bar m=\eta.$ 

(iv)  Using the fact that by (i) $G^{\mu, \gamma}$ is supported by $S$ and by (ii) it is optimal, we have 
\[ 
W_2^2(\mu, m^{\mu, \gamma})=\int_{TM \times TM}|(x,v)-(a,b)|^2 G^{\mu, \gamma}(dx, dv, da, db)
=\int_{TM \times TM} |x-a|^2 G(dx, dv, da, db).
\]
Since 
\[\pi^{1,3}_\# G^{\mu, \gamma}= \gamma \in \Gamma_{o}(\varrho, \eta),\] 
the previous identity becomes  $W_2^2(\mu, m^{\mu, \gamma})=W_2^2(\varrho, \eta).$ \endproof

\begin{lemma}\label{le:bound-on-W2} Let $\mu \in \mathcal P_2(TM)$ and let $\varrho, \eta \in \mathcal P_2(M)$ be such that $\pi^{1}_\# \mu=\varrho.$ 
\begin{enumerate} 
\item[(i)] We have 
\begin{equation}\label{eq:jan01.2016.7}
\inf_{m \in \mathcal P_2(TM)} \Bigl\{ W_2^2(\mu, m)\; | \; m \in \mathcal P_2(TM), \pi^1_\# m= \eta \Bigr\}= W_2^2(\varrho, \eta).
\end{equation}  
\item[(ii)]  If $\gamma \in \Gamma_{o}(\varrho, \eta)$, then $m^{\mu, \gamma}$ minimizes (\ref{eq:jan01.2016.7}).  
\item[(iii)] If $\bar m$ minimizes (\ref{eq:jan01.2016.7}) and $\bar G \in \Gamma_{o}(\mu, \bar m)$, then $\bar \gamma:=\pi^{1,3}_\# \bar G \in \Gamma_{o}(\varrho, \eta)$ and $\bar G$ is supported by $S.$ 
\item[(iv)] If $\varrho\ll\mathcal L^D$, then $\bar m= m^{\mu, \gamma}$ is the unique minimizer in (\ref{eq:jan01.2016.7}) and $\Gamma_{o}(\mu, \bar m)=\{G^{\mu, \gamma}\}.$
\end{enumerate}
\end{lemma}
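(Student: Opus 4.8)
The plan is to establish the four statements in order, since each builds on its predecessor, and to lean heavily on the construction $G^{\mu,\gamma}$, $m^{\mu,\gamma}$ from Definition~\ref{de:optimal} together with the four properties recorded in Remark~\ref{re:optimal2}. For part (i), I would prove the two inequalities separately. For ``$\geq$'': given any $m \in \mathcal P_2(TM)$ with $\pi^1_\# m = \eta$ and any $H \in \Gamma_{o}(\mu, m)$, the plan $\pi^{1,3}_\# H$ lies in $\Gamma(\varrho, \eta)$ (since $\pi^1_\# \mu = \varrho$ and $\pi^1_\# m = \eta$), hence
\[
W_2^2(\mu, m) = \int_{TM \times TM} \bigl(|x-a|^2 + |v-b|^2\bigr)\, H \;\geq\; \int_{M \times M} |x-a|^2\, \pi^{1,3}_\# H \;\geq\; W_2^2(\varrho, \eta).
\]
For ``$\leq$'': pick any $\gamma \in \Gamma_{o}(\varrho, \eta)$ and use $m = m^{\mu,\gamma}$; by Remark~\ref{re:optimal2}(iii) this is an admissible competitor, and by Remark~\ref{re:optimal2}(iv) it achieves $W_2^2(\mu, m^{\mu,\gamma}) = W_2^2(\varrho, \eta)$. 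This simultaneously proves (ii).

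For part (iii), suppose $\bar m$ minimizes and $\bar G \in \Gamma_{o}(\mu, \bar m)$. Write $\bar \gamma := \pi^{1,3}_\# \bar G \in \Gamma(\varrho, \eta)$. By optimality of $\bar m$ and part (i),
\[
W_2^2(\varrho, \eta) = W_2^2(\mu, \bar m) = \int_{TM \times TM} \bigl(|x-a|^2 + |v-b|^2\bigr)\, \bar G \;\geq\; \int_{M\times M}|x-a|^2\, \bar\gamma + 0 \;\geq\; W_2^2(\varrho,\eta).
\]
Forcing equality throughout gives $\int |v-b|^2\, \bar G = 0$, i.e.\ $\bar G$ is supported on $S$, and $\int |x-a|^2\, \bar\gamma = W_2^2(\varrho,\eta)$, i.e.\ $\bar\gamma \in \Gamma_{o}(\varrho,\eta)$.

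For part (iv), assume $\varrho \ll \mathcal L^D$. For uniqueness of the minimizer $\bar m$ and of the optimal plan, the idea is: given any minimizer $\bar m$ with $\bar G \in \Gamma_{o}(\mu,\bar m)$, part (iii) gives $\bar\gamma := \pi^{1,3}_\#\bar G \in \Gamma_{o}(\varrho,\eta)$, and since $\varrho$ is absolutely continuous, $\Gamma_{o}(\varrho,\eta) = \{\gamma\}$ is a singleton with $\gamma = ({\bf id}\times T^\eta_\varrho)_\#\varrho$ (by the Brenier-type statement quoted in Section~\ref{sec:prelim}). Then I must show $\bar G$ is forced to equal $G^{\mu,\gamma}$: because $\bar G$ is supported on $S$, we have $b = v$ $\bar G$-a.e., so $\bar G$ is determined by its marginal on the $(x,v,a)$ variables; and since $\pi^{1,2}_\# \bar G = \mu$ and $\pi^{1,3}_\# \bar G = \gamma = ({\bf id}\times T^\eta_\varrho)_\#\varrho$ is concentrated on a graph over $x$, one disintegrates $\bar G$ over $x$ and checks that the only compatible measure is precisely the one in \eqref{eq:jan01.2016.8}; consequently $\bar m = \pi^{3,4}_\#\bar G = m^{\mu,\gamma}$ and $\Gamma_{o}(\mu,\bar m) = \{G^{\mu,\gamma}\}$. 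The main obstacle I anticipate is this last rigidity argument in (iv): one must argue carefully that absolute continuity of $\varrho$ (hence $a$ being a deterministic function of $x$ under $\gamma$) combined with support on $S$ (hence $b$ a deterministic function, namely $b=v$) pins down the full joint law $\bar G$ from the fixed marginals $\mu$ and $\gamma$; the clean way is to observe that $\bar G = ({\bf id}_{TM}, T^\eta_\varrho\circ\pi^1, \pi^2)_\#\mu$, which is exactly $G^{\mu,\gamma}$.
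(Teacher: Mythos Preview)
Your proposal is correct and follows essentially the same route as the paper: parts (i)--(iii) are argued identically (project an optimal plan to its $(x,a)$-marginal for the lower bound, use $m^{\mu,\gamma}$ from Remark~\ref{re:optimal2} for the upper bound, and read off (iii) from the equality case). For (iv) the paper carries out exactly your ``clean way'' observation as an explicit chain of integral identities against test functions, using $\bar\gamma=({\bf id}\times\nabla u)_\#\varrho$ and support on $S$ to rewrite $\int g(x,v,a,b)\,\bar G$ as $\int g(x,v,\nabla u(x),v)\,\mu(dx,dv)$ and then recognise this as $\int g\,G^{\mu,\gamma}$; this is precisely the pushforward identity $\bar G=({\bf id}_{TM},T^\eta_\varrho\circ\pi^1,\pi^2)_\#\mu$ you wrote down.
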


\proof{}  Let $m \in \mathcal P_2(TM)$ be such that  $\pi^1_\# m= \eta$ and let $G \in \Gamma_{o}(\mu, m).$ Set $\bar \gamma:= \pi^{1,3}_\# G \in \Gamma(\varrho, \eta).$ We have 

\begin{eqnarray} 
W_2^2(\mu, m)&=& \int_{TM \times TM}  |(x,v)-(a, b)|^2 G(dx, dv, da, db) \nonumber\\ 
&=& \int_{TM \times TM} \bigl( |x-a|^2+ |v-b|^2\bigr) G(dx, dv, da, db)  \nonumber\\
&= &
\int_{M \times M}|x-a|^2\bar \gamma(dx, da)+ \int_{TM \times TM} |v-b|^2 G(dx, dv, da, db)
\nonumber\\ 
&\geq & W_2^2(\varrho, \eta). \label{eq:jan01.2016.9}
\end{eqnarray} 
Observe that the inequality in (\ref{eq:jan01.2016.9}) is strict unless $\bar \gamma \in \Gamma_{o}(\varrho, \eta)$ and $G$ is supported by $S.$  In light of Remark \ref{re:optimal2} and (\ref{eq:jan01.2016.9})   
\[
W_2^2(\mu, m) \geq W_2^2(\varrho, \eta)= W_2^2(\mu, m^{\mu, \eta, \gamma}).
\] 
Hence, we have established (i) and (ii).

(iii) From the previous result, if $\bar m$ is another minimizer in (\ref{eq:jan01.2016.7}) and $\bar G \in \Gamma_{o}(\mu, m)$, then $ \bar G$ must be supported by $S$ and we must have $\bar \gamma:=\pi^{1,3}_\# \bar G \in \Gamma_{o}(\varrho, \eta)$, otherwise the inequality in (\ref{eq:jan01.2016.9}) would be strict. 

(iv) Assume now that $\varrho\ll\mathcal L^D$ and let $u: \mathbb R^D \rightarrow (-\infty, \infty]$ be a lower semicontinuous convex function such that $({\bf id} \times \nabla u)_\# \varrho=\bar \gamma.$ The first of the following identities is due to (iii). If $g \in C_c^\infty(TM \times TM)$, then 
\begin{eqnarray} 
\int_{TM \times TM}g(x,v,a,b) \bar G(dx, dv, da, db) &=& \int_{TM \times TM}g(x,v,\nabla u(x) , v) \bar G(dx, dv, da, db) \nonumber\\ 
&=& \int_{TM} g(x,v,\nabla u(x) , v) \mu(dx, dv)  \nonumber\\ 
&=& \int_{M} \varrho(dx) \int_{\mathbb R^D} g(x,v,\nabla u(x) , v)\mu_x(dv)  \nonumber\\
&= & \int_{TM} \gamma(dx, da) \int_{\mathbb R^D} g(x,v,\nabla u(x) , v)\mu_x(dv)  \nonumber\\
&= & \int_{TM} \gamma(dx, da) \int_{\mathbb R^D} g(x,v,a , v)\mu_x(dv)  \nonumber\\
&=& \int_{TM \times TM}g(x,v,a,b) G^{\mu, \gamma}(dx, dv, da, db).
\label{eq:jan03.2016.1}
\end{eqnarray} \endproof

\begin{definition}\label{de:metric_slope} Let $(\mathcal S, {\rm dist})$ be a metric space and let $\phi: \mathcal S \rightarrow [-\infty, \infty].$ If $v \in D(\phi)$, we define the global (metric) slope of $\phi$ at $v$ to be
\[
|\partial \phi|(v)= \limsup_{w \rightarrow v}{(\phi(v)-\phi(w))^+ \over {\rm dist}(w,v)}.
\]
\end{definition}

\begin{lemma}\label{le:subdifferential2} Let $\mu \in \mathcal P_2(TM)$ and let $\pi^{1}_\# \mu=\varrho.$ We have 
\[
|\partial \Phi|(\mu) = |\partial \phi|(\varrho).
\]
\end{lemma}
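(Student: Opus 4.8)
The plan is to prove the two inequalities $|\partial\Phi|(\mu)\le|\partial\phi|(\varrho)$ and $|\partial\phi|(\varrho)\le|\partial\Phi|(\mu)$ separately, exploiting in both directions the constructions from Definition~\ref{de:optimal} and the geometric identities in Remark~\ref{re:optimal2} and Lemma~\ref{le:bound-on-W2}, especially the fact that $W_2(\mu,m^{\mu,\gamma})=W_2(\varrho,\eta)$ whenever $\gamma\in\Gamma_o(\varrho,\eta)$.

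\emph{Step 1: $|\partial\Phi|(\mu)\le|\partial\phi|(\varrho)$.} Fix $\eta\in\mathcal P_2(M)$ close to $\varrho$ and pick $\gamma\in\Gamma_o(\varrho,\eta)$. Form $m:=m^{\mu,\gamma}\in\mathcal P_2(TM)$. By Remark~\ref{re:optimal2}(iii) we have $\pi^1_\# m=\eta$, so $\Phi(m)=\phi(\pi^1_\# m)=\phi(\eta)$, and by Remark~\ref{re:optimal2}(iv), $W_2(\mu,m)=W_2(\varrho,\eta)$. Hence
\[
\frac{(\Phi(\mu)-\Phi(m))^+}{W_2(\mu,m)}=\frac{(\phi(\varrho)-\phi(\eta))^+}{W_2(\varrho,\eta)}.
\]
As $\eta\to\varrho$ in $\mathcal P_2(M)$, the corresponding $m$ satisfies $W_2(\mu,m)=W_2(\varrho,\eta)\to0$, so these difference quotients are among those defining $|\partial\Phi|(\mu)$; taking the appropriate limsup over $\eta$ and comparing with Definition~\ref{de:metric_slope} yields $|\partial\Phi|(\mu)\ge$ the limsup of the right-hand side over $\eta\to\varrho$, which is exactly $|\partial\phi|(\varrho)$. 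Wait — I have the inequality the other way; let me restate: this argument produces, for each $\eta$, a competitor $m$ for the slope of $\Phi$ at $\mu$ realizing the ratio $(\phi(\varrho)-\phi(\eta))^+/W_2(\varrho,\eta)$, hence $|\partial\Phi|(\mu)\ge|\partial\phi|(\varrho)$. So Step~1 actually gives the $\ge$ direction.

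\emph{Step 2: $|\partial\Phi|(\mu)\le|\partial\phi|(\varrho)$.} Now let $m_n\to\mu$ in $\mathcal P_2(TM)$ be a sequence realizing $|\partial\Phi|(\mu)$, i.e.\ $(\Phi(\mu)-\Phi(m_n))^+/W_2(\mu,m_n)\to|\partial\Phi|(\mu)$ (we may assume $\Phi(\mu)<\infty$, else there is nothing to prove, and discard terms with $\Phi(m_n)=\infty$). Set $\eta_n:=\pi^1_\# m_n$, so $\Phi(m_n)=\phi(\eta_n)$ and $\Phi(\mu)=\phi(\varrho)$. Since the first projection is $1$-Lipschitz from $(\mathcal P_2(TM),W_2)$ to $(\mathcal P_2(M),W_2)$ (because pushing an optimal plan for $(\mu,m_n)$ through $\pi^{1,3}$ gives a plan for $(\varrho,\eta_n)$ with smaller cost), we have $W_2(\varrho,\eta_n)\le W_2(\mu,m_n)$, hence $\eta_n\to\varrho$, and
\[
\frac{(\Phi(\mu)-\Phi(m_n))^+}{W_2(\mu,m_n)}=\frac{(\phi(\varrho)-\phi(\eta_n))^+}{W_2(\mu,m_n)}\le\frac{(\phi(\varrho)-\phi(\eta_n))^+}{W_2(\varrho,\eta_n)}.
\]
Taking $n\to\infty$ gives $|\partial\Phi|(\mu)\le\limsup_n(\phi(\varrho)-\phi(\eta_n))^+/W_2(\varrho,\eta_n)\le|\partial\phi|(\varrho)$. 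Combining Steps~1 and~2 completes the proof.

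\emph{Main obstacle.} The only subtlety I anticipate is the boundary case $W_2(\varrho,\eta_n)=0$ (i.e.\ $\eta_n=\varrho$) in Step~2, where the displayed division is not literally valid; this is handled by noting that then $\phi(\eta_n)=\phi(\varrho)$, so the numerator on the left, $(\Phi(\mu)-\Phi(m_n))^+$, equals $(\phi(\varrho)-\phi(\varrho))^+=0$ and the corresponding term contributes nothing to the limsup, so such indices may simply be discarded. One should also be slightly careful that the sequence $m_n$ in Step~2 can be taken with $m_n\ne\mu$ and $\Phi(m_n)<\infty$; this is immediate from the definition of $\limsup_{w\to v}$. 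Everything else is a direct bookkeeping of the Lipschitz estimate for $\pi^1_\#$ and the mass-transport identity $W_2(\mu,m^{\mu,\gamma})=W_2(\varrho,\eta)$ already established in Lemma~\ref{le:bound-on-W2}.
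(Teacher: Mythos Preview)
Your proposal is correct and is precisely the argument the paper has in mind: the paper's one-line proof simply invokes Lemma~\ref{le:bound-on-W2}, saying that beyond the ``straightforward'' inequality $|\partial\Phi|(\mu)\le|\partial\phi|(\varrho)$ (your Step~2, using that $\pi^1_\#$ is $1$-Lipschitz), the construction $m^{\mu,\gamma}$ with $W_2(\mu,m^{\mu,\gamma})=W_2(\varrho,\eta)$ furnishes competitors witnessing the reverse inequality (your Step~1). Your write-up is the natural unpacking of that sentence, including the correct handling of the degenerate case $\eta_n=\varrho$; the only cosmetic issue is the mid-proof self-correction in Step~1, which you should clean up in a final version.
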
  
\proof{} Lemma \ref{le:bound-on-W2} implies not only the straightforward inequality $|\partial \Phi|(\mu) \leq |\partial \phi|(\varrho)$, but in fact, it implies  that $|\partial \Phi|(\mu) = |\partial \phi|(\varrho)$. \endproof

\begin{lemma}\label{le:subdifferential1} Let $\mu \in \mathcal P_2(TM)$, let $\pi^{1}_\# \mu=\varrho$ and let $(\mu_x)_x$ be the disintegration of $\mu$ with respect to $\varrho.$ 
\begin{enumerate} 
\item[(i)] We have $\bar \xi_1 \in \bar{\partial} \phi(\varrho)$ if   
\[\xi =\left(
\begin{array}{c}
\xi_1\\
\xi_2\\
\end{array}
\right) \in \bar{\partial} \Phi(\mu) \quad \hbox{and} \quad \bar \xi_1(x)= \int_{\mathbb R^d} \xi_1(x, v) \mu_x(dv).\]  
\item[(ii)]  We have 
\[
||\xi||_\mu\geq ||\xi_1||_\varrho
\]
and the inequality is strict unless $\xi_2 = 0$ $\mu-$a.e. and $\xi_1(x,v)$ is independent on $v.$ 
\item[(iii)] If $\bar \partial \Phi(\mu) \not =\emptyset $, then $||\nabla_\mu \Phi(\mu)||_\mu \geq  ||\nabla_\varrho \phi(\varrho)||_\varrho.$ 
\end{enumerate}
\end{lemma}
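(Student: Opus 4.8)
The plan is to run everything through the transfer measures $m^{\mu,\gamma}$ and the optimal plans $G^{\mu,\gamma}$ of Definition \ref{de:optimal}, exploiting the key fact (Remark \ref{re:optimal2}(i)) that $G^{\mu,\gamma}$ is concentrated on the set $S=\{(x,v,a,b)\in TM\times TM: v=b\}$; this is precisely what kills the velocity component $\xi_2$ of a superdifferential and reduces a statement on $TM$ to one on $M$.

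For (i), I would first note $\bar\xi_1\in L^2(\varrho)$: Jensen's inequality gives $|\bar\xi_1(x)|^2\le\int_{\mathbb R^d}|\xi_1(x,v)|^2\mu_x(dv)$, hence $\|\bar\xi_1\|_\varrho^2\le\|\xi_1\|_\mu^2\le\|\xi\|_\mu^2<\infty$. Next, fix $\eta\in D(\phi)$ and an arbitrary $\gamma\in\Gamma_{o}(\varrho,\eta)$, and put $m:=m^{\mu,\gamma}$, $G:=G^{\mu,\gamma}$. By Remark \ref{re:optimal2} we have $G\in\Gamma_{o}(\mu,m)$, $\pi^1_\# m=\eta$ (so $\Phi(m)=\phi(\eta)$, and in particular $m\in D(\Phi)$), and $W_2(\mu,m)=W_2(\varrho,\eta)$. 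Applying the defining inequality of $\xi\in\bar{\partial}\Phi(\mu)$ (Definition \ref{de:differentiability1}) with this particular plan $G$ gives
\[
\phi(\eta)-\phi(\varrho)=\Phi(m)-\Phi(\mu)\le\int_{TM\times TM}\bigl(\xi_1(x,v)\cdot(a-x)+\xi_2(x,v)\cdot(b-v)\bigr)\,G(dx,dv,da,db)+o\bigl(W_2(\varrho,\eta)\bigr).
\]
Since $G$ is carried by $S$ we have $b=v$ there, so the $\xi_2$-term vanishes; and the defining identity (\ref{eq:jan01.2016.8}) for $G$, together with $\int_{\mathbb R^d}\xi_1(x,v)\mu_x(dv)=\bar\xi_1(x)$, collapses the remaining integral to $\int_{M\times M}\bar\xi_1(x)\cdot(a-x)\,\gamma(dx,da)$. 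As $\gamma$ ranged over all of $\Gamma_{o}(\varrho,\eta)$, taking the infimum over $\gamma$ produces exactly the inequality defining $\bar{\partial}\phi(\varrho)$, so $\bar\xi_1\in\bar{\partial}\phi(\varrho)$.

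Parts (ii) and (iii) are then short. For (ii), $\|\xi\|_\mu^2=\|\xi_1\|_\mu^2+\|\xi_2\|_\mu^2\ge\|\xi_1\|_\mu^2=\int_M\varrho(dx)\int_{\mathbb R^d}|\xi_1(x,v)|^2\mu_x(dv)\ge\int_M|\bar\xi_1(x)|^2\varrho(dx)=\|\bar\xi_1\|_\varrho^2$, the last step being Jensen's inequality applied on each fiber; equality in the first step forces $\xi_2=0$ $\mu$-a.e., and equality in the Jensen step forces $\xi_1(x,\cdot)$ to be $\mu_x$-a.e.\ constant for $\varrho$-a.e.\ $x$, i.e.\ $\xi_1$ independent of $v$. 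For (iii), assume $\bar{\partial}\Phi(\mu)\ne\emptyset$ and let $\nabla_\mu\Phi(\mu)=(\Xi_1,\Xi_2)$ be its minimal-norm element; by (i) the barycentric projection $\bar\Xi_1$ lies in $\bar{\partial}\phi(\varrho)$, so $\bar{\partial}\phi(\varrho)\ne\emptyset$ and $\nabla_\varrho\phi(\varrho)$ exists as its minimal-norm element, whence $\|\nabla_\varrho\phi(\varrho)\|_\varrho\le\|\bar\Xi_1\|_\varrho$; combining with (ii) applied to $\xi=\nabla_\mu\Phi(\mu)$ gives $\|\nabla_\varrho\phi(\varrho)\|_\varrho\le\|\bar\Xi_1\|_\varrho\le\|\nabla_\mu\Phi(\mu)\|_\mu$.

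The principal subtlety is the bookkeeping in (i): one must apply the superdifferential inequality for $\Phi$ with the specific, non-generic plan $G^{\mu,\gamma}$ (legitimate, since that inequality holds for every optimal plan), check that $m^{\mu,\gamma}\in D(\Phi)$, and observe that the remainder transfers cleanly because $W_2(\mu,m^{\mu,\gamma})=W_2(\varrho,\eta)$ by Remark \ref{re:optimal2}(iv), so that $o\bigl(W_2(\mu,m^{\mu,\gamma})\bigr)=o\bigl(W_2(\varrho,\eta)\bigr)$ uniformly as $\eta\to\varrho$. With these observations in place, the reduction from $TM$ to $M$ is forced by ${\rm spt\,}G^{\mu,\gamma}\subset S$, and no further difficulty arises.
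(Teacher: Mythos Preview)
Your proof is correct and follows essentially the same route as the paper: in (i) you pass from $\mu$ to $\varrho$ via the optimal plan $G^{\mu,\gamma}$ and the transfer measure $m^{\mu,\gamma}$ of Definition~\ref{de:optimal}, using Remark~\ref{re:optimal2} (support on $S$, optimality, marginal, and equality of Wasserstein distances) exactly as the paper does; (ii) is the same Jensen argument on the fibers, and (iii) is obtained from (i) and (ii) just as the paper indicates. Your write-up is in fact a bit more explicit than the paper's (you verify $\bar\xi_1\in L^2(\varrho)$, note that the superdifferential inequality holds for every optimal plan so may be applied to $G^{\mu,\gamma}$, and spell out (iii)), and you correctly read the quantity in (ii) as $\|\bar\xi_1\|_\varrho$, which is what the paper's proof actually establishes.
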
 

\proof{} (i) Let $\eta \in \mathcal P_2(M)$ and let $\gamma \in \Gamma_{o}(\varrho, \eta).$ Suppose $\xi$ and $\xi_1$ are as above. By Remark \ref{re:optimal2}, $G^{\mu,  \gamma} \in \Gamma_{o}(\mu, m^{\mu, \gamma})$ and $\pi^1_\#m^{\mu, \gamma} =\eta.$ Thus, (setting $w=(x,v)$ and $z=(a,b)$)
\begin{equation}\label{eq:jan02.2016.1}
\phi(\eta)-\phi(\varrho)= \Phi(m^{\mu,  \gamma})-\Phi(\mu) \leq \int_{TM  \times TM} \Bigl \langle 
\xi(w) ; z-w \Bigr\rangle G^{\mu, \eta, \gamma} + o\Bigl(W_2(\mu, m^{\mu, \eta, \gamma}) \Bigr)
\end{equation} 
By Remark \ref{re:optimal2} (iv),   
\begin{equation}\label{eq:jan02.2016.2}
W_2(\mu, m^{\mu, \gamma})=W_2(\varrho, \eta).
\end{equation}
But  
\begin{equation}\label{eq:jan02.2016.3} 
\int_{TM  \times TM} \Bigl \langle \xi(w) ; z-w \Bigr\rangle G^{\mu, \eta, \gamma} = \int_{M \times M} \gamma(dx, da) \int_{\mathbb R^d} 
\Bigl \langle 
\xi(w) ; \left(
\begin{array}{c}
a-x\\
0\\
\end{array}
\right) \Bigr\rangle \mu_x(dv).
\end{equation} 
We combine (\ref{eq:jan02.2016.1}), (\ref{eq:jan02.2016.2}) and (\ref{eq:jan02.2016.3}) to conclude that 
\[
\phi(\eta)-\phi(\varrho) \leq \int_{M \times M} \langle \bar \xi_1(x); a-x \rangle \gamma(dx, da) + o\Bigl(W_2(\varrho, \eta) \Bigr),
\] 
which proves (i).

(ii) Note that  
\[
||\xi||^2_\mu= \int_{TM} (|\xi_1|^2 + |\xi_2|^2)\mu(dx, dv) \geq \int_{TM} |\xi_1|^2\mu(dx, dv)= \int_M \varrho(dx) \int_{\mathbb R^d} |\xi_1|^2 \mu_x(dv),
\] 
and equality holds if and only if $||\xi_2||_\mu=0.$ Hence, by Jensen's inequality 
 \[
 ||\xi||^2_\mu \geq \int_M \varrho(dx)\Bigl| \int_{\mathbb R^d} \xi_1 \mu_x(dv) \Bigr|^2= ||\xi_1||^2_\varrho.
 \]
 The inequality is strict unless for $\rho$ a.e. $x$ we have $\xi_1(x,v)=\bar \xi_1(x)$ for a.e. $v.$ 

(iii) Follows from (i) and (ii). \endproof

\begin{remark}\label{re:subdifferential2} Let $\mu \in \mathcal P_2(TM)$ and let $\varrho= \pi^1_\# \mu.$ Let $G \in \Gamma_{o}(\mu, \mu^\tau)$ and recall that $G_\mu^{\mu^\tau}$ is its barycentric projection onto $\mu$.  
\begin{enumerate}  
\item[(i)] We have $\Phi_\tau(\mu)=\phi_\tau(\varrho)$ and $\varrho^\tau:= \pi^1_\# \mu^\tau.$   
\item[(ii)] We have $\pi^2\bigl( G_\mu^{\mu^\tau}(x,v) \bigr) \equiv v$ $\mu$ a.e., with $\pi^2$ defined by $\pi^2(x,v)=v$ for $(x,v) \in TM.$ 
\end{enumerate}
\end{remark}  
\proof{} (i) follows from Lemma \ref{le:subdifferential1}. 

Assume $G \in \Gamma_{o}(\mu, \mu^\tau)$ and let $A \in C_c(TM, \mathbb R^d)$ be arbitrary. We exploit Lemma \ref{le:subdifferential1} (iii) which asserts that $G$ is supported by $S$ to obtain

\begin{eqnarray} 
\int_{TM} \langle v-\pi^2\bigl( G_\mu^{\mu^\tau}(x,v) \bigr); A(x,v) \rangle \mu(dx, dv) 
&=& \int_{TM \times TM}  \langle v- b; A(x,v) \rangle G(dx, dv, da, db) \nonumber\\ 
&=& 0.
\label{eq:jan03.2016.2}
\end{eqnarray} \endproof

\begin{prop}\label{pr:diffmoreau-fisher2} Assume that $D(\phi) \subset \mathcal P_2^r(M)$ and that $\phi$ is convex for the $L^1$--metric.  Let $\mu \in \mathcal P_2(TM)$ and $\varrho= \pi^1_\# \mu$ be such that $J_\tau^\phi(\varrho)$ contains a unique element, $\varrho^\tau$, which then belongs to  $\mathcal P_2^r(M)$. Denote by  $\gamma$ the unique element of $\Gamma_{o}(\varrho, \varrho^\tau)$   and let $G \in \Gamma_0(\mu, \mu^\tau).$
 \begin{enumerate} 
\item[(i)]   If $\xi \in \bar{\partial} \phi_\tau(\varrho)$ and $\gamma_{\varrho}^{\varrho^\tau}$ denotes the barycentric projection of $\gamma$ onto $\varrho$, then 
\[ 
\pi_{\varrho}(\xi) = {  {\bf id}-\gamma_{\varrho}^{\varrho^\tau} \over \tau}. 
\]  
\item[(ii)] Further assume that $\varrho \ll \mathcal L^d$ and let $u: M \rightarrow (-\infty, \infty]$ be a lower semicontinuous convex function such that $(\nabla u)_\# \varrho=\varrho^\tau.$ If $X \in \bar{\partial} \Phi_\tau(\mu) $, then  
\[
\pi_{\mu}(X) = {{\bf id}-G_{\mu}^{\mu^\tau} \over \tau}= 
\left(
\begin{array}{c}
 {{\bf id}-\nabla u\over \tau} \\ 
 \\ 
0\\
\end{array}
\right). 
\]  
\item[(iii)] As a consequence, if $\varrho \ll \mathcal L^D$, then 
\[ \nabla_\varrho \phi_\tau(\varrho)={  {\bf id} -\nabla u \over \tau}, \quad \hbox{and} \quad 
\nabla_\mu \Phi_\tau(\mu)=  
\left(
\begin{array}{c}
\nabla_\varrho \phi_\tau(\varrho) \\ \\
0\\
\end{array}
\right). 
\]  
Furthermore, $J^\Phi_\tau(\mu)=\{m^{\mu, \gamma } \}.$
\end{enumerate}  
\end{prop}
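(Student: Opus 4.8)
The plan is to deduce (ii) and (iii) from (i) together with the structural results of Sections~\ref{sec:moreau-app}--\ref{sec:projection}, so (i) is the heart of the matter; the guiding idea is that the Moreau--Yosida regularizations $\phi_\tau$ and $\Phi_\tau$ are \emph{differentiable} at $\varrho$ and $\mu$ respectively, so that their superdifferentials there collapse, modulo divergence-free fields, to a single vector field which one then identifies explicitly. First the reductions: by Remark~\ref{re:subdifferential2}(i) we have $\Phi_\tau(\mu)=\phi_\tau(\varrho)$ and $\pi^1_\#\mu^\tau=\varrho^\tau$; when $\varrho\ll\mathcal L^d$ (the setting of (ii)--(iii)) Brenier's theorem gives $\gamma=(\mathbf{id}\times\nabla u)_\#\varrho$, hence $\gamma_\varrho^{\varrho^\tau}=\nabla u$, and Lemma~\ref{le:bound-on-W2}(iv) forces $\mu^\tau=m^{\mu,\gamma}$ with $\Gamma_{o}(\mu,\mu^\tau)=\{G^{\mu,\gamma}\}$, which is already the last assertion $J_\tau^\Phi(\mu)=\{m^{\mu,\gamma}\}$ of (iii). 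Moreover, (\ref{eq:jan03.2016.1}) shows $G=G^{\mu,\gamma}=(\mathbf{id}_{TM}\times T)_\#\mu$ with $T(x,v)=(\nabla u(x),v)$, so $G_\mu^{\mu^\tau}=T$ and $\tau^{-1}(\mathbf{id}-G_\mu^{\mu^\tau})=\bigl(\tau^{-1}(\mathbf{id}-\nabla u),0\bigr)$, which is the $(x,v)$-gradient of $\zeta(x,v):=\tau^{-1}(|x|^2/2-u(x))$ and hence lies in $T_\mu\mathcal P_2(TM)$.

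For (i): since $\varrho^\tau$ realizes the infimum defining $\phi_\tau(\varrho)$, one has $\phi_\tau\le\tfrac1{2\tau}W_2^2(\cdot,\varrho^\tau)+\phi(\varrho^\tau)$ with equality at $\varrho$, so every supergradient at $\varrho$ of the right-hand side is one of $\phi_\tau$; as $\varrho^\tau\in\mathcal P_2^r(M)$ and $\Gamma_{o}(\varrho,\varrho^\tau)=\{\gamma\}$, Remark~\ref{re:differentiability1}(iii) gives $\bar\partial\bigl[\tfrac1{2\tau}W_2^2(\cdot,\varrho^\tau)\bigr](\varrho)=\tau^{-1}(\mathbf{id}-\gamma_\varrho^{\varrho^\tau})+\{v\in L^2(\varrho):\mathrm{div}_\varrho v=0\}$, whence $\tau^{-1}(\mathbf{id}-\gamma_\varrho^{\varrho^\tau})\in\bar\partial\phi_\tau(\varrho)\cap T_\varrho\mathcal P_2(M)$ and in particular $\bar\partial\phi_\tau(\varrho)\neq\emptyset$. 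On the other hand $\phi_\tau$ is locally Lipschitz on $W_2$-bounded sets (the $\phi$-version of Remark~\ref{re:moreau-sub}(iii)), so its metric slope at $\varrho$ is finite, and the Moreau--Yosida theory (cf.\ \cite{ambrosio2008gradient}, together with the convexity hypothesis on $\phi$, which passes to $\phi_\tau$) yields $\tau^{-1}(\mathbf{id}-\gamma_\varrho^{\varrho^\tau})\in\ubar{\partial}\phi_\tau(\varrho)$, so $\ubar{\partial}\phi_\tau(\varrho)\neq\emptyset$. By Definition~\ref{de:differentiability1}(iii), $\phi_\tau$ is differentiable at $\varrho$ and $\bar\partial\phi_\tau(\varrho)\cap T_\varrho\mathcal P_2(M)=\{\nabla_\varrho\phi_\tau(\varrho)\}=\{\tau^{-1}(\mathbf{id}-\gamma_\varrho^{\varrho^\tau})\}$; for $\xi\in\bar\partial\phi_\tau(\varrho)$ Remark~\ref{re:differentiability1}(ii) gives $\pi_\varrho(\xi)\in\bar\partial\phi_\tau(\varrho)\cap T_\varrho\mathcal P_2(M)$, which is (i).

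For (ii) and (iii), assume $\varrho\ll\mathcal L^d$. By the reductions, $\tau^{-1}(\mathbf{id}-G_\mu^{\mu^\tau})=(\tau^{-1}(\mathbf{id}-\nabla u),0)\in\bar\partial\Phi_\tau(\mu)$ (Lemma~\ref{le:moreau-convex}(iv)) and it lies in $T_\mu\mathcal P_2(TM)$. One now repeats the scheme of (i) for $\Phi_\tau$ at $\mu$: $\bar\partial\Phi_\tau(\mu)\neq\emptyset$ is in hand, and to get $\ubar{\partial}\Phi_\tau(\mu)\neq\emptyset$ one transports the subgradient inequality for $\phi_\tau$ at $\varrho$ along $\Phi_\tau=\phi_\tau\circ\pi^1_\#$: for $m\in\mathcal P_2(TM)$ and $\bar G\in\Gamma_{o}(\mu,m)$ the pairing $\int_{TM\times TM}\langle(\tau^{-1}(\mathbf{id}-\nabla u),0),z-w\rangle\,\bar G$ equals $\int_{TM}\zeta\,dm-\int_{TM}\zeta\,d\mu$ up to a remainder, and likewise for $\int\langle\tau^{-1}(\mathbf{id}-\nabla u),b-a\rangle\gamma'$ with $\gamma'\in\Gamma_{o}(\varrho,\pi^1_\#m)$, so using $W_2(\pi^1_\#m,\varrho)\le W_2(m,\mu)$ and Lemma~\ref{le:subdifferential1} one obtains $(\tau^{-1}(\mathbf{id}-\nabla u),0)\in\ubar{\partial}\Phi_\tau(\mu)$. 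Hence $\Phi_\tau$ is differentiable at $\mu$, $\bar\partial\Phi_\tau(\mu)\cap T_\mu\mathcal P_2(TM)=\{\nabla_\mu\Phi_\tau(\mu)\}=\{(\tau^{-1}(\mathbf{id}-\nabla u),0)\}$, and Remark~\ref{re:differentiability1}(ii) gives $\pi_\mu(X)=(\tau^{-1}(\mathbf{id}-\nabla u),0)$ for every $X\in\bar\partial\Phi_\tau(\mu)$, which is (ii). Finally (iii) is read off: $\nabla_\varrho\phi_\tau(\varrho)=\tau^{-1}(\mathbf{id}-\nabla u)$ by (i), $\nabla_\mu\Phi_\tau(\mu)=(\nabla_\varrho\phi_\tau(\varrho),0)$ by (ii) (minimal-norm elements of the respective superdifferentials), and $J_\tau^\Phi(\mu)=\{m^{\mu,\gamma}\}$ was shown in the reductions.

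The main obstacle is the differentiability of the Moreau--Yosida approximations at the prescribed points, i.e.\ that the superdifferentials are no larger than a single field modulo divergence-free perturbations; equivalently, that the subdifferentials are non-empty. For $\phi_\tau$ this is the standard Moreau--Yosida fact, but applying it correctly requires matching the convexity hypothesis on $\phi$ (only $L^1$-convexity is assumed) with the form of convexity under which the resolvent inherits convexity. For $\Phi_\tau$ there is the extra difficulty that $\Phi=\phi\circ\pi^1_\#$ need not be displacement convex, since $\pi^1_\#$ does not map geodesics to geodesics, so one must exhibit an element of $\ubar{\partial}\Phi_\tau(\mu)$ by hand; there the non-smoothness of the Brenier potential $u$ — so that $\tau^{-1}(\mathbf{id}-\nabla u)$ is only an $L^2(\varrho)$-gradient and the remainder above is controlled only on one side by the semiconcavity modulus $1/\tau$ — forces either an approximation of $u$ or the careful barycentric-projection and $S$-support bookkeeping of Section~\ref{sec:projection} in place of a naive first-order expansion.
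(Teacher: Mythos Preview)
Your approach is genuinely different from the paper's, and the gap you yourself flag as ``the main obstacle'' is exactly where it fails to close.

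Your strategy for (i) is: exhibit an element in both $\bar\partial\phi_\tau(\varrho)$ and $\ubar\partial\phi_\tau(\varrho)$, then invoke Definition~\ref{de:differentiability1}(iii) to force the tangent projection of every supergradient to a single vector. The supergradient side is fine: $\phi_\tau\le\tfrac1{2\tau}W_2^2(\cdot,\varrho^\tau)+\phi(\varrho^\tau)$ with equality at $\varrho$, together with Remark~\ref{re:differentiability1}(iii), does give $\tau^{-1}(\mathbf{id}-\gamma_\varrho^{\varrho^\tau})\in\bar\partial\phi_\tau(\varrho)$. But the subgradient step is not justified. The only convexity assumed on $\phi$ is $L^1$-convexity; Lemma~\ref{le:moreau-convex}(i) yields semiconcavity of $\phi_\tau$ along Wasserstein geodesics, which is the wrong sign for producing subgradients. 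There is no ``standard Moreau--Yosida fact'' in the Wasserstein setting that converts $L^1$-convexity of $\phi$ into geodesic convexity of $\phi_\tau$ or into $\ubar\partial\phi_\tau(\varrho)\neq\emptyset$. The same hole recurs in your treatment of (ii), where the sketch of transporting a subgradient inequality through $\pi^1_\#$ is too vague to be checked and faces the one-sided-control issue you note at the end.

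The paper sidesteps differentiability entirely. For (i) it fixes an arbitrary $\xi\in\bar\partial\phi_\tau(\varrho)$, perturbs $\varrho$ to $\varrho_s=(\mathbf{id}+s\nabla U)_\#\varrho$ for $U\in C_c^\infty(M)$, and sandwiches $\phi_\tau(\varrho_s)-\phi_\tau(\varrho)$ between the lower bound $\tfrac1{2\tau}\bigl(W_2^2(\varrho_s,\varrho_s^\tau)-W_2^2(\varrho,\varrho_s^\tau)\bigr)$ (from minimality of $\varrho_s^\tau$) and the upper bound from the supergradient inequality for $\xi$. Expanding the lower bound to first order in $s$ and passing to the limit $s\to0$ via the stability result Lemma~\ref{le:moreau-fisher2} (this is where uniqueness of $\varrho^\tau$ enters) gives
\[
\int_M\Bigl\langle\frac{y-\gamma_\varrho^{\varrho^\tau}(y)}{\tau};\nabla U(y)\Bigr\rangle\,\varrho(dy)\;\le\;\int_M\langle\xi(x);\nabla U(x)\rangle\,\varrho(dx);
\]
replacing $U$ by $-U$ upgrades this to an equality for all test gradients, which is precisely $\pi_\varrho(\xi)=\tau^{-1}(\mathbf{id}-\gamma_\varrho^{\varrho^\tau})$. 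Part~(ii) is then the same computation on $TM$, once Lemma~\ref{le:bound-on-W2}(iv) supplies the uniqueness needed to run Lemma~\ref{le:moreau-fisher2}; your identification $G_\mu^{\mu^\tau}=(\nabla u,\mathbf{id})$ via (\ref{eq:jan03.2016.1}) and Remark~\ref{re:subdifferential2} is correct and matches how the paper reads off the explicit column vector. Your reductions for (iii), including $J_\tau^\Phi(\mu)=\{m^{\mu,\gamma}\}$ from Lemma~\ref{le:bound-on-W2}(iv), agree with the paper.
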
 
\proof{} (i) Applying Lemma \ref{le:moreau-convex} to $\phi_\tau$,  we have $\bar{\partial} \phi_\tau(\varrho) \not =\emptyset.$ For $U \in C^\infty_c(M)$ and for $s \in \mathbb R,$  we define  
\[
{\bf g}_s:={\bf id} +s \nabla U, \quad \hbox{and} \quad \varrho_s:= {\bf g}_{s \, \#} \varrho. 
\]
Observe that for $|s|$ small enough, ${\bf g}_s$ is the gradient of a convex function and therefore, it is optimal among the maps that push $\varrho$ forward to $\varrho_s$, where optimality is measured against the cost $c(x,a)=|x-a|^2$ where $x, a \in M.$ Hence, 
\[
\beta_s:=({\bf id} \times {\bf g}_s)_\# \varrho \in \Gamma_{o}(\varrho, \varrho_s).
\]
By the fact that $\varrho_s^\tau \in J_\tau^\phi(\varrho_s)$    we have  
\begin{equation}\label{eq:dec.30.2015.17late}
\phi_\tau(\varrho_s) -\phi_\tau(\varrho) \geq {1 \over 2 \tau}\Bigl(W_2^2(\varrho_s, \varrho_s^\tau)- W_2^2(\varrho, \varrho_s^\tau)  \Bigr).
\end{equation}
By the fact that $\xi \in \bar{\partial} \phi_\tau(\varrho)$,  there exists  a function $\bar \epsilon: \mathbb R \rightarrow \mathbb R$ such that $\lim_{t \rightarrow 0} \bar \epsilon(t)=0$ and 
\[
\phi_\tau(\varrho_s) -\phi_\tau (\varrho) \leq W_2(\varrho, \varrho_s) \bar \epsilon \bigl(W_2(\varrho, \varrho_s) \bigr)+ \int_{M \times M} \langle  \xi(x);a-x \rangle \beta_s(dx, da). 
\]
This, together with (\ref{eq:dec.30.2015.17late}), imply 

\begin{equation}\label{eq:dec.30.2015.17} 
{1 \over 2 \tau}\Bigl(W_2^2(\varrho_s, \varrho_s^\tau)- W_2^2(\varrho, \varrho_s^\tau)  \Bigr) \leq 
W_2(\varrho, \varrho_s) \bar \epsilon \bigl(W_2(\varrho, \varrho_s) \bigr)+ \int_{M \times M} \langle  \xi(x);a-x \rangle \beta_s(dx, da).
\end{equation} 
Let $\gamma_s \in \Gamma_{o}(\varrho_s, \varrho_s^\tau)$ and define on $M \times M$ the Borel probability measure $\bar \gamma_s$  by 
\[
\int_{M \times M} l(x,a) \bar \gamma_s(dx, da)= \int_{M \times M} F({\bf g}_s^{-1}(a),y)  \gamma_s(da, dy) \qquad \forall \; l \in C_b(M \times M).
\] 
We have
\[
{\bf g}_s^{-1}(a)=a-s \nabla U(a) +{s^2 \over 2} \nabla^2 U(a) \nabla U(a)+ o(s^2)
\]
and $\bar \gamma_s \in \Gamma (\varrho, \varrho_s^\tau)$. Thus, 
\begin{eqnarray}
W_2^2(\varrho_s, \varrho_s^\tau)- W_2^2(\varrho, \varrho_s^\tau) & \geq & 
\int_{M \times M} |a-y|^2 \gamma_s(dy, da)- \int_{M \times M} |a-x|^2 \bar \gamma_s(dx, da)  \nonumber\\
&= & 
\int_{M \times M} \bigl(|a-y|^2- |a- {\bf g}_s^{-1}(y)|^2\bigr) \gamma_s(dy, da) \nonumber\\
&= & 2s \int_{M \times M} \langle y-a ; \nabla U(y)\rangle \gamma_s(dx, da) + o(s). 
\label{eq:dec.30.2015.18}
\end{eqnarray} 
Recall that  for $|s|$ small enough, $\beta_s \in \Gamma_o(\varrho, \varrho_s)$ and hence, 
\begin{equation}\label{eq:dec.30.2015.19} 
W_2^2(\varrho, \varrho_s) = \int_{M \times M} |x-y|^2 \beta_s(dx, dy)= || s \nabla U||_{\varrho}^2.
\end{equation} 
We combine (\ref{eq:dec.30.2015.17}), (\ref{eq:dec.30.2015.18}) and (\ref{eq:dec.30.2015.19}) to obtain  
\[
 {o(s) \over s} +\int_{M \times M} \Bigl \langle {y-a \over \tau} ; \nabla U(y)  \Bigr\rangle  \gamma_s(da, dy)
\leq  ||\nabla U||_{\mu_0} \bar \epsilon \bigl( ||s \nabla U||_{\mu_0} \bigr)+ 
  \int_{M } \langle  \xi(x);\nabla U(x) \rangle \varrho(dx).
\] 
Letting $s \rightarrow 0$ we conclude that 
\begin{equation}\label{eq:dec.30.2015.20}  
\liminf_{s \rightarrow 0^+}
\int_{M \times M} \Bigl \langle {y-a \over \tau} ; \nabla U(y) \Bigr\rangle  \gamma_s(da, dy) \leq 
  \int_{M } \langle  \xi(x);\nabla U(x) \rangle \varrho(dx).
\end{equation}   
Observe that 
\[
\sup_{|s| \leq 1}W_2^2(\varrho_s, \delta_0)\leq \sup_{|s| \leq 1} \int_{M} |x+s \nabla U(x)|^2 \varrho(dx)<\infty.
\] 
This, together with Lemma \ref{le:moreau-convex} (ii), imply 
\[
\sup_{|s| \leq 1}W_2^2(\varrho_s^\tau, \delta_0)<\infty.
\] 
Thus, 
\begin{equation}\label{eq:dec.30.2015.20-new}
\sup_{|s| \leq 1}W_2^2(\gamma_s, \delta_{(0,0)})<\infty. 
\end{equation}  
By Lemma \ref{le:moreau-fisher2}, as $s$ tends to $0$, $(\gamma_s)_s$ converges narrowly to the unique element $\gamma \in \Gamma_{o}(\varrho, \varrho^\tau).$ Since (\ref{eq:dec.30.2015.20-new}) holds and $|{a-x \over \tau} ; \nabla U(x)|$ grows at most linearly as $|x|$ and $|a|$ tend to $\infty,$ we conclude that  
\[
\liminf_{s \rightarrow 0}
\int_{M \times M} \Bigl\langle {y-a \over \tau} ; \nabla U(y) \Bigr \rangle \gamma_s(da, dy)= 
\int_{M \times M} \Bigl\langle {y-a \over \tau} ; \nabla U(x) \Bigr \rangle \gamma(da, dy).
\]
This, together with (\ref{eq:dec.30.2015.20}), yield 
\[
\int_{M \times M} \Bigl\langle {y-a \over \tau} ; \nabla U(y) \Bigr \rangle \gamma(da, dy) \leq \int_{M } \langle  \xi(x);\nabla U(x) \rangle \varrho(dx).
\] 
Replacing $U$ by $-U$ we conclude that 
\[
 \int_{M } \langle  \xi(x);\nabla U(x) \rangle \varrho(dx) =\int_{M \times M} \Bigl\langle {y-a \over \tau} ; \nabla U(y) \Bigr \rangle \gamma(da, dy) = \int_{M} \Bigl\langle { y-\gamma_{\varrho}^{\varrho^\tau}(y)\over \tau} ; \nabla U(y) \Bigr \rangle \varrho(dy).
\]  
As a consequence, 
\[ 
\pi_{\varrho}( \xi) =\pi_{\varrho}\biggl(    {{\bf id}-\gamma_{\varrho}^{\varrho^\tau}  \over \tau} \biggr)={{\bf id}-\gamma_{\varrho}^{\varrho^\tau}  \over \tau}, 
\] 
since by  Theorems 8.5.5 and 12.4.4 \cite{ambrosio2008gradient}, we know that $\gamma_{\varrho}^{\varrho^\tau} -{\bf id}  \in T_{\varrho} \mathcal P_2(M).$ 

(ii) Further assume that $\varrho\ll\mathcal L^d.$ Then as observed in Remark \ref{re:moreau-fisher}, $J^\phi_\tau=\{\varrho^\tau\}$ reduces to a single point such that $\varrho^\tau\ll\mathcal L^d.$ Thus, $\Gamma_{o}(\varrho, \varrho^\tau)=\{\gamma\}$ also reduces to a single point and $\gamma= (\id \times \nabla u)_\# \varrho$ for a lower semicontinuous convex function, $u: M \rightarrow (-\infty, \infty].$ By Lemma \ref{le:bound-on-W2}  
\[
J^\Phi_\tau(\mu)= \{m^{\mu, \gamma}\}.
\] 
That uniqueness result is all we need to repeat the same arguments as in (i) to conclude the first identity in (ii). Remark \ref{re:subdifferential2} asserts that $\pi^2\bigl( G_\mu^{\mu^\tau}(x,v) \bigr) \equiv v$ while by Lemma  \ref{le:bound-on-W2} 
\[
\Gamma_{o}(\mu, \bar m)=\{G^{\mu, \gamma}\}.
\] 
Thus, if $A \in C_c(TM)$ is arbitrary, denoting by $G_{\mu}^{\mu^\tau}$ the barycentric projection of $G^{\mu, \gamma}$ onto $\mu$, we have 
\[
\int_{TM} \langle A(x,v) ; \pi^1(G_{\mu}^{\mu^\tau}(x,v))\rangle \mu(dx, dv)  = 
 \int_{TM \times TM} \langle A(x,v) ; a \rangle G^{\mu, \gamma}(dx, dv, da, dv). 
\]
Using the fact that $\gamma= (\id \times \nabla u)_\# \varrho,$ we conclude that 
\begin{eqnarray}
\int_{TM} \langle A(x,v) ; \pi^1(G_{\mu}^{\mu^\tau}(x,v))\rangle \mu(dx, dv) &= & 
 \int_{TM \times TM} \langle A(x,v) ; \nabla u(x) \rangle G^{\mu, \gamma}(dx, dv, da, dv) \nonumber\\
&= & 
 \int_{TM} \langle A(x,v) ; \nabla u(x) \rangle \mu(dx, dv).  
\label{eq:jan04.2015.4}
\end{eqnarray} 
Therefore, 
\[
 \pi^1(G_{\mu}^{\mu^\tau}(x,v))=\nabla u(x)=\gamma_{\varrho}^{\varrho^\tau} \qquad \mu \; \hbox{a.e.}.
\]

In light of (i), $ ({\bf id}-\gamma_{\varrho}^{\varrho^\tau})/ \tau$ is the element of minimal norm in $\bar{\partial} \phi_\tau(\varrho)$; hence, the first identity in (iii) holds. Similarly, we use (ii) to obtain the second identity in (iii). Since $J^\phi_\tau(\varrho)$ contains only $\varrho^\tau,$ we use Lemma \ref{le:bound-on-W2} (iv) to conclude that $J^\Phi_\tau(\varrho)$ contains only $m^{\mu,  \gamma }$. \endproof

%
%
%
\section{Solutions to an approximate Hamiltonian systems in the periodic setting}\label{sec:existence-approximate}
To avoid  technical issues, in this section,  we shall study an approximative version of the kinetic Bohmian equation (\ref{eq:Vl_eqn}) on $\mathbb T^d \times \mathbb R^d$ instead of  $\mathbb R^d \times \mathbb R^d.$ In the sequel, we set 
\[M:=\mathbb T^D, \] 
and  fix a function  $V \in C^2(M).$  The function $\mathcal F,$ defined in (\ref{eq:Fisher}) (or equivalently in (\ref{eq:fisher3})) as $1/8$ times the Fisher information, will be used in this section. For $\mu \in \mathcal P_2(TM)$, we define the function 
\[
\mathcal H(\mu) = M^1_2(\mu) + \Phi(\mu)+ \mathcal V(\mu)
\]
where
\[
\mathcal V(\mu) \equiv \mathcal V(\pi^1_\# \mu):= \int_{TM} V(x) \mu(dx, dv), \quad \Phi(\mu):= \phi(\pi^1_\# \mu), \quad  M^1_2(\mu)= \int_{TM}{|v|^2 \over 2} \mu(dx, dv),
\]
and 
\[
\phi:=\mathcal F.
\]
Fix $\tau>0$ and recall that if $\varrho \ll\mathcal L^d$ we denote by $\varrho^\tau$ the unique measure satisfying 
\[
\phi_\tau(\varrho)=\phi(\varrho^\tau)+{W_2^2(\varrho, \varrho^\tau)\over 2 \tau}.
\] 
Similarly, Lemma \ref{le:bound-on-W2} ensures that there is a unique $\mu^\tau \in \mathcal P_2(TM)$ such that 
\[
\Phi_\tau(\mu)=\Phi(\mu^\tau)+{W_2^2(\mu, \mu^\tau)\over 2 \tau}.
\] 
We set 
\[
\mathcal H_\tau(\mu) = M^1_2(\mu) + \Phi_\tau(\mu)+ \mathcal V(\mu).
\]

\begin{lemma}\label{le:oct29.2016.1} Let $\mu \in \mathcal P_2^r(TM)$ and assume that $\varrho:= \pi^1_\# \mu \ll\mathcal L^D.$ Then, 
\begin{equation}\label{eq:oct28.2016.1} 
\nabla_\mu \mathcal H_\tau(\mu)(x,v) = 
\left(
\begin{array}{c}
\nabla V(x) + { {\bf t}_\varrho^{\varrho^\tau}(x) -x \over \tau}   \\ 
\\ 
v\\
\end{array}
\right)=: {\bf H}(x,v),
\end{equation} 
where ${\bf t}_\varrho^{\varrho^\tau}$ is the optimal map that pushes $\varrho$ forward to $\varrho^\tau$
\end{lemma}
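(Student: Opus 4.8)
The plan is to use the additive splitting $\mathcal H_\tau = M^1_2 + \mathcal V + \Phi_\tau$, to compute the Wasserstein gradient of each of the three summands at $\mu$, and then to verify that the gradient of the sum is the sum of the gradients. Throughout one uses that $\varrho := \pi^1_\# \mu \ll \mathcal L^D$, so that the proximal measure $\varrho^\tau$ and the optimal map ${\bf t}_\varrho^{\varrho^\tau}$ pushing $\varrho$ onto $\varrho^\tau$ are uniquely determined.

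First I would dispatch the two ``linear'' terms together. Since
\[
M^1_2(\mu) + \mathcal V(\mu) = \int_{TM}\Bigl(V(x) + \tfrac12|v|^2\Bigr)\,\mu(dx,dv)
\]
and $(x,v)\mapsto V(x)+\tfrac12|v|^2$ is of class $C^2$ on $TM$ (recall $V\in C^2(M)$), expanding this integrand to first order along an optimal plan $\gamma\in\Gamma_o(\mu,\nu)$ and appealing to Definition \ref{de:differentiability1} shows that $M^1_2+\mathcal V$ is differentiable at $\mu$, with Wasserstein gradient the field $(x,v)\mapsto(\nabla V(x),v)^{\top}$; being the gradient of a $C^1$ function, this field already lies in $T_\mu\mathcal P_2(TM)$. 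The only delicate point is controlling the remainder $o(W_2(\mu,\nu))$ coming from $\tfrac12|v|^2$, which is not globally Lipschitz; finiteness of the second moments makes this harmless.

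Next I would treat $\Phi_\tau$ by invoking Proposition \ref{pr:diffmoreau-fisher2} with $\phi=\mathcal F$. Its hypotheses hold: by the definitions (\ref{eq:Fisher})--(\ref{eq:fisher3}) one has $D(\mathcal F)\subset\mathcal P_2^r(M)$, and $\mathcal F$ is convex along $L^1$-geodesics because the integrand $L$ in (\ref{eq:def_L}) is convex. As $\varrho\ll\mathcal L^D$, Remark \ref{re:moreau-fisher} yields $J_\tau^\phi(\varrho)=\{\varrho^\tau\}$ with $\varrho^\tau\in\mathcal P_2^r(M)$ and $\Gamma_o(\varrho,\varrho^\tau)=\{(\id\times{\bf t}_\varrho^{\varrho^\tau})_\#\varrho\}$. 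Proposition \ref{pr:diffmoreau-fisher2}(iii) then gives $\bar\partial\Phi_\tau(\mu)\neq\emptyset$ and
\[
\nabla_\mu\Phi_\tau(\mu)(x,v)=\left(\begin{array}{c}\nabla_\varrho\phi_\tau(\varrho)(x)\\ \\ 0\end{array}\right),\qquad \nabla_\varrho\phi_\tau(\varrho)=\frac{{\bf t}_\varrho^{\varrho^\tau}-\id}{\tau}\in T_\varrho\mathcal P_2(M),
\]
and the first component, regarded as a field on $TM$ independent of $v$, remains inside $T_\mu\mathcal P_2(TM)$.

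It remains to assemble. From the defining inequalities of the sub-/superdifferential one obtains $\bar\partial(M^1_2+\mathcal V)(\mu)+\bar\partial\Phi_\tau(\mu)\subset\bar\partial\mathcal H_\tau(\mu)$, so the field ${\bf H}$ of (\ref{eq:oct28.2016.1}) — the sum of the two gradients computed above — belongs to $\bar\partial\mathcal H_\tau(\mu)$, and ${\bf H}$, being a sum of elements of $T_\mu\mathcal P_2(TM)$, is itself tangent. By Remark \ref{re:differentiability1}(ii) the element of minimal norm of a nonempty superdifferential is its orthogonal projection onto the tangent space, which fixes tangent vectors; hence ${\bf H}=\nabla_\mu\mathcal H_\tau(\mu)$, which is (\ref{eq:oct28.2016.1}). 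I expect this last, additivity, step to be the main obstacle: the Wasserstein gradient of a sum is not automatically the sum of the gradients, and the argument must combine two facts — that the superdifferential of a sum contains the sum of the superdifferentials, and that each of the three gradient fields lies in $T_\mu\mathcal P_2(TM)$, so the divergence-free ambiguity washes out when one passes to the minimal-norm representative. Everything else is either a direct citation of Proposition \ref{pr:diffmoreau-fisher2} or a routine first-order expansion.
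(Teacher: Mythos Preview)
Your overall strategy --- split $\mathcal H_\tau=M^1_2+\mathcal V+\Phi_\tau$, compute the Wasserstein gradients of the three summands separately (invoking Proposition~\ref{pr:diffmoreau-fisher2} for $\Phi_\tau$), then add --- is the same as the paper's. The one genuine gap is in your final assembly step. You exhibit ${\bf H}\in\bar\partial\mathcal H_\tau(\mu)$ and observe ${\bf H}\in T_\mu\mathcal P_2(TM)$, and then invoke Remark~\ref{re:differentiability1}(ii) to conclude ${\bf H}=\nabla_\mu\mathcal H_\tau(\mu)$. But that remark only says that $\pi_\mu$ maps $\bar\partial\mathcal H_\tau(\mu)$ into itself; it does \emph{not} say that $\bar\partial\mathcal H_\tau(\mu)\cap T_\mu\mathcal P_2(TM)$ is a singleton. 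In principle the superdifferential could be a union of several cosets of the divergence-free fields, in which case there would be several tangent elements and only the one of smallest norm would be the gradient. Knowing merely that ${\bf H}$ is tangent and belongs to $\bar\partial\mathcal H_\tau(\mu)$ does not, on its own, identify ${\bf H}$ as that minimal-norm element.

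The paper closes this by running the inclusion in the opposite direction. Since $M^1_2$ and $\mathcal V$ are differentiable (their gradients lie in both $\ubar\partial$ and $\bar\partial$), for \emph{any} $Z\in\bar\partial\mathcal H_\tau(\mu)$ one has $Z-\nabla_\mu M^1_2(\mu)-\nabla_\mu\mathcal V(\mu)\in\bar\partial\Phi_\tau(\mu)$. Now Proposition~\ref{pr:diffmoreau-fisher2}(ii) is invoked in its full strength: it identifies $\pi_\mu(X)$ for \emph{every} $X\in\bar\partial\Phi_\tau(\mu)$, not just for one particular element. This forces $\pi_\mu(Z)={\bf H}$ for every $Z$ in the superdifferential of $\mathcal H_\tau$; taking $Z=\nabla_\mu\mathcal H_\tau(\mu)$, which is tangent and hence fixed by $\pi_\mu$, finishes. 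Your addition-direction argument does supply the nonemptiness of $\bar\partial\mathcal H_\tau(\mu)$ that the paper takes for granted, so the two directions are complementary; but the uniqueness of the tangent representative really comes from the characterization in Proposition~\ref{pr:diffmoreau-fisher2}(ii), not from Remark~\ref{re:differentiability1}(ii).
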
 

\proof{} 
By Proposition \ref{pr:diffmoreau-fisher2} ,  
\[ 
\nabla_\mu \Phi_\tau(\mu)(x,v) = 
\left(
\begin{array}{c}
 { {\bf t}_\varrho^{\varrho^\tau}(x) -x \over \tau}   \\  
0\\
\end{array}
\right).
\] 
Since 
\[
\nabla_\mu M^1_2(\mu) \equiv \left(
\begin{array}{c}
0 \\ 
v\\
\end{array}
\right),  \quad  
\nabla_\mu \mathcal V(\mu) \equiv \left(
\begin{array}{c}
\nabla V(x) \\ 
0\\
\end{array}
\right) \quad \forall \; (x,v) \in TM,
\] 
and 

\[
\nabla_\mu M^1_2(\mu)  \in \bar \partial M^1_2(\mu) \cap \ubar \partial M^1_2(\mu) \quad \hbox{and} \quad  
\nabla_\mu \mathcal V(\mu)  \in \bar \partial \mathcal V(\mu) \cap \ubar \partial \mathcal V(\mu)
\] 
we conclude that if $Z \in \bar \partial \mathcal H_\tau(\mu)$, then 
\[ 
Z-\nabla_\mu M^1_2(\mu)-\nabla_\mu \mathcal V \in \bar \partial \Phi_\tau(\mu). 
\] 
Furthermore, by Proposition \ref{pr:diffmoreau-fisher2}  
\[
 \nabla_\mu \Phi_\tau(\mu)=\pi_\mu(Z-\nabla_\mu M^1_2(\mu)-\nabla_\mu \mathcal V)= \pi_\mu(Z)-\nabla_\mu M^1_2(\mu)-\nabla_\mu \mathcal V.
\] 
In particular, setting $Z:= \nabla_\mu \mathcal H_\tau(\mu)$, we conclude the proof. \endproof

\begin{theorem}\label{th:existence1} Let $\mu_0= f_0 \mathcal L^{2D} \in \mathcal P_2^r(TM)$ and let $\tau>0.$ 
\begin{enumerate} 
\item[(i)]   There exists a path $t \rightarrow \bar \mu_t^\tau$ such that for each $T>0$ we have $\bar \mu^\tau \in AC_2\bigl(0,T; \mathcal P_2(TM)\bigr)$ and 
\[
\partial_t \bar \mu^\tau+ \nabla \cdot \Bigl( \bar \mu^\tau J \nabla_\mu \mathcal H_\tau(\bar \mu^\tau) \Bigr)=0 \qquad \mathcal D'\bigl((0,T) \times TM) \bigr).
\] 
\item[(ii)] We have $\bar \mu_t^\tau \ll\mathcal L^{2D}$ for all $t>0.$   
\item[(iii)] Given $r \rightarrow M_r \in (0,\infty)$ there exists $r \rightarrow L_r \in (0,\infty)$ such that 
\[ f_0 \leq M_r \; \hbox{on} \; B_r(0) \quad \implies {d\bar \mu_t^\tau \over d\mathcal L^{2D} } \leq L_r \; \hbox{on} \; B_r(0) \] 
\item[(iv)] Given $r \rightarrow m_r \in (0,\infty)$ there exists $r \rightarrow l_r \in (0,\infty)$ (depending on $\tau$) such that 
\[ f_0 \geq m_r  \; \hbox{on} \; B_r(0)  \quad \implies {d\bar \mu_t^\tau \over d\mathcal L^{2D} } \geq l_r.  \; \hbox{on} \; B_r(0) \]  
\item[(v)] We have $\mathcal H_\tau(\bar \mu_t^\tau)=\mathcal H_\tau(\mu_0).$  
\end{enumerate} 
\end{theorem}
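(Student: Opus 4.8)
\emph{Scheme.} The plan is to obtain $\bar\mu^\tau$ as the uniform limit of a time-splitting scheme alternating the two explicitly solvable halves of the Hamiltonian vector field. By Lemma~\ref{le:oct29.2016.1}, whenever $\varrho:=\pi^1_\#\mu\ll\mathcal L^D$ one has $J\nabla_\mu\mathcal H_\tau(\mu)(x,v)=(v,-g_\varrho(x))$ with $g_\varrho(x):=\nabla V(x)+\tau^{-1}({\bf t}_\varrho^{\varrho^\tau}(x)-x)$. Fix $T>0$ and $h=T/N$; on each slab $[kh,(k+1)h]$ freeze the first marginal at $\varrho_{kh}^h:=\pi^1_\#\mu_{kh}^h$, transport $\mu^h$ at doubled speed by the free field $(v,0)$ over the first half of the slab (the torus map $(x,v)\mapsto(x+2s\,v,v)$), and then kick at doubled speed by $(0,-g_{\varrho_{kh}^h})$ over the second half (the map $(x,v)\mapsto(x,v-2s\,g_{\varrho_{kh}^h}(x))$). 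The crucial point is that neither step integrates an ODE with a non-Lipschitz right-hand side: each is an explicit bi-measurable shear, so $t\mapsto\mu_t^h$ is a well-defined curve in $AC_2(0,T;\mathcal P_2(TM))$ solving a continuity equation piecewise, and it preserves absolute continuity — the free step is a torus diffeomorphism of unit Jacobian, and the kick pushes $f\mathcal L^{2D}$ to the measure of density $(x,v)\mapsto f(x,v+2s\,g_{\varrho_{kh}^h}(x))$.

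\emph{Estimates and compactness.} Since $M$ is a compact torus and $V\in C^2(M)$, $\mathcal V$ is bounded and $\|\nabla V\|_\infty<\infty$; Lemma~\ref{le:moreau-convex}(ii) gives $0\le\Phi_\tau(\mu)\le\tau^{-1}M_2(\mu)+C_\tau$, and Remark~\ref{re:moreau-sub}(ii) gives $\tau^{-1}W_2(\varrho,\varrho^\tau)\le\sqrt{\Phi_\tau(\varrho)/\tau}$, so $\|g_{\varrho_{kh}^h}\|_{\varrho_{kh}^h}\le\|\nabla V\|_\infty+\tau^{-1}W_2(\varrho_{kh}^h,(\varrho_{kh}^h)^\tau)$. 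On the free half only $\mathcal V+\Phi_\tau$ varies and on the kick half only $M_2^1$ varies, each at a rate controlled by $\sqrt{M_2^1}$ and $\|g_{\varrho_{kh}^h}\|_{\varrho_{kh}^h}$; a Gr\"onwall argument on the resulting slowly-varying discrete energy yields $\sup_h\sup_{t\le T}\bigl(M_2^1(\mu_t^h)+\Phi_\tau(\mu_t^h)\bigr)<\infty$, hence $h$-uniform bounds on $M_2(\mu_t^h)$ and on the $L^2(\mu_t^h)$-norms of the driving fields. Consequently $(\mu^h)_h$ is equibounded and equi-Lipschitz in $W_2$ on $[0,T]$, so by the refined Arzel\`a--Ascoli theorem (Proposition 3.3.1 of \cite{ambrosio2008gradient}) a subsequence converges uniformly to some $\bar\mu^\tau\in AC_2(0,T;\mathcal P_2(TM))$.

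\emph{Passing to the limit; parts (i)--(iv).} Iterating the inverse shears writes $\mu_t^h=f_t^h\mathcal L^{2D}$ with $f_t^h$ obtained from $f_0$; using the uniform velocity bound to control how far the free steps move supports and the $\tau^{-1}$-weighted kicks move velocities, one gets $h$-uniform upper bounds for $f_t^h$ on balls and, by tracking preimages, lower bounds (the latter depending on $\tau$), which pass to the limit by testing against nonnegative functions — giving (ii), (iii), (iv) and, in particular, $\bar\varrho_t:=\pi^1_\#\bar\mu^\tau_t\ll\mathcal L^D$, so that ${\bf t}_{\bar\varrho_t}^{\bar\varrho_t^\tau}$ is meaningful. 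Now $W_2$-convergence of $\varrho_{[t/h]h}^h$ to $\bar\varrho_t$ together with Lemma~\ref{le:moreau-fisher2} (stability of the Moreau--Yosida minimizer and of optimal plans under narrow convergence, applied with $\phi=\mathcal F$) yields $(\varrho_{[t/h]h}^h)^\tau\to\bar\varrho_t^\tau$ in $W_2$ and ${\bf t}_{\varrho^h}^{(\varrho^h)^\tau}\to{\bf t}_{\bar\varrho_t}^{\bar\varrho_t^\tau}$ along the curve; combined with the standard homogenization of the piecewise field $(2v,0)\leftrightarrow(0,-2g)$ against the strongly convergent $\mu^h$, and with Lemma~\ref{le:oct29.2016.1}, the limiting continuity equation is exactly $\partial_t\bar\mu^\tau+\nabla\cdot\bigl(\bar\mu^\tau\,J\nabla_\mu\mathcal H_\tau(\bar\mu^\tau)\bigr)=0$ on $(0,T)\times TM$ (the field being already tangent). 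This proves (i).

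\emph{Part (v) and the main obstacle.} For (v), since $\bar\mu^\tau$ stays absolutely continuous, $\mathcal H_\tau$ is differentiable along it with gradient ${\bf H}(t,\cdot)\in T_{\bar\mu^\tau_t}\mathcal P_2(TM)$ by Lemma~\ref{le:oct29.2016.1}; the chain rule in the Wasserstein space — which applies here because $\mathcal H_\tau$ splits as the $\lambda$-geodesically convex part $M_2^1+\mathcal V$ plus the geodesically semiconcave part $\Phi_\tau$ (Lemma~\ref{le:moreau-convex}(i)), cf.~\cite{ambrosio2008hamiltonian} — gives
\begin{equation*}
\frac{d}{dt}\mathcal H_\tau(\bar\mu^\tau_t)=\int_{TM}\bigl\langle\nabla_\mu\mathcal H_\tau(\bar\mu^\tau_t),\,\pi_{\bar\mu^\tau_t}\bigl(J\nabla_\mu\mathcal H_\tau(\bar\mu^\tau_t)\bigr)\bigr\rangle\,d\bar\mu^\tau_t=\int_{TM}\bigl\langle\nabla_\mu\mathcal H_\tau,\,J\nabla_\mu\mathcal H_\tau\bigr\rangle\,d\bar\mu^\tau_t=0,
\end{equation*}
the projection being dropped because $\nabla_\mu\mathcal H_\tau\in T_{\bar\mu^\tau_t}\mathcal P_2$ and the last integrand vanishing pointwise since $J$ is antisymmetric; integrating in $t$ gives $\mathcal H_\tau(\bar\mu^\tau_t)=\mathcal H_\tau(\mu_0)$. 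I expect the main obstacle to be the limit passage in the non-local term ${\bf t}_{\varrho^h}^{(\varrho^h)^\tau}$: one must simultaneously upgrade narrow to $W_2$ convergence of the first marginals along the curve, keep the limit absolutely continuous so that the optimal maps exist, and invoke the Moreau--Yosida stability of Lemma~\ref{le:moreau-fisher2} — three interlocked facts — whereas the $L^\infty$ and positivity bookkeeping behind (ii)--(iv), though fiddly, is routine given the explicit shear structure of the scheme.
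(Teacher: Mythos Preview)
Your route is genuinely different from the paper's. The paper does not build an operator-splitting scheme at all: it simply verifies the abstract hypotheses (H1)--(H2) of \cite{ambrosio2008hamiltonian} for $\mathcal H_\tau$ and then invokes that paper directly to obtain (i)--(iv). (H1) is the linear growth bound, which follows at once from the fact that ${\bf t}_\varrho^{\varrho^\tau}$ takes values in the compact torus; (H2) is stability of $\nabla_\mu\mathcal H_\tau$ under narrow convergence of absolutely continuous measures, which the paper proves by combining Lemma~\ref{le:moreau-fisher2} with an $L^p$-compactness argument for the convex Brenier potentials on the bounded set $M$. Your scheme is more self-contained and avoids the black box, at the price of having to carry out the compactness and limit identification by hand.

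Two points deserve care in your sketch. First, when you write that Lemma~\ref{le:moreau-fisher2} ``yields ${\bf t}_{\varrho^h}^{(\varrho^h)^\tau}\to{\bf t}_{\bar\varrho_t}^{\bar\varrho_t^\tau}$'', that lemma only gives narrow convergence of the optimal \emph{plans}; upgrading to convergence of the optimal \emph{maps} requires the extra argument the paper spells out (boundedness of the gradients of the convex potentials on the compact torus, hence $L^p$-precompactness, hence convergence to the unique optimal map). Second, calling the passage from the alternating fields $(2v,0)\leftrightarrow(0,-2g_{\varrho^h_{kh}})$ to the limit field $(v,-g_{\bar\varrho_t})$ ``standard homogenization'' understates the work: the kick depends nonlocally on a frozen marginal that differs from the evolving one, so you need both the consistency of the splitting and the stability of $g_\varrho$ in $\varrho$---which is precisely the (H2)-type estimate above. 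For (v) your chain-rule argument is essentially what the paper does, though note that \cite{ambrosio2008hamiltonian} states the conservation under $\lambda$-\emph{convexity}, whereas $\mathcal H_\tau$ is only $\lambda$-\emph{concave}; the paper therefore redoes the chain rule directly rather than citing it.
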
 

\proof{} {\bf 1.} Let $\mu_0 \in \mathcal P_2^r(TM)$ and set $\varrho_0:= \pi^1_\# \mu_0.$ Similarly, for any arbitrary $\mu \in \mathcal P_2^r(TM)$ we set $\varrho:= \pi^1_\# \mu.$ Recall that ${\bf t}_\varrho^{\varrho^\tau}$ is the optimal map that pushes $\varrho$ forward to $\varrho^\tau$. Since ${\bf t}_\varrho^{\varrho^\tau}: M \rightarrow M$ and $M$ is a bounded set, Lemma \ref{le:oct29.2016.1} supplies us with a constant $C$ depending on $\tau$, but independent of $\mu$,  such that 
\begin{equation}\label{eq:oct28.2016.2} 
\Bigl|   \nabla_\mu \mathcal H_\tau(\mu)(x,v) \Bigr| \leq C(|(x,v)|+1),  \quad \forall (x,v) \in TM.
\end{equation} 
This is referred to as assumption (H1) in \cite{ambrosio2008hamiltonian}. 

Assume $(\mu_n)_n \subset \mathcal P_2(TM)$ is a sequence of absolutely continuous measures which converges narrowly to   $\mu\ll \mathcal L^{2d}$. Then $(\mu_n)_n$ is bounded in $\mathcal P_2(TM)$ for the Wasserstein metric and $(\varrho_n):= (\pi^1_\# \mu_n)_n$ is a sequence of absolutely continuous measures that converges narrowly to $\varrho \ll \mathcal L^d$. Let $u_n: \mathbb R^d \rightarrow \mathbb R$ be convex functions such that $x \rightarrow u(x)-|x|^2/2$ is convex, $u_n(0)=0$ and $\nabla u_n= {\bf t}_{\varrho_n}^{\varrho_n^\tau}$. By Remark \ref{re:moreau-fisher} both $J^\phi_\tau(\varrho)=\{\varrho^\tau\}$ and $J^\Phi_\tau(\mu)=\{\mu^\tau \}$ are of cardinality $1.$ By Lemma \ref{le:moreau-fisher2}, $(\varrho_n^\tau)_n$ converges to $\varrho^\tau$. Since $M$ is a compact set, $(\nabla u_n)_n$ is uniformly bounded on $M$. We use the convexity of $u_n$ to conclude that $(\nabla u_n)_n$ is pre--compact in $L^p(M)$ for any $1\leq p<\infty.$ Any point of accumulation of  $(\nabla u_n)_n$ in $L^p(M),$ ${\bf t}$, is an optimal map for the Wasserstein metric, $W_2$, among the maps that push $\varrho$ forward to $\varrho^\tau$. Since such an optimal map is unique, we conclude that the whole sequence $(\nabla u_n)_n$ converges to ${\bf t}= {\bf t}_{\varrho}^{\varrho^\tau}.$ Using the expression of $\nabla_\mu \mathcal H_\tau(\mu_n)$ provided by Lemma \ref{le:oct29.2016.1} we conclude that $\bigl(\nabla_\mu \mathcal H_\tau(\mu_n)\bigr)_n$ converges almost everywhere to $\nabla_\mu \mathcal H_\tau(\mu).$ This is referred to as assumption (H2) in \cite{ambrosio2008hamiltonian}.  By (H1) and (H2) we obtain (i)--(iv). 

{\bf 2.}  For the conservation of the Hamiltonian, \cite{ambrosio2008hamiltonian} requires the Hamiltonian to be $\lambda$--convex. We now check that $\lambda$--concavity is sufficient as well.

By Remark \ref{re:moreau-sub}, $\Phi_\tau$ is Lipschitz on bounded subsets of $\mathcal P_2(TM)$. Since $\mathcal V$ and $M^1_2$ are also Lipschitz on bounded subsets of $\mathcal P_2(TM)$, so is $\mathcal H_\tau=\Phi_\tau+M^1_2+\mathcal V$. Fix $T>0.$ Since $\mu \in AC_2\bigl(0,T; \mathcal P_2(TM)\bigr)$, we conclude that $t \rightarrow \mathcal H(\bar \mu^\tau_t)$ is Lipschitz on $[0,T]$. To show that $\mathcal H(\bar \mu^\tau_t)$ is time independent, it suffices to show that its derivative vanishes almost everywhere. 

 Let $W$ be the velocity of minimal norm for the path $t \rightarrow \bar \mu^\tau_t$ provided by Theorem 8.3.1 \cite{ambrosio2008gradient}. Since both $W$ and  $J \nabla_\mu \mathcal H(\bar \mu^\tau)$ are velocities for $t \rightarrow \bar \mu^\tau_t$, we have 
\[
\nabla \cdot \Bigl(W- J \nabla_\mu \mathcal H(\bar \mu^\tau)\Bigr)=0 \qquad \mathcal D'\Bigl((0,T) \times TM \Bigr).
\] 
In other words 
\[
\int_0^T dt \int_{TM} \langle W- J \nabla_\mu \mathcal H(\bar \mu^\tau_t); \nabla F \rangle \bar \mu^\tau_t(dx, dv) =0 \qquad \forall \, F\in C_0^1\bigl( (0,T) \times TM \bigr) .
\] 
Choosing $F$ in the form $F(t,x,v)= A(t) B(x,v)$ and using a density argument, we conclude that for almost every $t \in (0,T)$ we have 
\[
\int_{TM} \langle W- J \nabla_\mu \mathcal H(\bar \mu^\tau_t); \nabla B \rangle \bar \mu^\tau_t (dx, dv) =0 \qquad \forall \, B\in C_0^1(TM) 
\] 
Thus, for almost every $t \in (0,T)$, $W_t$ is the orthogonal projection of $J \nabla_\mu \mathcal H(\bar \mu^\tau_t)$ onto the tangent space $T_{\bar \mu^\tau_t} \mathcal P_2(TM):$  
\[
W_t:= \pi_{\bar \mu^\tau_t} \Bigl( J \nabla_\mu \mathcal H(\bar \mu^\tau_t)\Bigr).
\] 
By (8.4.6) \cite{ambrosio2008gradient}, for almost every $t \in (0,T)$, if $t+h \in (0,T)$ and $G_h\in \Gamma_{o}(\bar \mu^\tau_t, \bar \mu^\tau_{t+h})$, then we have the following convergence in the $W_2$--metric: 
\begin{equation}\label{eq:jan06.2016.1}
\lim_{h \rightarrow 0} \Bigl( \bar \pi^1, {\bar \pi^2- \bar \pi^1 \over h} \Bigr)_\# G_h= ({\bf id} \times W_t)_\# \bar \mu^\tau_t.
\end{equation}
Here, 
\[
\bar \pi^1(w,z)=w, \quad \bar \pi^2(w,z)=z \quad \forall \, w:=(x,v), z:=(a, b) \in TM.
\] 
Denote by $|(\bar \mu^\tau_t)'|$ the metric derivative of $t \rightarrow \bar \mu^\tau_t$ (cf. e.g. Definition 1.1.1 \cite{ambrosio2008gradient}). By definition 
\[
\lim_{h \rightarrow 0}{W_2(\bar \mu^\tau_t, \bar \mu^\tau_{t+h}) \over h}=|(\bar \mu^\tau_t)'|(t)
\] 
for almost every $t \in (0,T)$. Hence, for these $t$, 
 \begin{equation}\label{eq:jan06.2016.3}
{W_2^2(\bar \mu^\tau_t, \bar \mu^\tau_{t+h}) \over h}=o(h),
\end{equation}  
where $o(h)$ depends on $t$.  Note that by Lemma \ref{le:moreau-convex} (ii),  $\Phi_\tau$ is $\tau^{-1}$--concave. Since the second derivatives of $(x,v) \rightarrow V(x)$ and that of $(x,v) \rightarrow |v|^2$ are bounded, we conclude that  there exists a constant $\bar C_\tau$ such that $\mathcal H_\tau$ is $\bar C_\tau$--concave. Thus, 
\[
\mathcal H_\tau(\bar \mu^\tau_{t+h}) -\mathcal H_\tau(\bar \mu^\tau_t) \leq \int_{TM  \times TM} 
\langle \nabla_\mu \mathcal H(\bar \mu^\tau_t)(w); z-w \rangle G_h(dw, dz)+\bar C_\tau W_2^2(\bar \mu^\tau_t, \bar \mu^\tau_{t+h}).
\] 
If $t$ is such that (\ref{eq:jan06.2016.1}) holds, since $|\langle \nabla_\mu \mathcal H_\tau(\bar \mu^\tau_t); z-w\rangle|$ grows at most quadratically, we conclude that  
\begin{equation}\label{eq:jan06.2016.7}
\mathcal H_\tau(\bar \mu^\tau_{t+h}) -\mathcal H_\tau(\bar \mu^\tau_t) \leq \int_{TM} 
h \langle \nabla_\mu \mathcal H_\tau(\bar \mu^\tau_t)(w);W_t(w) \rangle \bar \mu^\tau_t(dw)+\bar C_\tau W_2^2(\bar \mu^\tau_t, \bar \mu^\tau_{t+h})+ o(h).
\end{equation} 
We use the fact that $W_t$ is the projection of $J\nabla_\mu \mathcal H(\bar \mu^\tau_t)$ onto $T_{\bar \mu^\tau_t} \mathcal P_2(TM)$ to conclude that 
\[
\int_{TM} \langle \nabla_\mu \mathcal H(\bar \mu^\tau_t) (w); W_t(w) \rangle\bar \mu^\tau_t(dw) = 
\int_{TM} \langle \nabla_\mu \mathcal H(\bar \mu^\tau_t) (w); J\nabla_\mu \mathcal H(\bar \mu^\tau_t)(w) \rangle \bar \mu^\tau_t(dw)=0.
\] 
This, together with (\ref{eq:jan06.2016.3}) and (\ref{eq:jan06.2016.7}), imply  
\begin{equation}\label{eq:jan06.2016.5}
\mathcal H(\bar \mu^\tau_{t+h}) -\mathcal H(\bar \mu^\tau_t) \leq o(h). 
\end{equation}
The map $t \rightarrow \mathcal H(\bar \mu^\tau_t)$ is Lipschitz on $[0,T]$. Therefore, it is differentiable almost everywhere. If $t$ is a point of differentiability, using alternatively $h>0$ and $h<0$ in (\ref{eq:jan06.2016.5}), we conclude that 
\[
{d \over ds}\mathcal H(\bar \mu^\tau_s)|_{s=t}=0.
\] 
Since the derivative of the Lispchitz function $t \rightarrow \mathcal H(\bar \mu^\tau_t)$ vanishes almost everywhere, the function must be constant. \endproof 

\begin{remark} If we replace $\mathbb T^d$ by $\mathbb R^d$ then, because of Remark \ref{re:moreau-sub}, (H1') of \cite{ambrosio2008hamiltonian} holds. \cite{ambrosio2008hamiltonian} ensures that if (H2')  also holds, then there is a solution to our Hamiltonian system. The proof of (H2') requires some effort and this is why we worked on $\mathbb T^d.$ Note that the above arguments go through if we replace $\mathbb T^d$ by any open bounded set. 
\end{remark}

%
%
%
\section{Ingredients toward a convergence analysis in the periodic setting} 
Let $\mu_0=f_0 \mathcal L^{2d} \in \mathcal P_2^r(TM)$ and let $T>0.$  For $\tau>0$ we define $t \rightarrow \bar \mu^\tau_t \in \mathcal P_2^r(TM)$ as in Theorem \ref{th:existence1}. Write
\[
\bar \mu^\tau_t = \bar f^\tau_t \mathcal L^{2d}, \quad  \pi_{1 \, \#} \bar \mu^\tau_t=\bar \varrho^\tau_t \mathcal L^d, \quad  \bar f^\tau_t(x,v)= \bar \varrho^\tau_t(x) \bar F^\tau_t(x,v), \quad \hbox{with} \quad \int_{\mathbb R^d} \bar F^\tau_t(x,v) dv=1.
\]

\subsection{Continuity equation}\label{subsec:cont}
Since $f_0 \in L^1(TM)$, we apply de la Vall\'ee Poussin Theorem to $\{f_0\},$ a compact subset of $L^1(TM)$, to conclude that there exists a super linear convex function $\theta: [0,\infty) \rightarrow [0,\infty)$ such that $\theta(f_0) \in L^1(TM).$ We use Lemma 6.2 \cite{ambrosio2008hamiltonian} to conclude that  
\begin{equation}\label{eq:jan30.2017.0}
\sup_{t \in [0,T]} \int_{TM} \theta(\bar f_t) dxdv\leq \int_{TM} \theta(\bar f_0) dxdv<\infty. 
\end{equation}
We apply again de la Vall\'ee Poussin Theorem to conclude that $\{\bar f^\tau\; | \; \tau>0 \}$ is a compact subset of $L^1\bigl((0,T) \times TM \bigr).$

Recall that since $\pi_{1 \, \#} \bar \mu^\tau_t\ll \mathcal L^d$, $J^\phi_\tau(\bar \varrho_t)$ reduces to a single element $\varrho^\tau_t \mathcal L^d.$ We have  
\begin{equation}\label{eq:jan30.2017.1}
\phi_\tau(\bar \varrho^\tau_t )= \phi(\varrho^\tau_t )+{W_2^2(\varrho^\tau_t, \bar \varrho^\tau_t)\over 2 \tau}
\end{equation}
By Theorem \ref{th:existence1} (v) 
\begin{equation}\label{eq:jan30.2017.2}
\phi_\tau (\bar \varrho^\tau_t ) + \int_{M} V(x) \bar \varrho^\tau_t (x)dx+ {1 \over 2} \int_{TM} |v|^2 \bar \mu^\tau_t(dx, dv)= \mathcal H_\tau(\mu_0) \leq \mathcal H(\mu_0).
\end{equation}
By Proposition \ref{pr:diffmoreau-fisher2} 
\begin{equation}\label{eq:jan30.2017.3}
||\nabla_\varrho \phi_\tau (\bar \varrho^\tau_t ) ||_{\bar \varrho^\tau_t }={W_2(\varrho^\tau_t, \bar \varrho^\tau_t)\over  \tau}. 
\end{equation}
This, together with (\ref{eq:jan30.2017.2}), yield 
\begin{equation}\label{eq:jan30.2017.4}
{\tau \over 2} ||\nabla_\varrho \phi_\tau (\bar \varrho^\tau_t ) ||_{\bar \varrho^\tau_t }^2+\phi(\varrho^\tau_t ) \leq \mathcal H(\mu_0) +||V||_\infty.
\end{equation}

Define 
\[
\bar {\bf u}^\tau_t(x):= \int_{\mathbb R^d} v \bar F^\tau_t(x,v) dv.
\]

We use (\ref{eq:jan30.2017.0}) to deduce that up to a subsequence, $(\bar f^\tau)_\tau$ converges weakly to some $\bar f$ in $L^1\bigl( (0,1) \times TM \bigr)$.

\begin{prop}\label{pr:kinetic} The following hold:
 \begin{enumerate} 
\item[(i)]  $ \bar \varrho^\tau \in AC_2\bigl(0,T; \mathcal P(M) \bigr).$ 
\item[(ii)] 
\[
{1 \over 2}  \int_{M} |\bar {\bf u}^\tau_t(x)|^2 \bar \varrho^\tau_t(x)dx \leq \mathcal H(\mu_0) +||V||_\infty.
\] 
\item[(iii)] 
\[
\partial_t \bar \varrho^\tau + \nabla \cdot (\bar \varrho^\tau \bar {\bf u}^\tau)=0 \qquad \mathcal D'\bigl((0,T) \times M \bigr).
\] 
\end{enumerate}  
\end{prop}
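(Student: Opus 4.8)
The plan is to obtain (ii) and (iii) directly from the kinetic equation of Theorem~\ref{th:existence1}(i) together with the conservation law of Theorem~\ref{th:existence1}(v), and then to deduce (i). The common ingredient is a uniform‑in‑time energy bound. By (\ref{eq:jan30.2017.2}) we have $\phi_\tau(\bar\varrho^\tau_t)+\int_M V\,\bar\varrho^\tau_t\,dx+\frac12\int_{TM}|v|^2\,\bar\mu^\tau_t(dx,dv)=\mathcal H_\tau(\mu_0)\le\mathcal H(\mu_0)$; since $\phi_\tau\ge 0$ by Lemma~\ref{le:moreau-convex}(ii) and $|\int_M V\,\bar\varrho^\tau_t\,dx|\le\|V\|_\infty$, this gives
\begin{equation*}
\frac12\int_{TM}|v|^2\,\bar\mu^\tau_t(dx,dv)\le\mathcal H(\mu_0)+\|V\|_\infty\qquad\text{for all }t\in[0,T].
\end{equation*}
In particular $v\mapsto v$ belongs to $L^1(\bar\mu^\tau_t)$ uniformly in $t$, so $\bar{\bf u}^\tau_t$ is well defined.

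For (ii), I would use the decomposition $\bar f^\tau_t(x,v)=\bar\varrho^\tau_t(x)\,\bar F^\tau_t(x,v)$ with $\int_{\mathbb R^d}\bar F^\tau_t(x,v)\,dv=1$ and apply Jensen's inequality in the $v$‑variable: pointwise in $x$, $|\bar{\bf u}^\tau_t(x)|^2=\big|\int_{\mathbb R^d}v\,\bar F^\tau_t(x,v)\,dv\big|^2\le\int_{\mathbb R^d}|v|^2\,\bar F^\tau_t(x,v)\,dv$; multiplying by $\bar\varrho^\tau_t(x)$ and integrating over $M$ yields $\int_M|\bar{\bf u}^\tau_t|^2\,\bar\varrho^\tau_t\,dx\le\int_{TM}|v|^2\,\bar\mu^\tau_t$, and the displayed bound closes the estimate.

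For (iii), the idea is to test the equation $\partial_t\bar\mu^\tau+\nabla\cdot\big(\bar\mu^\tau J\nabla_\mu\mathcal H_\tau(\bar\mu^\tau)\big)=0$ against functions $F(t,x,v)=A(t)\,\varphi(x)\,\chi_R(v)$, with $A\in C^1_c(0,T)$, $\varphi\in C^1(M)$, and $\chi_R\in C^\infty_c(\mathbb R^d)$ a cutoff equal to $1$ on $B_R(0)$, supported in $B_{2R}(0)$, with $|\nabla\chi_R|\le C/R$. By Lemma~\ref{le:oct29.2016.1}, $J\nabla_\mu\mathcal H_\tau(\mu)(x,v)=\big(v,\,-\nabla V(x)-\tau^{-1}({\bf t}_\varrho^{\varrho^\tau}(x)-x)\big)$, so the only contribution involving $\nabla_v$ is bounded by $\frac{C}{R}\int_{TM}(|v|+1)\,\bar\mu^\tau_t$, which tends to $0$ as $R\to\infty$ by the uniform first‑moment control. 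The remaining terms converge to $\int_0^T\!\int_{TM}\big(A'(t)\varphi(x)+A(t)\,v\cdot\nabla\varphi(x)\big)\,\bar\mu^\tau_t(dx,dv)\,dt$; integrating out $v$ via the disintegration and the definition of $\bar{\bf u}^\tau_t$ turns this into $\int_0^T\!\int_M\big(A'\varphi+A\,\bar{\bf u}^\tau_t\cdot\nabla\varphi\big)\,\bar\varrho^\tau_t\,dx\,dt$, and a standard density argument in $A$ and $\varphi$ yields $\partial_t\bar\varrho^\tau+\nabla\cdot(\bar\varrho^\tau\bar{\bf u}^\tau)=0$ in $\mathcal D'((0,T)\times M)$.

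Finally, (i) is immediate: by (iii), $\bar{\bf u}^\tau$ is a velocity field for the curve $t\mapsto\bar\varrho^\tau_t$, and by (ii) the function $t\mapsto\|\bar{\bf u}^\tau_t\|_{\bar\varrho^\tau_t}$ is essentially bounded on $(0,T)$, hence in $L^2(0,T)$; Theorem~8.3.1 of \cite{ambrosio2008gradient} (cf. (\ref{eq:AB})) then gives $W_2(\bar\varrho^\tau_s,\bar\varrho^\tau_t)\le\int_s^t\|\bar{\bf u}^\tau_r\|_{\bar\varrho^\tau_r}\,dr$, so $\bar\varrho^\tau\in AC_2(0,T;\mathcal P(M))$. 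I expect the only genuinely delicate point to be the passage to the limit $R\to\infty$ in (iii); everything else is routine, and that limit is controlled entirely by the uniform‑in‑$t$ bound on $\int_{TM}|v|^2\,\bar\mu^\tau_t$ obtained at the outset.
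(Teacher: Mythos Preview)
Your proof is correct. For (ii) and (iii) you do exactly what the paper does: Jensen's inequality in $v$ for (ii), and for (iii) you test the kinetic equation from Theorem~\ref{th:existence1}(i) against functions of $(t,x)$ only, making explicit the cutoff-in-$v$ approximation that the paper leaves implicit here (and spells out later when deriving the momentum equations). The only genuine difference is in (i): the paper simply observes that $\pi^1_\#:\mathcal P_2(TM)\to\mathcal P_2(M)$ is a $W_2$-contraction (since $\pi^1$ is $1$-Lipschitz), so $\bar\mu^\tau\in AC_2\bigl(0,T;\mathcal P_2(TM)\bigr)$ immediately yields $\bar\varrho^\tau=\pi^1_\#\bar\mu^\tau\in AC_2\bigl(0,T;\mathcal P(M)\bigr)$, with no need to first establish (ii) and (iii). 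Your route via the continuity equation and the $L^2$ bound on $\bar{\bf u}^\tau_t$ is slightly longer but has the mild bonus of exhibiting an explicit velocity field for $\bar\varrho^\tau$.
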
 
\proof{} (i) We use that $\pi_1$ is a contraction of $\bigl(\mathcal P_2(TM), W_2\bigr)$ into $\bigl(\mathcal P_2(M), W_2\bigr)$ and use the fact that $\bar \mu^\tau \in AC_2\bigl(0, T; \mathcal P_2(TM) \bigr)$ to conclude the proof of (i).

(ii) We use Jensen's inequality to deduce that 
\[
\int_{M} |\bar {\bf u}^\tau_t(x)|^2 \bar \varrho^\tau_t(x)dx  \leq \int_{TM} |v|^2 \bar \mu^\tau_t(dx, dv),
\]
which, together with  (\ref{eq:jan30.2017.2}), yield (ii). 

(iii) The differential equation in Theorem \ref{th:existence1} yields (iii). \endproof

\subsection{Convergence in $\mathcal P(M)$} The goal of this subsection is to establish some convergence results. We prove that the paths $\bar{\varrho}^{\tau_n}$ and ${\varrho}^{\tau_n}$ converge to the same limit $\bar \varrho.$ Setting 
\[
\bar \nu^{\tau_n}:= \mathcal L^1_{(0,T)} \otimes\bar{\mu}^{\tau_n}
\]
we show that for the narrow convergence topology, $\bigl(\nu^{\tau_n}\bigr)_n$ contains points of accumulation  of the form $\mathcal L^1_{(0,T)} \otimes\bar \mu_{t}$ where $\bar \varrho_{t}\mathcal{L}^{d}$ is the projection of $\bar{\mu}_{t}$ onto $M.$

\begin{prop}\label{pr:convergence1} There exists a sequence $(\tau_n)_n$ decreasing to $0$ such that the following hold:
\begin{enumerate}
\item[(i)] For any $t \in (0,T),$ $\bigl(\overline{\varrho_t}^{\tau_n} \bigr)_n$ converges in $\mathcal P(M)$ to $\bar{\varrho_t} $. 
\item[(ii)] For any $t \in (0,T),$  $\bigl(\bar{\varrho}_{t}^{\tau_n} \bigr)_n$ converges in $\mathcal P(M)$ to $\bar{\varrho_t} $
\item[(iii)]  We have $\sup_{t \in (0,T)}\phi\left(\varrho_{t}\right)<\infty.$
\item[(iv)]  $\bigl(\nu^{\tau_n}\bigr)_n$ converges narrowly on $[0,T] \times TM$  to 
some $\nu=\mathcal L^1_{(0,T)} \otimes\bar \mu_{t}$.
\item[(v)] We have  $\bar \mu_{t}\left(TM\right)=1$ for $\mathcal{L}^{1}-$ a.e. $t\in\left(0,T\right)$.
\item[(vi)] We have $\pi_{\#}^{1}\bar{\mu}_{t}=\bar \varrho_{t}\mathcal{L}^{d}$ for $\mathcal{L}^{1}-$a.e. $t \in (0,T)$.
\end{enumerate}
\end{prop}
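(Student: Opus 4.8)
The plan is to extract everything from two a priori estimates that are uniform in $t\in(0,T)$ and $\tau>0$, both stemming from the conservation of the Moreau--Yosida Hamiltonian (Theorem~\ref{th:existence1}(v)). First I would combine (\ref{eq:jan30.2017.1})--(\ref{eq:jan30.2017.2}), together with $\phi=\mathcal F\ge0$, $\Phi_\tau\ge0$ and $\mathcal H_\tau(\mu_0)\le\mathcal H(\mu_0)$, to obtain the constant $C_0:=\mathcal H(\mu_0)+\|V\|_\infty$ with
\[
\phi(\varrho^\tau_t)+\frac{W_2^2(\varrho^\tau_t,\bar\varrho^\tau_t)}{2\tau}\le C_0
\qquad\text{and}\qquad
\frac12\int_{TM}|v|^2\,\bar\mu^\tau_t(dx,dv)\le C_0
\]
for all $t$ and $\tau$. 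Plugging the bound $\|\bar {\bf u}^\tau_t\|^2_{\bar\varrho^\tau_t}\le 2C_0$ and the continuity equation from Proposition~\ref{pr:kinetic}(ii)--(iii) into (\ref{eq:AB}), the curves $t\mapsto\bar\varrho^\tau_t$ are equi-Lipschitz in $W_2$, uniformly in $\tau$.

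Next, since $M=\mathbb T^D$ is compact, $(\mathcal P(M),W_2)$ is compact and $W_2$ metrizes the narrow convergence; I would apply Arzel\`a--Ascoli to the equi-Lipschitz family $\{\bar\varrho^\tau\}_{\tau>0}\subset C([0,T];\mathcal P(M))$ to produce a sequence $\tau_n\downarrow0$ and a Lipschitz path $\bar\varrho$ with $\bar\varrho^{\tau_n}\to\bar\varrho$ uniformly; in particular $\bar\varrho^{\tau_n}_t\to\bar\varrho_t$ for every $t$, which is one of (i)--(ii). Because $W_2^2(\varrho^{\tau_n}_t,\bar\varrho^{\tau_n}_t)\le2\tau_nC_0\to0$ uniformly in $t$, the resolvents $\varrho^{\tau_n}_t$ converge to the same $\bar\varrho_t$, giving the other of (i)--(ii). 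For (iii) I would pass to the $\liminf$ in $\phi(\varrho^{\tau_n}_t)\le C_0$ using the lower semicontinuity of $\phi=\mathcal F$ for the narrow convergence (as in the proof of Lemma~\ref{lem:conv}), so that $\sup_t\phi(\bar\varrho_t)\le C_0<\infty$; en route this also forces $\bar\varrho_t\ll\mathcal L^d$ for every $t$.

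For (iv)--(vi) I would use the uniform second-moment bound. Since $x$ ranges over the compact $\mathbb T^D$, $\{\bar\mu^\tau_t\}_{t,\tau}$ is tight in $\mathcal P(TM)$, hence $\{\nu^{\tau_n}\}_n=\{\mathcal L^1_{(0,T)}\otimes\bar\mu^{\tau_n}\}_n$ is tight on $[0,T]\times TM$; after a further (unrelabelled) extraction $\nu^{\tau_n}\to\nu$ narrowly, with $\nu$ of total mass $T$. Pushing forward by $(t,x,v)\mapsto t$ identifies the first marginal of $\nu$ as the narrow limit of the constant sequence $\mathcal L^1_{(0,T)}$, hence $\nu=\mathcal L^1_{(0,T)}\otimes\bar\mu_t$ after disintegration (this is (iv)). For (v), the portmanteau inequality on the open sets $(a,b)\times B_R\subset(0,T)\times TM$, with $B_R:=\mathbb T^D\times\{|v|<R\}$, gives $\nu((a,b)\times B_R)\le\liminf_n\nu^{\tau_n}((a,b)\times B_R)\le b-a$; letting $R\to\infty$ yields $\int_a^b\bar\mu_t(TM)\,dt\le b-a$ for all $[a,b]$, so $\bar\mu_t(TM)\le1$ a.e., while $\int_0^T\bar\mu_t(TM)\,dt=\nu([0,T]\times TM)=T$ forces $\bar\mu_t(TM)=1$ a.e. For (vi) I would push $\nu^{\tau_n}$ forward by $(t,x,v)\mapsto(t,x)$, obtaining $\mathcal L^1_{(0,T)}\otimes(\bar\varrho^{\tau_n}_t\mathcal L^d)\to\mathcal L^1_{(0,T)}\otimes(\pi^1_\#\bar\mu_t)$ narrowly on $(0,T)\times M$; on the other hand, testing against products $\alpha(t)\beta(x)$ and using $\bar\varrho^{\tau_n}_t\to\bar\varrho_t$ pointwise in $t$ together with dominated convergence in $t$ gives $\mathcal L^1_{(0,T)}\otimes(\bar\varrho^{\tau_n}_t\mathcal L^d)\to\mathcal L^1_{(0,T)}\otimes(\bar\varrho_t\mathcal L^d)$. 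Uniqueness of the narrow limit and disintegration with respect to $\mathcal L^1_{(0,T)}$ then yield $\pi^1_\#\bar\mu_t=\bar\varrho_t\mathcal L^d$ for a.e.\ $t$.

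I expect the main obstacle to be the passage from the projected picture to the phase-space one: (i)--(iii) live on the compact $\mathcal P(M)$ and reduce to Arzel\`a--Ascoli plus lower semicontinuity of the Fisher information, whereas (iv)--(vi) take place on the non-compact $TM$, so one must rule out escape of mass in the velocity variable---and this is exactly where the uniform second-moment control coming from the conservation of $\mathcal H_\tau$ is indispensable. The remaining care is purely bookkeeping of the nested subsequence extractions, so that a single sequence $(\tau_n)_n$ serves all six assertions.
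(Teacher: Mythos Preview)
Your proposal is correct and follows essentially the same route as the paper: equi-Lipschitz control on $t\mapsto\bar\varrho^\tau_t$ via Proposition~\ref{pr:kinetic} and Arzel\`a--Ascoli for (i), the bound $W_2^2(\bar\varrho^\tau_t,\varrho^\tau_t)\le 2\tau C_0$ for (ii), lower semicontinuity of $\phi$ for (iii), tightness of $\{\nu^{\tau_n}\}$ from the uniform second-moment bound for (iv), and testing against functions of $t$ alone and against products $\varphi(t)\psi(x)$ for (v)--(vi). Your treatment of (v) via portmanteau on open cylinders is a slightly more cautious variant of the paper's direct test against $\varphi\in C_b([0,T])$, but both arguments are equivalent once tightness is in hand.
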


\proof{} Recall that $\mathcal{H}_{\tau} \leq \mathcal{H}$. Therefore, using Theorem \ref{th:existence1} (v) we have 
\begin{equation}\label{eq:feb23.2017.1}
\mathcal{H}_{\tau}\left(\bar{\mu}_{t}^{\tau}\right)=\mathcal{H}_{\tau}\left(\mu_{0}\right)\leq\mathcal{H}\left(\mu_{0}\right). 
\end{equation}

(i) By Proposition \ref{pr:kinetic}, $ \left|\left(\bar{\varrho}^{\tau}\right)'\right|^{2}\leq 2\mathcal{H}_{\tau}\left(\mu_{0}\right)+2\left\Vert V\right\Vert _{\infty}$ except maybe on a set of null measure. Thus, 
\[
W_{2}\left(\bar{\varrho}^{\tau}_t,\bar{\varrho}^{\tau}_s\right)\leq\int_{s}^{t}\left|\left(\bar{\varrho}^{\tau}\right)'\right|\left(l\right)dl\leq\left|t-s\right|\sqrt{2\left(\mathcal{H}\left(\mu_{0}\right)+\left\Vert V\right\Vert _{\infty}\right)}.
\]
Now we can apply the Ascoli-Arzela theorem (see Proposition 3.3.1 \cite{ambrosio2008gradient})
to get (i).

(ii) We exploit (\ref{eq:feb23.2017.1}) to get 
\begin{equation}\label{eq:feb23.2017.2}
\left\Vert V\right\Vert _{\infty}+\mathcal{H}\left(\mu_{0}\right)\geq\phi_{\tau}\left(\bar{\varrho}_{t}^{\tau}\right)=\frac{W_{2}^{2}\left(\bar{\varrho}_{t}^{\tau},\varrho_{t}^{\tau}\right)}{2\tau}+\phi\left(\varrho_{t}^{\tau}\right).
\end{equation}
Hence,
\[
W_{2}^{2}\left(\bar{\varrho}_{t}^{\tau},\varrho_{t}^{\tau}\right)\leq2\tau\left(\mathcal{H}\left(\mu_{0}\right)+\left\Vert V\right\Vert _{\infty}\right),
\]
which, together with (i), yield (ii).

(iii) We use (\ref{eq:feb23.2017.2}) and the fact that $\phi$ is lower semicontinuous for the narrow convergence to conclude that 
\[
\phi\left(\varrho_{t}\right)\leq\left\Vert V\right\Vert _{\infty}+\mathcal{H}\left(\mu_{0}\right).
\]

(iv) By (\ref{eq:jan30.2017.2})   
\begin{eqnarray}
\int_{0}^{T}dt\int_{TM}\left(1+\left|x\right|^{2}+\left|v\right|^{2}\right)\bar \mu_{t}^{\tau}\left(dx,dv\right) &= & 
\int_{0}^{T}\left(1+\int_{M}\left|x\right|^{2}\bar{\varrho}_{t}^{\tau}\left(dx\right)\right)dt+\int_{0}^{1}\int_{TM}\left|v\right|^{2}\bar \mu_{t}^{\tau}\left(dx,dv\right) \nonumber\\
&\leq & 
1+\left(\mathrm{diam}M\right)^{2}+2\left(\mathcal{H}\left(\mu_{0}\right)+\left\Vert V\right\Vert _{\infty}\right).  \nonumber\
\end{eqnarray} 
Hence, $\bigl(\bar \nu^{\tau_n}\bigr)_n$ is pre--compact for the narrow convergence. Extracting a subsequence if necessary, we obtain a Borel measure $\nu$ on $[0,1] \times TM$ such that $\bigl(\bar \nu^{\tau_n}\bigr)_n$ converges narrowly on $[0,T] \times TM$ to $\nu$. Since the projection of $\mathcal L^1_{(0,T)} \bar{\mu}_{t}^{\tau_{n}}$ onto $[0,T]$ is less than $1$, the same is true for the projection of $\nu$ (cf. e.g. Theorem 2.28 \cite{ambrosio2000functions}). This concludes the proof of (iv). 

(v) Let $\varphi\in C_{b}\left(\left[0,1\right]\right)$. Note that 
\[
\int_{0}^{1}\varphi\left(t\right)dt=  \underset{n}{\lim}\int_{0}^{1}\varphi\left(t\right)dt\int_{TM}\mu_{t}^{\tau_{n}}\left(dx,dv\right)= 
\underset{n}{\lim}\int_{[0,T] \times TM}\varphi\left(t\right)  \nu^{\tau_{n}}\left(dt, dx,dv\right).
\] 
We use (iv) to deduce that 
\[
\int_{0}^{1}\varphi\left(t\right)dt=\int_{0}^{1}\varphi\left(t\right)dt\int_{TM}\bar \mu_{t}\left(dx,dv\right).
\]
Since $t \rightarrow \int_{TM}\bar \mu_{t}\left(dx,dv\right)$ belongs to $L^{1}\left(0,1\right)$, (v) follows. 

(vi) Let $\varphi\in C_{b}\left(\left[0,1\right]\right)$ and $\psi\in C_{b}\left(M\right)$. We first use (i) and then use (v) to obtain 
\[
\int_{0}^{T}\varphi\left(t\right)dt\int_{M}\bar \varrho_{t}\left(x\right)\psi\left(x\right)dx = 
 \underset{\tau_{n}\rightarrow0}{\lim}\int_{0}^{T}\varphi\left(t\right)dt\int_{M}\bar{\varrho}_{t}^{\tau_{n}}\left(x\right)\psi\left(x\right)dx= 
 \underset{n}{\lim}\int_{[0,T] \times TM}\varphi\left(t\right)  \nu^{\tau_{n}}\left(dt, dx,dv\right).
\]
Thus by (iv), 
\[
\int_{0}^{T}\varphi\left(t\right)dt\int_{M}\bar \varrho_{t}\left(x\right)\psi\left(x\right)dx = \int_{0}^{T}\varphi\left(t\right)dt\int_{TM}\psi\left(x\right)\bar{\mu}_{t} \left(dx,dv\right),
\]
which means that 
\[
\int_{TM}\psi\left(x\right)\bar{\mu}_{t}\left(dx,dv\right)=\int_{M}\bar \varrho_{t}\left(x\right)\psi\left(x\right)dx.
\] 
Since $\psi\in C_{b}\left(M\right)$ is arbitrary, we conclude the proof of (vi). \endproof

\subsection{Momentum equations for approximate  solutions} Recall that according to Section \ref{sec:existence-approximate},  if  ${\bf t}^\tau$ is the unique gradient of a lower semicontinuous convex function such that  ${\bf t}^\tau_\# \varrho_t^\tau= \bar \varrho_t^\tau$, then 
\[
\xi^\tau:={{\bf t}^\tau - \id \over \tau} \in \ubar \partial \phi(\varrho_t^\tau) \quad \hbox{and} \quad  {\id- ({\bf t}^\tau)^{-1}  \over \tau} \in \bar \partial \phi(\bar \varrho_t^\tau)
\] 
Thus, by Proposition \ref{pr:diffmoreau-fisher2}, the Wasserstein gradient of $\phi_\tau$, $\bar \varrho_t^\tau$, and $\xi^\tau$ satisfy  the relation 
\begin{equation}\label{eq:gradient-fisher}
\xi^\tau=\nabla_\varrho \phi_\tau(\bar \varrho_t^\tau)  \circ {\bf t}^\tau.
\end{equation} 
Using  $\bar F^\tau$ as introduced at the beginning of the current section,  we define the averages 
\[
\widehat{ v \otimes v}^{\bar \mu^\tau}(t,x) = \int_{\mathbb R^d}  v \otimes v \bar F_t^\tau(x,v) dv.
\] 
\begin{definition}\label{de:weak-dist} Let $\varrho \in AC_2(0,T; \mathcal P(M))$. Moreover, let $\mu \in AC_2(0,T; \mathcal P(M))$ be such that $\varrho_t$ is the projection of $\mu_t$ on $M$ and set 
\[
\widehat{ v \otimes v}^{ \mu}:= \int_{\mathbb R^d}  v \otimes v  F_t(x,dv), 
\]
where $( F_t(x,\cdot))_x$ is the disintegration of $\mu_t.$ Assume that $\xi:(0,T) \times M \rightarrow \mathbb R^d$ is a Borel vector field such that $\xi_t \in L^2(\varrho_t)$ for $\mathcal L^1-$a.e. $t \in (0,1).$ We say that $(\varrho, u, \widehat{ v \otimes v}^{\mu}, \xi)$ satisfies the momentum equation 
\begin{equation}\label{eq:momentum}
\partial_t (\varrho u)+ \nabla \cdot \bigl(\varrho \widehat{ v \otimes v}^{\mu}\bigr) =-\varrho(\nabla V +\xi)
\end{equation}  
in the sense of distribution if 
\[
\int_0^T dt \int_M \Bigl(\partial_t A + \widehat{ v \otimes v}^{ \mu} \nabla A \Bigr)\varrho_t(dx)= \int_0^T dt \int_M \langle A; \nabla V + \xi\rangle \varrho_t(dx),
\]
for all $A \in C_c^\infty((0,T) \times M; \mathbb R^d).$
\end{definition}

\begin{remark} \label{rem:error-fisher} The following hold:
\begin{enumerate} 
\item[(i)] If $\varrho$ belongs to the appropriate Sobolev space, then it is smooth enough such that we can write the Wasserstein gradient of $\phi$ at $\varrho$ as 
\[
\nabla_\varrho \phi(\varrho)=- {1\over 2}\nabla \Bigl( {\triangle \sqrt \varrho \over \sqrt \varrho} \Bigr).
\]
Therefore (cf., e.g., \cite{markowich2010bohmian}), 
\begin{equation}\label{eq:gradient-fisher2}
\varrho \nabla_\varrho \phi(\varrho)={1\over 2} \nabla (\triangle \varrho) - {\rm div} \Bigl( \nabla \sqrt \varrho \otimes  \nabla \sqrt \varrho \Bigr). 
\end{equation} 
\item[(ii)] Since $\phi$ is the Fisher information up to a multiplicative constant and $J_\tau^\phi(\varrho_t^\tau)=\{\varrho_t^\tau\},$ by Lemma 10.1.2 \cite{ambrosio2008gradient},  
$\xi^\tau$ is in the strong subdifferential of $\phi.$ By Corollary 5.8 \cite{gianazza2009wasserstein} 
\begin{equation}\label{eq:estimate-fisher}
\sqrt  \varrho_t^\tau \in W^{2,2}(M), \qquad 
\end{equation} 
\item[(iii)] If $A\in C^1\bigl((0,T) \times M, \mathbb R^d \bigr)$, then we can apply Corollary 5.8 \cite{gianazza2009wasserstein}  to deduce that (\ref{eq:gradient-fisher2}) holds for $\varrho= \varrho^\tau$ in the sense that 
\begin{equation}\label{eq:gradient-fisher2.1}
\int_0^T dt \int_M \langle A; \xi^\tau \rangle \varrho_t^\tau(x)dx = \int_0^T dt \int_M  \Bigl( -{1\over 2} (\nabla \cdot A) \; \triangle  \varrho_t^\tau + \langle \nabla A; \nabla \sqrt  \varrho_t^\tau \otimes  \nabla \sqrt  \varrho_t^\tau \rangle \Bigr)dx.
\end{equation} 
\item[(iv)] Observe that Proposition \ref{pr:convergence1} (iii) alone ensures that, for the limiting measures, we have $\sqrt {\varrho_t^\tau} \in W^{1,2}$ and therefore, the expression on the right-hand side of (\ref{eq:gradient-fisher2.1}) continues to make sense for the limiting densities $\bar \varrho$ obtained in Proposition \ref{pr:convergence1}; it  can be written as 
\[
 \int_0^T dt \int_M  \Bigl( -{1\over 2} \triangle (\nabla \cdot A) \;   \bar \varrho_t + \langle \nabla A; \nabla \sqrt{\bar \varrho_t} \otimes  \nabla \sqrt {\bar \varrho_t} \rangle \Bigr)dx.
\] 
\end{enumerate} 
\end{remark}

\hfill\break

For any vector valued Borel field, $\xi$, on $M$ of null average, we  define the norm   
\[
||\xi||_{-1}=\sup_{A \in C_c^\infty(M; \mathbb R^d)} \biggl\{\int_M \langle A; \xi(dx) \rangle \; | \;\;\; ||\nabla A||_\infty \leq 1\biggr\}.
\]

\begin{theorem} Using the notation of Subsection \ref{subsec:cont}, the following hold: 
\begin{enumerate} 
\item[(i)] $\bigl(\bar \varrho^\tau, \bar u^\tau, \widehat{ v \otimes v}^{\mu}, \nabla_\varrho \phi_\tau(\bar \varrho^\tau) \bigr)$ satisfies the momentum equation (\ref{eq:momentum}) in the sense of distributions.
\item[(ii)] In the sense of distributions, as given by Definition \ref{de:weak-dist} and  (\ref{eq:gradient-fisher2.1}), 
\[
\partial_t (\bar \varrho^\tau \bar u^\tau)+ \nabla \cdot \bigl(\bar \varrho^\tau \widehat{ v \otimes v}^{\bar \mu^\tau}\bigr) =-\bar \varrho^\tau \nabla V + \nabla \Bigl( {1\over 2} \nabla (\triangle \varrho^\tau) - {\rm div} \bigl( \nabla \sqrt \varrho^\tau \otimes  \nabla \sqrt \varrho^\tau \bigr) \Bigr) +\vec 0^\tau,
\]
where 
\[
\vec 0^\tau:=  \bar \varrho^\tau \nabla_\varrho \phi(\bar \varrho^\tau)- \varrho^\tau \xi^\tau.
\]
\item[(iii)] Further assume that there exists a sequence $(\tau_n)_n$ decreasing to $0$ such that  for $\mathcal L^1$ a.e. $t \in (0,T)$ we have 
\begin{equation}\label{eq:lebesgued}
\lim_{n \rightarrow \infty} \phi_{\tau_n}(\bar \varrho_t^{\tau_n}) - \phi(\varrho_t^{\tau_n}) =0.
\end{equation}  
Then, for any $p \in [1,\infty)$ we have 
\[
\lim_{n \rightarrow \infty} \int_0^T ||\vec 0^{\tau_n}_t  ||_{-1}^p dt =0.
\]
\end{enumerate} 
\end{theorem}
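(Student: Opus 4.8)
The plan is to treat the three statements in increasing order of difficulty. Part (i) is essentially a bookkeeping consequence of the continuity equation from Theorem~\ref{th:existence1} combined with the definition of the velocity field. Starting from $\partial_t \bar\mu^\tau + \nabla\cdot(\bar\mu^\tau J\nabla_\mu\mathcal H_\tau(\bar\mu^\tau))=0$ and the explicit form of $J\nabla_\mu\mathcal H_\tau$ supplied by Lemma~\ref{le:oct29.2016.1}, namely $J\nabla_\mu\mathcal H_\tau(\bar\mu^\tau)(x,v)=(v,\,-\nabla V(x)-({\bf t}_\varrho^{\varrho^\tau}(x)-x)/\tau)$, I would test the continuity equation against functions of the form $F(t,x,v)=A(t,x)$ (independent of $v$) to recover the continuity equation for $\bar\varrho^\tau$ with velocity $\bar{\bf u}^\tau$ (this reproves Proposition~\ref{pr:kinetic}(iii)), and then against functions linear in $v$, $F(t,x,v)=\langle A(t,x);v\rangle$, to obtain the momentum balance. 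Here the first component $v$ of $J\nabla_\mu\mathcal H_\tau$ feeds the transport term $\nabla\cdot(\bar\varrho^\tau\widehat{v\otimes v}^{\bar\mu^\tau})$ after disintegrating $\bar\mu^\tau$, and the second component produces $-\bar\varrho^\tau(\nabla V+\nabla_\varrho\phi_\tau(\bar\varrho^\tau))$ once one recognizes, via Proposition~\ref{pr:diffmoreau-fisher2} and the relation $({\bf t}_\varrho^{\varrho^\tau}-\id)/\tau=\nabla_\varrho\phi_\tau(\bar\varrho^\tau)\circ{\bf t}^\tau$ (equation~(\ref{eq:gradient-fisher})) together with a change of variables $\varrho^\tau\leftrightarrow\bar\varrho^\tau$, that the barycentric drift term integrated against $\bar\varrho^\tau$ equals the action of $\nabla_\varrho\phi_\tau(\bar\varrho^\tau)$. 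This is exactly the form prescribed in Definition~\ref{de:weak-dist}, so (i) follows.

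For part (ii) I would rewrite the term $\bar\varrho^\tau\nabla_\varrho\phi_\tau(\bar\varrho^\tau)$ appearing in (i). The point is that $\phi$ is $1/8$ the Fisher information and $J_\tau^\phi(\varrho_t^\tau)=\{\varrho_t^\tau\}$, so by Lemma~10.1.2 and Corollary~5.8 of \cite{gianazza2009wasserstein} (Remark~\ref{rem:error-fisher}(ii)--(iii)) one has $\sqrt{\varrho_t^\tau}\in W^{2,2}(M)$ and the identity (\ref{eq:gradient-fisher2.1}) holds, which is the weak/distributional version of $\varrho^\tau\nabla_\varrho\phi(\varrho^\tau)=\tfrac12\nabla(\triangle\varrho^\tau)-\operatorname{div}(\nabla\sqrt{\varrho^\tau}\otimes\nabla\sqrt{\varrho^\tau})$. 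I would then simply add and subtract $\varrho^\tau\xi^\tau$: writing $\bar\varrho^\tau\nabla_\varrho\phi_\tau(\bar\varrho^\tau)=\varrho^\tau\xi^\tau+\big(\bar\varrho^\tau\nabla_\varrho\phi(\bar\varrho^\tau)-\varrho^\tau\xi^\tau\big)$ (noting $\xi^\tau=\nabla_\varrho\phi_\tau(\bar\varrho^\tau)\circ{\bf t}^\tau$ pushes forward correctly, so the first summand, tested against $A$, is $\int\langle A;\xi^\tau\rangle\varrho^\tau\,dx$ and may be replaced by the right-hand side of (\ref{eq:gradient-fisher2.1})), one obtains precisely the stated momentum equation with error term $\vec 0^\tau=\bar\varrho^\tau\nabla_\varrho\phi(\bar\varrho^\tau)-\varrho^\tau\xi^\tau$. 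A small care point is that $\nabla_\varrho\phi(\bar\varrho^\tau)$ need only be interpreted in the strong-subdifferential / distributional sense afforded by Remark~\ref{rem:error-fisher}(iv); I would state (ii) with that understanding, as the theorem already does.

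Part (iii) is the analytic heart and the main obstacle. I would estimate $\|\vec 0^{\tau_n}_t\|_{-1}$ by unwinding the definitions: for $A\in C_c^\infty(M;\mathbb R^d)$ with $\|\nabla A\|_\infty\le1$,
\[
\int_M\langle A;\vec 0^{\tau_n}_t\rangle\,dx=\int_M\langle A;\nabla_\varrho\phi(\bar\varrho_t^{\tau_n})\rangle\bar\varrho_t^{\tau_n}\,dx-\int_M\langle A;\xi^{\tau_n}_t\rangle\varrho_t^{\tau_n}\,dx,
\]
and since $\xi^{\tau_n}_t=\nabla_\varrho\phi_{\tau_n}(\bar\varrho_t^{\tau_n})\circ{\bf t}^{\tau_n}$ with $({\bf t}^{\tau_n})_\#\varrho_t^{\tau_n}=\bar\varrho_t^{\tau_n}$, the second integral equals $\int_M\langle A\circ({\bf t}^{\tau_n})^{-1};\nabla_\varrho\phi_{\tau_n}(\bar\varrho_t^{\tau_n})\rangle\bar\varrho_t^{\tau_n}\,dx$. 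Using that $\nabla_\varrho\phi(\bar\varrho^{\tau_n})$ is the element of minimal norm in the subdifferential while $\nabla_\varrho\phi_{\tau_n}(\bar\varrho^{\tau_n})$ is small — precisely, $\|\nabla_\varrho\phi_{\tau_n}(\bar\varrho_t^{\tau_n})\|^2_{\bar\varrho_t^{\tau_n}}=W_2^2(\varrho_t^{\tau_n},\bar\varrho_t^{\tau_n})/\tau_n^2=\tfrac{2}{\tau_n}\big(\phi_{\tau_n}(\bar\varrho_t^{\tau_n})-\phi(\varrho_t^{\tau_n})\big)$ by (\ref{eq:jan30.2017.1}) and (\ref{eq:jan30.2017.3}) — I would bound the whole expression by Cauchy--Schwarz and the fact that $\|\id-({\bf t}^{\tau_n})^{-1}\|$ is controlled by $W_2(\varrho_t^{\tau_n},\bar\varrho_t^{\tau_n})$, getting a pointwise-in-$t$ bound of the form $\|\vec 0^{\tau_n}_t\|_{-1}\le C\,\delta_n(t)^{1/2}$ where $\delta_n(t):=\phi_{\tau_n}(\bar\varrho_t^{\tau_n})-\phi(\varrho_t^{\tau_n})\ge0$. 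Hypothesis (\ref{eq:lebesgued}) says $\delta_n(t)\to0$ a.e.; meanwhile $\int_0^T\delta_n(t)\,dt$ (or rather $\int_0^T\|\nabla_\varrho\phi_{\tau_n}(\bar\varrho^{\tau_n}_t)\|^2_{\bar\varrho^{\tau_n}_t}\,dt$, up to the factor $\tau_n/2$) is uniformly bounded by the energy estimate (\ref{eq:jan30.2017.4}), and $\bar\varrho^{\tau_n}$ is uniformly bounded in $\mathcal P_2(M)$ so the $\|\cdot\|_{-1}$-norms have a uniform $L^\infty_t$ bound via (\ref{eq:feb23.2017.2}); hence $(\|\vec 0^{\tau_n}_t\|_{-1})_n$ is dominated by a fixed constant in $t$ and converges to $0$ a.e. in $t$, so dominated convergence gives $\int_0^T\|\vec 0^{\tau_n}_t\|_{-1}^p\,dt\to0$ for every $p\in[1,\infty)$. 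The delicate step — and the one I would write out most carefully — is verifying that the duality pairing controlling $\|\vec 0^{\tau_n}\|_{-1}$ can indeed be estimated in terms of $\delta_n(t)$ alone, uniformly in $t$, without losing powers of $\tau_n^{-1}$; this requires exploiting the change of variables by ${\bf t}^{\tau_n}$ so that the term $\varrho^{\tau_n}\xi^{\tau_n}$ is compared against $\bar\varrho^{\tau_n}\nabla_\varrho\phi_{\tau_n}(\bar\varrho^{\tau_n})$ on the same measure $\bar\varrho^{\tau_n}$, together with the (uniform in $\tau_n$) $W^{1,2}$-bound on $\sqrt{\bar\varrho^{\tau_n}_t}$ from Proposition~\ref{pr:convergence1}(iii) to control the $\bar\varrho^{\tau_n}\nabla_\varrho\phi(\bar\varrho^{\tau_n})$ contribution in the $\|\cdot\|_{-1}$ norm.
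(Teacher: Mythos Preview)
Your plan matches the paper's argument in substance.  For (i) the paper tests the kinetic equation against $L(t,x,v)=v_iA(t,x)$ after noting that the uniform second--moment bound (\ref{eq:jan30.2017.2}) legitimises such test functions by a standard truncation argument; this is exactly your ``functions linear in $v$'' move.  For (ii) the paper's proof is the one--line algebraic substitution you describe, invoking Remark~\ref{rem:error-fisher}(iii).

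For (iii) the paper carries out precisely the change of variables you sketch: transport $\bar\varrho^\tau\nabla_\varrho\phi_\tau(\bar\varrho^\tau)$ back to the $\varrho^\tau$ side via ${\bf t}^\tau$ so that both pieces of $\vec 0^\tau$ are weighted by $\varrho^\tau$, and bound the difference $\int_M\langle \xi^\tau;A({\bf t}^\tau)-A\rangle\varrho^\tau_t\,dx$ by Cauchy--Schwarz together with $\|\nabla A\|_\infty\le 1$.  The sharp pointwise estimate obtained is $\|\vec 0^\tau_t\|_{-1}\le 2\delta_n(t)$ rather than your $C\,\delta_n(t)^{1/2}$: the product $\|\xi^\tau\|_{\varrho^\tau_t}\cdot\|{\bf t}^\tau-\id\|_{\varrho^\tau_t}$ equals $W_2^2(\varrho^\tau_t,\bar\varrho^\tau_t)/\tau=2\delta_n(t)$, with the factors of $\tau$ cancelling.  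Uniform domination then comes from $\phi\ge 0$ and (\ref{eq:feb23.2017.2}), giving $\delta_n(t)\le\mathcal H(\mu_0)+\|V\|_\infty$, and dominated convergence finishes.

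One clarification that removes the difficulty you flag at the end: the displayed definition of $\vec 0^\tau$ in the statement reads $\bar\varrho^\tau\nabla_\varrho\phi(\bar\varrho^\tau)-\varrho^\tau\xi^\tau$, but the paper's own proof (and the derivation from (i)) uses $\bar\varrho^\tau\nabla_\varrho\phi_\tau(\bar\varrho^\tau)-\varrho^\tau\xi^\tau$.  With that reading there is no separate term $\bar\varrho^\tau\nabla_\varrho\phi(\bar\varrho^\tau)$ to control, so the $W^{1,2}$--bound from Proposition~\ref{pr:convergence1}(iii) is not needed in part (iii), and the ``delicate step'' you anticipate disappears.
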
 
\proof{} (i) By Theorem \ref{th:existence1} for any $L \in C_c^\infty\bigl((0,T) \times TM \bigr)$ we have 
\begin{equation}\label{eq:distribution1}
\int_{0}^{T}    dt \int_{TM}  (\partial_t L + \langle v; \nabla_x L\rangle)\bar \mu^\tau_t(dx, dv)=\int_{0}^{T}    dt \int_{TM} \langle \nabla_v L; V+ \nabla_\varrho \phi_\tau(\bar \varrho^\tau)\rangle \bar \mu^\tau_t(dx, dv)\rangle=0 
\end{equation} 
The uniform bound in (\ref{eq:jan30.2017.2}) implies that 
\[
\sup_{t, \tau} \int_{TM} |v|^2 \bar \mu_t^\tau(dx, dv)<\infty..
\]
Thus, if $A \in C_c^\infty((0,T) \times M)$, $B_i(v)\equiv v_i$, since $B_i$ grows slower than $|v|^2$ at infinity, by a standard approximation argument, we can use $L(t,x,v):= B_i(v) A(t,x)$ in (\ref{eq:distribution1}) and read off the proof of (i).

(ii) Applying Remark \ref{rem:error-fisher} (iii), we obtain in the sense of distributions 
\[
 \nabla \Bigl( {1\over 2} \nabla (\triangle \varrho^\tau) - {\rm div} \bigl( \nabla \sqrt \varrho^\tau \otimes  \nabla \sqrt \varrho^\tau \bigr) \Bigr) + \varrho^\tau \xi^\tau=0.
\]
This, together with (i), imply (ii).

(iii) For any $A \in C_c^\infty(M)$ such that $||\nabla A||_\infty \leq 1$, we have 
\[
\int_{M}  \langle \nabla_\varrho  \phi_\tau(\bar \varrho_t^\tau); A\rangle  \bar \varrho_t^\tau(dx)= 
\int_{M}  \langle \nabla_\varrho  \phi_\tau(\bar \varrho_t^\tau)\circ  {\bf t}^\tau; A( {\bf t}^\tau) \rangle  \varrho_t^\tau(x) dx
\]
Thus, using (\ref{eq:gradient-fisher}) we conclude that 
\[
\Bigl|\int_{M} \langle\vec 0^\tau ; A\rangle  dx \Bigr| = \Bigl| \int_{M}   \Bigl \langle  \xi^\tau ; A ( {\bf t}^\tau)- A(\id )  \Bigr\rangle \varrho_t^\tau(x) dx\Bigr| \leq 
 ||\xi^\tau||_{\varrho_t^\tau} ||{\bf t}^\tau -\id||_{\varrho_t^\tau}=    ||\xi^\tau||_{\varrho_t^\tau} W_2(\varrho_t^\tau, \bar \varrho_t^\tau)
\]
Since by Remark \ref{re:moreau-sub} (ii) 
\[
W_2(\varrho_t^\tau, \bar \varrho_t^\tau) ||\xi^\tau||_{\varrho_t^\tau} \leq {W_2^2(\varrho_t^\tau, \bar \varrho_t^\tau) \over \tau} = 2\bigl(\phi_{\tau}(\bar \varrho_t^\tau) - \phi(\varrho_t^\tau)\bigr), 
\]
we obtain  
\[
\Bigl|\int_{M} \langle\vec 0^\tau_t ; A\rangle  dx \Bigr|  \leq  2 \bigl(\phi_{\tau}(\bar \varrho_t^\tau) - \phi(\varrho_t^\tau)\bigr).
\]
Hence, 
\begin{equation}\label{eq:estimate-kin1}
|| \vec 0^\tau_t ||_{-1}  \leq  2 \bigl(\phi_{\tau}(\bar \varrho_t^\tau) - \phi(\varrho_t^\tau)\bigr).
\end{equation}  
We use the fact that $\phi \geq 0$ and (\ref{eq:feb23.2017.2})  to obtain for any $t \in (0,T)$ and $\tau \in (0,1)$   
\[
\phi_{\tau}(\bar \varrho_t^\tau) - \phi(\varrho_t^\tau) \leq \phi_{\tau}(\bar \varrho_t^\tau) \leq \mathcal H(\mu_0)+||V||_\infty.
\] 
We can use (\ref{eq:lebesgued}) and the Lebesgue dominated convergence theorem to conclude that for any $p \geq 1$ 
\[
\lim_{n \rightarrow \infty} \int_0^T || \vec 0^{\tau_n}_t||_{-1}^p dt =0. 
\]\endproof

%
%
\section{Concluding remarks} 
It is important to mention that the previous results require the initial condition to be absolutely continuous with respect to the Lebesgue measure, and therefore, the mono-kinetic case presented in the introduction is not covered. It remains an interesting question to determine if our results may be extended to an arbitrary initial measure if we consider the second method proposed in \cite{ambrosio2008hamiltonian}. 

On the other hand, the convergence analysis needs to be improved in order to verify that the limit of the approximative scheme satisfies the kinetic Bohmian equation in a weak sense. We leave it as an open question for now to investigate if the flow exchange technique introduced in \cite{matthes2009family} for the analysis of Wasserstein gradient flows may be extended to our problem, giving us the additional estimates that we need to pass to the limit in our approximative scheme.

%
%
\section*{Acknowledgments.} The research of W. Gangbo was supported by NSF grant  DMS--1160939.

%
%
\begin{bibdiv}
\begin{biblist}

\bib{ambrosio2000functions}{book}{
  title = {Functions of bounded variation and free discontinuity problems},
  author = {Ambrosio, L.},
  author = {Fusco, N.},
  author = {Pallara, D.},
  volume = {254},
  year = {2000},
  publisher = {Clarendon Press Oxford}
}

\bib{ambrosio2008hamiltonian}{article}{
  title = {Hamiltonian {ODE}s in the {W}asserstein space of probability measures},
  author = {Ambrosio, L.},
  author = {Gangbo, W.},
  journal = {Communications on Pure and Applied Mathematics},
  volume = {61},
  number = {1},
  pages = {18--53},
  year = {2008},
  publisher = {Wiley Online Library}
}

\bib{ambrosio2008gradient}{book}{
  title = {Gradient flows: in metric spaces and in the space of probability measures},
  author = {Ambrosio, L.}, 
  author = {Gigli, N.}, 
  author = {Savar{\'e}, G.},
  year = {2008},
  publisher = {Springer Science \& Business Media}
}

\bib{ambrosio2006stability}{article}{
  title = {Stability of flows associated to gradient vector fields and convergence of iterated transport maps},
  author = {Ambrosio, L.},
  author = {Lisini, S.}, 
  author = {Savar{\'e}, G.},
  journal = {manuscripta mathematica},
  volume = {121},
  number = {1},
  pages = {1--50},
  year = {2006},
  publisher = {Springer}
}

\bib{cardaliaguet2013long}{article}{
  title={Long time average of mean field games with a nonlocal coupling},
  author={Cardaliaguet, P.},
  author={Lasry, J-M},
  author={Lions, P-L},
  author={Porretta, A.},
  journal={SIAM Journal on Control and Optimization},
  volume={51},
  number={5},
  pages={3558--3591},
  year={2013},
  publisher={SIAM}
}

\bib{cazenave2003semilinear}{book}{
  title={Semilinear {S}chr{\"o}dinger equations},
  author={Cazenave, T.},
  volume={10},
  year={2003},
  publisher={American Mathematical Soc.}
}

\bib{cushing2013bohmian}{book}{
  title = {Bohmian {M}echanics and {Q}uantum {T}heory: an appraisal},
  author = {Cushing, J.},
  author = {Fine, A.},
  author = {Goldstein, S.},
  volume = {184},
  year = {2013},
  publisher = {Springer Science \& Business Media}
}

\bib{deurr2009bohmian}{book}{
  title = {Bohmian {M}echanics{:} {T}he {P}hysics and {M}athematics of {Q}uantum {T}heory},
  author = {D{\`e}urr, D.}, 
  author = {Teufel, S.},
  year = {2009},
  publisher = {Springer}
}

\bib{GangboKP}{article}{
    author = {Gangbo, W.},
    author = {Kim, H-K..},  
    author = {Pacini, T.}, 
    title = {Differential forms on Wasserstein space and infinite-dimensional Hamiltonian systems},
    journal = {Memoirs of the AMS},
  volume = {211}, 
    number= {993 (3 of 5)}
    year = {2011},
    pages = {1--77},
}

\bib{GangboNT2}{article}{
    author = {Gangbo, W.},
    author = {Nguyen, T.},  
    author = {Tudorascu, A.}, 
    title = {Hamilton--Jacobi equations in the Wasserstein space},
    journal = {Methods Appl. Anal.},
    number= {15}
    year = {2008},
    pages = {155--183},
}

\bib{gangbo2009optimal}{article}{
  title = {Optimal transport for the system of isentropic {E}uler equations},
  author = {Gangbo, W.},
  author = {Westdickenberg, M.},
  journal = {Communications in Partial Differential Equations},
  volume = {34},
  number = {9},
  pages = {1041--1073},
  year = {2009},
  publisher = {Taylor \& Francis}
}

\bib{gasser1997quantum}{article}{
  title = {Quantum hydrodynamics, {W}igner transforms, the classical limit},
  author = {Gasser, I.},
  author = {Markowich, P.},
  journal = {Asymptotic Analysis},
  volume = {14},
  number = {2},
  pages = {97--116},
  year = {1997},
  publisher = {IOS Press}
}

\bib{gianazza2009wasserstein}{article}{
  title = {The {W}asserstein gradient flow of the {F}isher information and the quantum drift--diffusion equation},
  author = {Gianazza, U.},
  author = {Savar{\'e}, G.},
  author = {Toscani, G.},
  journal = {Archive for rational mechanics and analysis},
  volume = {194},
  number = {1},
  pages = {133--220},
  year = {2009},
  publisher = {Springer}
}

\bib{lieb2001bosons}{incollection}{
  title={Bosons in a trap: A rigorous derivation of the {G}ross-{P}itaevskii energy functional},
  author={Lieb, E.},
  author={Seiringer, R.},
  author={Yngvason, J.},
  booktitle={The Stability of Matter: From Atoms to Stars},
  pages={685--697},
  year={2001},
  publisher={Springer}
}

\bib{markowich2010bohmian}{article}{
  title = {Bohmian measures and their classical limit},
  author = {Markowich, P.},
  author = {Paul, T.}, 
  author = {Sparber, C.},
  journal = {Journal of Functional Analysis},
  volume = {259},
  number = {6},
  pages = {1542--1576},
  year = {2010},
  publisher = {Elsevier}
}

\bib{markowich2012dynamics}{article}{
  title = {On the dynamics of {B}ohmian measures},
  author = {Markowich, P.},
  author = {Paul, T.}, 
  author = {Sparber, C.},
  journal = {Archive for Rational Mechanics and Analysis},
  volume = {205},
  number = {3},
  pages = {1031--1054},
  year = {2012},
  publisher = {Springer}
}

\bib{matthes2009family}{article}{
  title = {A family of nonlinear fourth order equations of gradient flow type},
  author = {Matthes, D.}, 
  author = {McCann, R.},
  author = {Savar{\'e}, G.},
  journal = {Communications in Partial Differential Equations},
  volume = {34},
  number = {11},
  pages = {1352--1397},
  year = {2009},
  publisher = {Taylor \& Francis}
}

\bib{otto2001geometry}{article}{
  title = {The geometry of dissipative evolution equations: the porous medium equation},
  author = {Otto, F.},
  year = {2001},
  publisher = {Taylor \& Francis}
}

\bib{parthasarathy1967probability}{book}{
  title = {Probability measures on metric spaces},
  author = {Parthasarathy, K.},
  volume = {352},
  year = {1967},
  publisher = {American Mathematical Soc.}
}

\bib{sulem1999nonlinear}{book}{
  title = {The nonlinear {S}chr{\"o}dinger equation: self-focusing and wave collapse},
  author = {Sulem, C.}, 
  author = {Sulem, P.},
  volume = {139},
  year = {1999},
  publisher = {Springer Science \& Business Media}
}

\bib{tao2006nonlinear}{book}{
  title = {Nonlinear dispersive equations: local and global analysis},
  author = {Tao, T.},
  volume = {106},
  year = {2006},
  publisher = {American Mathematical Soc.}
}

\bib{van1996weak}{book}{
  title={Weak Convergence and Empirical Processes: With Applications to Statistics},
  author={van der Vaart, A.W.},
  author={Wellner, J.},
  year={1996},
  publisher={Springer}
}

\bib{villani2003topics}{book}{
  title = {Topics in optimal transportation},
  author = {Villani, C.},
  number = {58},
  year = {2003},
  publisher = {American Mathematical Soc.}
}

\bib{santambrogio2015optimal}{book}{
  title={Optimal transport for applied mathematicians},
  author={Santambrogio, F.},
  journal={Birk{\"a}user, NY},
  year={2015},
  publisher={Springer}
}

\end{biblist}
\end{bibdiv}

\end{document}